\author{Ulf K\"uhn, Jan Steffen M\"uller}
\title[Lower bounds on $\om^2$]{Lower bounds on the arithmetic self-intersection number of the relative
dualizing sheaf on arithmetic surfaces}
\thanks{The second author was supported by DFG-grant KU
2359/2-1}
\newtheorem{thm}{Theorem}[section]
\newtheorem{prop}[thm]{Proposition}
\newtheorem{lemma}[thm]{Lemma}
\newtheorem{cor}[thm]{Corollary}
\theoremstyle{remark}
\newtheorem{rk}[thm]{Remark}
\newtheorem{defn}[thm]{Definition}
\newtheorem{ex}[thm]{Example}
\newcommand\Q{\mathbb{Q}}
\newcommand\sF{\mathcal{F}}
\newcommand\Z{\mathbb{Z}}
\newcommand\N{\mathbb{N}}
\newcommand\R{\mathbb{R}}
\newcommand\X{\mathcal{X}}
\renewcommand\L{\mathcal{L}}
\newcommand\M{\mathcal{M}}
\newcommand\K{\mathcal{K}}
\newcommand{\om}{\overline{\omega}}
\newcommand\Spec{\mathop{\rm Spec}\nolimits}
\renewcommand\O{\mathcal{O}}
\newcommand\Oa{\overline{\O}}
\newcommand{\ldot}{\,.\,}
\newcommand{\Div}{\operatorname{Div}}
\newcommand{\supp}{\operatorname{supp}}
\newcommand{\NT}{\operatorname{NT}}
\newcommand{\fp}{\mathfrak{p}}
\newcommand{\eps}{\varepsilon}
\begin{document}
 
\date{\today}
\begin{abstract}We give an explicitly computable lower bound for the arithmetic self-intersection
  number $\om^2$ of the dualizing sheaf on a large class  of arithmetic
surfaces. If some technical conditions are satisfied, then this lower bound is positive. 
In particular, these technical conditions are always satisfied for minimal arithmetic
surfaces with simple multiplicities and at least one reducible fiber, but we have also
used our techniques to obtain lower bounds for some arithmetic surfaces with non-reduced
fibers.
\end{abstract}
\maketitle
\section{Introduction}\label{s1}
Let $K$ be a number field, let $\O_K$ denote the ring of integers of $K$.
Let $X/K$ denote a smooth, projective, geometrically
irreducible curve of genus $g>1$,
let $\pi:\X\to \Spec(\O_K)$ be a proper regular model of $X$ and
let $\om=\om_\X$ denote the relative dualizing sheaf on $\X$ over $\Spec(\O_K)$, equipped
with the Arakelov metric. 
The arithmetic self-intersection $\om^2$ is one of the most fundamental objects
in arithmetic intersection theory; see for instance~\cite{Szpiro} for a discussion.
In this note we show how to effectively compute lower bounds on $\om^2$ in many
situations including, but not limited to, semistable $\X$.
To each $\Q$-divisor $D \in \Div_\Q(X)=\Div(X)\otimes_\Z\Q$ of degree one we attach in Definiton~\ref{LDDef} a
hermitian line bundle $\overline{\L_D}$.
 We show that the height $h_{\overline{\L_D}}(\, \cdot \,)$ with respect to
$\overline{\L_D}$ is closely related to the
N\'eron-Tate height induced by the embedding $j_D$ of $X$ into its Jacobian via $D$.
More precisely, we define a certain finite set $T(\X)$ of closed points on $\X$ and prove
the following result in Section~\ref{hi}, where we write $D_\X$ for the Zariski closure in
$\X$ of an irreducible divisor $D \in \Div(X)$ and extend this to $\Div_\Q(X)$ by
linearity.
\begin{thm}\label{HtNonNeg}
  Suppose that $D\in\Div_\Q(X)$ has degree one and $\supp(D_\X)\,\cap\,
  T(\X)=\emptyset$. 
  Then, if $E=\sum^d_{j=1} (P_j)$ is an irreducible
  divisor on $X$, where $P_j \in
  X(\bar{K})$, and $\supp(E_\X)\,\cap\, T(\X) = \emptyset$, we have 
  \[
    h_{\overline{\L_D}}(E) = \frac{1}{d} \sum^d_{j=1} h_{\NT}(j_D(P_j))\ge 0.
  \]
  In particular, we have $h_{\overline{\L_D}}(P)=h_\mathrm{NT}(j_D(P))$ for all $P\in X(K)$.
\end{thm}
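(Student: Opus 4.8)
The plan is to derive the identity from the arithmetic index theorem of Faltings and Hriljac, which computes the Arakelov self-intersection of a degree-zero divisor in terms of the N\'eron--Tate height of its class in the Jacobian of $X$. I would begin by reducing to a single rational point. By construction the height $h_{\overline{\L_D}}(E)$ is $\tfrac{1}{d}$ times the arithmetic degree of the hermitian line bundle $\overline{\L_D}|_{E_\X}$ on the horizontal arithmetic curve $E_\X$, and after replacing $K$ by a finite extension over which all the $P_j$ are rational this equals the average $\tfrac{1}{d}\sum_{j=1}^d h_{\overline{\L_D}}(P_j)$, whose summands are permuted transitively by the Galois group and hence coincide. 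Since both sides of the assertion and the condition involving $T(\X)$ are unchanged under this base change, it suffices to show
\[
  h_{\overline{\L_D}}(P)=h_{\NT}(j_D(P))
\]
for a rational point $P\in X(K)$ with $\supp((P)_\X)\cap T(\X)=\emptyset$; this also settles the last sentence of the theorem, which is the case $d=1$.

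Next I would unwind Definition~\ref{LDDef} and recognise the resulting arithmetic intersection number as Hriljac's. Let $A\mapsto\widehat A$ denote the admissible (Arakelov) arithmetic extension of a degree-zero divisor $A\in\Div_\Q(X)$: its finite part is $A_\X$ together with the unique $\Q$-vertical divisor making the sum orthogonal to every vertical prime divisor, and its archimedean part is built from the canonical Green's functions on the Riemann surfaces $X_\sigma$. The key step is the identity
\[
  h_{\overline{\L_D}}(P)=-\bigl(\,\widehat{(P)-D}\cdot\widehat{(P)-D}\,\bigr)_{\mathrm{Ar}},
\]
obtained by decomposing $\overline{\L_D}$ and $\widehat{(P)-D}$ into horizontal, vertical and archimedean parts and using bilinearity of the arithmetic pairing together with the orthogonality built into $A\mapsto\widehat A$. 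Two things must be checked: that the vertical correction term in $\overline{\L_D}$ agrees, fibre by fibre, with the one appearing in $\widehat{(P)-D}$, and that the archimedean metric of $\overline{\L_D}$ is the admissible one, so that the Green's-function contributions match. The first of these is exactly where the hypotheses $\supp(D_\X)\cap T(\X)=\emptyset$ and $\supp((P)_\X)\cap T(\X)=\emptyset$ are used: $T(\X)$ consists of the finitely many closed points on the special fibres at which the admissible vertical corrections are not controlled, and avoiding it forces $D_\X$ and $(P)_\X$ to meet each special fibre in the expected component with the expected multiplicity.

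It then remains to invoke the theorem of Faltings and Hriljac, which gives $\bigl(\widehat A\cdot\widehat A\bigr)_{\mathrm{Ar}}=-\,h_{\NT}([A])$ for every degree-zero $A\in\Div_\Q(X)$, with $[A]$ its class in the Jacobian. Applying this to $A=(P)-D$ and using $[(P)-D]=j_D(P)$ yields $h_{\overline{\L_D}}(P)=h_{\NT}(j_D(P))$, and the inequality is automatic because the N\'eron--Tate height is a positive semidefinite quadratic form; summing over $j$ and dividing by $d$ finishes the proof.

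The step I expect to be the main obstacle is the second one. Because $D$ and $(P)$ each have degree one rather than zero, Hriljac's theorem does not apply to them individually; one must isolate the degree-zero difference $(P)-D$ and keep careful track of the leftover vertical and archimedean terms, verifying that the vertical correction encoded in $\overline{\L_D}$ genuinely reproduces the Arakelov-admissible one on every fibre disjoint from $T(\X)$. This bookkeeping --- and the description of $T(\X)$ that makes it work --- is the technical core of the argument; once it is in place, the appeal to the index theorem is formal.
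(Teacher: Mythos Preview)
Your overall strategy---reduce to Faltings--Hriljac applied to the degree-zero divisor $(P)-D$---is exactly the paper's, and your target identity $h_{\overline{\L_D}}(P)=-\bigl(\widehat{(P)-D}\bigr)^2$ is correct. But your description of the ``main obstacle'' misidentifies what actually has to be checked, and your account of $T(\X)$ is not right.

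You write that one must verify ``that the vertical correction term in $\overline{\L_D}$ agrees, fibre by fibre, with the one appearing in $\widehat{(P)-D}$''. This is not what happens: the vertical part of $\overline{\L_D}$ is $-U_D$ together with the constant $a=\Oa(D_\X)^2-\O(V_D)^2$, and these depend only on $D$, whereas the vertical part of $\widehat{(P)-D}$ is $\Phi_\X((P)-D)$, which depends on $P$. They are \emph{not} the same divisor. The bridge between them is the intersection identity of Proposition~\ref{udhor},
\[
  (P_\X\ldot U_D)\;=\;\O(V_D)^2-\O\bigl(\Phi_\X((P)-D)\bigr)^2,
\]
which is precisely why $U_D$ was defined via the quantities $\gamma_{D,i}=\tfrac{1}{d}\bigl(V_D^2-(V_D-dV_i)^2\bigr)$. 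Plugging this into $(\overline{\L_D}\ldot\Oa(P_\X))$ and using Arakelov adjunction $\Oa(P_\X)^2=-(\om\ldot\Oa(P_\X))$ is what collapses everything to $-\Oa((P)_\X-D_\X)^2+\O(\Phi_\X((P)-D))^2$, to which Faltings--Hriljac applies. So the core computation is not ``match two vertical divisors'' but ``invoke Proposition~\ref{udhor}''.

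Your description of $T(\X)$ is also off. It is not the locus ``at which the admissible vertical corrections are not controlled'' on $\X$. By Definition~\ref{DefT} it is the image under $\varphi_{F_0}$ of the singular locus of a semistable desingularisation $\X^{F_0}$ together with the exceptional locus of $\varphi_{F_0}$. Its role, via Lemma~\ref{FiniteT}, is to guarantee $\varphi_F^*E_\X=E_{\X^F}$ and $\varphi_F^*D_\X=D_{\X^F}$ for every finite $F\supseteq F_0$. This is exactly what your ``reduction to a single rational point'' needs but does not justify: Proposition~\ref{udhor} requires the $P_j$ to be rational (each section must hit a single component), so one must pass to an extension $F$ over which $E$ splits, and then compare intersections on $\X$ with those on $\X^F$. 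Your claim that ``both sides \ldots\ are unchanged under this base change'' is precisely the content of Lemma~\ref{FiniteT} and cannot be taken for granted. The paper therefore keeps $E$ intact on $\X$, passes to $\X^F$ only to apply Faltings--Hriljac and Proposition~\ref{udhor} pointwise, and uses~\eqref{FPullBack} to transport the result back.
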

If $\X$ is semistable, then $T(\X)$ is simply the set of singular points on the special
fibers of $\X$.
Note that in the proof of~\cite[Theorem~5.6]{ZhangAdmissible}, Zhang proves an analogue
(with $T(\X) = \emptyset$) of
Theorem~\ref{HtNonNeg} in the language of his admissible intersection theory.
Since we want to be able to compute lower bound on $\om^2$ for non-semistable $\X$, we
cannot use the admissible theory and have to work with hermitian line bundles throughout.
In order to use Theorem~\ref{HtNonNeg} to derive a lower bound on $\om^2$, we follow
Zhang's approach from~\cite{ZhangAdmissible}.
The idea is to show that under certain conditions, we have $\overline{\L_D}^2 \ge 0$.
If these conditions are satisfied and $(2g-2)D$ is a canonical $\Q$-divisor on $X$, then 
we can relate $\overline{\L_D}^2$ to
$\om^2$ and use this inequality to obtain lower bounds on $\om^2$.
As in Zhang's theory (cf.~\cite[Theorem~6.5]{ZhangPos}),  the crucial condition on 
$\overline{\L_D}$ is relative semipositivity, where we call a
hermitian line bundle {\em relatively semipositive} if its restriction to every
  irreducible vertical divisor has nonnegative arithmetic degree (see Definition~\ref{DefPos}).
The proof of the following result is similiar to the proof
of~\cite[Theorem~6.5]{ZhangPos}, but rather more complicated.
\begin{prop}\label{LDBound}
If $\overline{\L_D}$ is relatively semipositive and $D_\X\,\cap\, T(\X)=\emptyset$,
then we have $\overline{\L_D}^2\ge 0$.
\end{prop}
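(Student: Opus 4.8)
The plan is to adapt the proof of \cite[Theorem~6.5]{ZhangPos}. Since $\overline{\L_D}$ is a hermitian $\Q$-line bundle, first fix $m\ge 1$ so that $\L_D^{\otimes m}$ is an honest line bundle and work with powers $n=mk$, $k\to\infty$; because $n\,\overline{\L_D}^2=\widehat{c}_1(\overline{\L_D})\cdot\widehat{c}_1(\overline{\L_D}^{\otimes n})$, it suffices to prove $\widehat{c}_1(\overline{\L_D})\cdot\widehat{c}_1(\overline{\L_D}^{\otimes n})\ge -\eps\,n+o(n)$ for every $\eps>0$. The idea is to represent $\widehat{c}_1(\overline{\L_D}^{\otimes n})$ by the arithmetic divisor of a carefully chosen nonzero \emph{global} section $s_n$ of $\L_D^{\otimes n}$ on $\X$ and to expand
\[
  n\,\overline{\L_D}^2
  \;=\;\widehat{\deg}\bigl(\overline{\L_D}\big|_{\operatorname{div}(s_n)}\bigr)
  \;+\;\tfrac12\int_{\X(\C)}\bigl(-\log\|s_n\|^2\bigr)\,c_1(\overline{\L_D}),
\]
where, since $s_n$ is a genuine section, $\operatorname{div}(s_n)=\sum_i m_iH_i+\sum_j n_jV_j$ with all $m_i,n_j\ge 0$, the $H_i$ horizontal primes and the $V_j$ vertical primes.

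The finite term is then nonnegative term by term. For a vertical prime $V_j$ this is precisely relative semipositivity (Definition~\ref{DefPos}). For a horizontal prime $H_i$, say the Zariski closure of an irreducible divisor $E_i$ on $X$, one has $\widehat{\deg}(\overline{\L_D}|_{H_i})=\deg(E_i)\,h_{\overline{\L_D}}(E_i)$, and Theorem~\ref{HtNonNeg} gives $h_{\overline{\L_D}}(E_i)\ge 0$ \emph{provided} $\supp((E_i)_\X)\cap T(\X)=\emptyset$; the other hypothesis needed for Theorem~\ref{HtNonNeg}, namely $D_\X\cap T(\X)=\emptyset$, is assumed. So the first thing to arrange is that the horizontal part of $\operatorname{div}(s_n)$ is supported away from $T(\X)$. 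Since $T(\X)$ is a finite set of closed points lying on finitely many special fibres, this should hold for a generic member of the relevant linear system once $n$ is large enough that the sections of $\L_D^{\otimes n}$ separate these points.

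For the archimedean term we use that, by construction (Definition~\ref{LDDef}), $c_1(\overline{\L_D})$ restricts on each archimedean fibre $X_\sigma$ to a positive measure $\mu_\sigma$ of total mass $\deg\L_{D,K}=1$; by Jensen's inequality the archimedean term is then at least $-\sum_\sigma\log\|s_n\|_{L^2(\mu_\sigma)}^2$. It therefore suffices to produce $s_n$ with $\sum_\sigma\log\|s_n\|_{L^2(\mu_\sigma)}\le -\tfrac12\,\overline{\L_D}^2\,n+o(n)$. Such an $s_n$ is obtained from Minkowski's theorem applied to the lattice $H^0(\X,\L_D^{\otimes n})$ equipped with the $L^2$-metrics at the archimedean places, together with an arithmetic Hilbert--Samuel type estimate $\widehat{\deg}\,\overline{H^0}(\X,\overline{\L_D}^{\otimes n})\ge \tfrac12\,\overline{\L_D}^2\,n^2+o(n^2)$ (from the arithmetic Riemann--Roch theorem and the fact that, for a relatively semipositive bundle with semipositive curvature, the contribution of $\widehat{h}^1$ and of the analytic torsion is $o(n^2)$), keeping in mind that $\operatorname{rk}H^0(\X,\L_D^{\otimes n})=[K:\Q](n-g+1)$. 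Combining the three estimates yields $n\,\overline{\L_D}^2\ge \tfrac12\,\overline{\L_D}^2\,n+o(n)$, hence $\overline{\L_D}^2\ge o(1)$, and letting $k\to\infty$ gives $\overline{\L_D}^2\ge 0$.

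The step I expect to be the main obstacle — and the reason the argument is more involved than \cite[Theorem~6.5]{ZhangPos} — is the first one: choosing a single section $s_n$ that is \emph{simultaneously} small in the archimedean metrics (so Minkowski applies), effective as a divisor on $\X$ (so the $m_i,n_j$ are $\ge 0$), and disjoint from $T(\X)$ along its horizontal part (so Theorem~\ref{HtNonNeg} applies to each $H_i$). Smallness essentially pins $s_n$ down, so the disjointness has to be achieved by a moving argument carried out among the small sections rather than over the full linear system; this is delicate exactly when the special fibres are reducible or non-reduced, since $\L_D$ may have degree zero on some vertical components, so that $\L_D^{\otimes n}$ need not become relatively globally generated near $T(\X)$ for any $n$. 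Handling this — presumably by exploiting the local structure of $\overline{\L_D}$ near $T(\X)$ that follows from Definition~\ref{LDDef} together with $D_\X\cap T(\X)=\emptyset$, and, if necessary, by passing to a suitable vertical modification while keeping the bookkeeping of the vertical contributions under control via relative semipositivity — is the technical heart of the proof; this is also the part where the hypothesis $D_\X\cap T(\X)=\emptyset$, rather than just $\supp(D_\X)\cap T(\X)=\emptyset$, is likely to be used.
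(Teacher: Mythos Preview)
Your outline correctly identifies the decisive difficulty and then does not resolve it. The step you call ``the technical heart'' --- producing a section of $\L_D^{\otimes n}$ that is simultaneously small at the archimedean places and whose horizontal divisor avoids $T(\X)$ --- is not a matter of bookkeeping: Minkowski hands you essentially one short vector, not a linear system, so there is no room left for a moving argument inside the set of small sections. Your suggestion that generic members of $|\L_D^{\otimes n}|$ avoid $T(\X)$ is true but irrelevant, since generic members need not be small; and the alternative suggestion of ``exploiting the local structure of $\overline{\L_D}$ near $T(\X)$'' or ``passing to a vertical modification'' is not an argument. (There are also numerical slips: $\deg\L_{D,K}=2g$, not $1$, and $\operatorname{rk}H^0=[K:\Q](2gn-g+1)$ for large $n$; these are harmless but indicate the estimates were not checked.)

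The paper bypasses this obstruction entirely. It argues by contradiction: assuming $\overline{\L_D}^2<0$, it exhibits an \emph{ample} hermitian line bundle $\overline{\M}$ with the property that whenever $\overline{\M}_{a,b}^2>0$ the bundle $\overline{\M}_{a,b}=\overline{\M}^{\otimes a}\otimes\overline{\L_D}^{\otimes b}$ is again arithmetically ample (Lemma~\ref{AlphaExists}, proved by scaling the metric on a fixed ample $\overline{\M}$). For an ample hermitian bundle the whole lattice $H^0$ is spanned by strictly effective sections, so the moving lemma to avoid the finite set $T(\X)$ is trivial (Lemma~\ref{Mample}): one just takes a large even power of a sum of basis sections. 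Combining this with Theorem~\ref{HtNonNeg} and relative semipositivity via Lemma~\ref{Mlist} gives $(\overline{\L_D}\ldot\overline{\M}_{a,b})\ge 0$ whenever $\overline{\M}_{a,b}^2>0$, and a short quadratic--polynomial argument (Lemma~\ref{ImitateZhang}) shows this is incompatible with $\overline{\L_D}^2<0$. The point is that tensoring with $\overline{\M}^{\otimes a}$ trades the hopeless task of moving a single Minkowski vector for the easy task of moving inside a basis of strictly effective sections; no arithmetic Hilbert--Samuel estimate is needed.
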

In Section~\ref{Vsec} we locally define certain vertical divisors $V_D$ and $U_D$ attached to $D$;
they are the main ingredients in the construction of $\overline{\L_D}$, see
Definition~\ref{LDDef}.
Moreover, we set 
\[
        \beta_D = \frac{1-g}{g}\O\left(2V_D+U_D\right)^2 + 2(\om\ldot \O(U_D)).
\]
\begin{thm}\label{thm-main}
Let $D \in \Div_\Q(X)$ be a $\Q$-divisor such that
$(2g-2) D$ is a canonical $\Q$-divisor on $X$ and such that
$D_\X$ satisfies $D_{\X} \,\cap \,T(\X) = \emptyset$.
If the hermitian line bundle $\overline{\L_D}$ is relatively semipositive,
then we have
\[
\om^2\ge \beta_D.
\]
\end{thm}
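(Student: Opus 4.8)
The plan is to combine Proposition~\ref{LDBound}, which under the present hypotheses already gives $\overline{\L_D}^2\ge0$, with an \emph{exact} formula expressing $\om^2$ through $\overline{\L_D}^2$ and the vertical divisors $V_D,U_D$. Concretely, I would prove an identity of the shape
\[
  \om^2 \;=\; (2g-2)^2\,\overline{\L_D}^2 \;+\; \beta_D,
\]
valid whenever $(2g-2)D$ is a canonical $\Q$-divisor and $D_\X\,\cap\,T(\X)=\emptyset$; the theorem is then immediate, since relative semipositivity of $\overline{\L_D}$ forces the first summand to be nonnegative by Proposition~\ref{LDBound}. This is the hermitian–line–bundle analogue of Zhang's decomposition $\om^2=\om_a^2+\sum_v\eps_v$ combined with $\om_a^2\ge0$, with $(2g-2)\overline{\L_D}$ playing the role of $\om_a$ and $\beta_D$ collecting the local contributions.

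To obtain the displayed identity I would first unwind Definition~\ref{LDDef}. Since $\deg D=1$ the archimedean part of $\overline{\L_D}$ is normalised so that $(2g-2)\overline{\L_D}$ and $\om_\X$ carry metrics with the same curvature on every archimedean fibre, and since $(2g-2)D$ is a canonical $\Q$-divisor the two bundles agree on the generic fibre up to linear equivalence; hence $(2g-2)\overline{\L_D}\otimes\om_\X^{-1}$ is represented by a purely vertical $\Q$-divisor which, by the construction of $V_D$ and $U_D$ in Section~\ref{Vsec}, is built from $V_D$ and $U_D$ — say it equals $\O(-A_D)$ with $A_D$ the vertical divisor governed by $2V_D+U_D$. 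Expanding $\om^2=\bigl((2g-2)\overline{\L_D}-\O(A_D)\bigr)^2$ by bilinearity of the arithmetic intersection pairing, rewriting $\bigl((2g-2)\overline{\L_D}\bigr)\ldot\O(A_D)=(\om\ldot\O(A_D))+\O(A_D)^2$ by the same relation, and then using the arithmetic adjunction formula and the projection formula for vertical divisors, one arrives at
\[
  \om^2 \;=\; (2g-2)^2\,\overline{\L_D}^2 \;-\;2\bigl(\om\ldot\O(A_D)\bigr)\;-\;\O(A_D)^2.
\]
It then remains to feed in the local definitions of $V_D$ and $U_D$ — which come from inverting the intersection matrices of the components of the reducible fibres, and this is exactly where the factor $1/g$ in $\beta_D$ is produced — and to check that $-2(\om\ldot\O(A_D))-\O(A_D)^2=\frac{1-g}{g}\O(2V_D+U_D)^2+2(\om\ldot\O(U_D))=\beta_D$.

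The main obstacle is this last matching: one must identify the globally defined quantity $-2(\om\ldot\O(A_D))-\O(A_D)^2$ with the explicitly prescribed combination of $V_D$ and $U_D$, which requires careful fibre-by-fibre bookkeeping, control of the discrepancy between the Zariski closure $(2g-2)D_\X$ and $\om_\X$ on the model, and verification that no residual archimedean constant survives (either it cancels outright, or it is absorbed into the normalisation of the metric on $\overline{\L_D}$). Along the way one uses the negative semidefiniteness of the intersection form on vertical divisors and the relative semipositivity of $\om_\X$ for $g>1$, and one needs $D_\X\,\cap\,T(\X)=\emptyset$ so that $D_\X$ meets the fibre components transversally enough for the vertical corrections to behave as designed. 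Once the identity is established, Proposition~\ref{LDBound} gives $(2g-2)^2\overline{\L_D}^2\ge0$ and hence $\om^2\ge\beta_D$.
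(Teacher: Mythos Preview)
Your overall strategy is exactly the paper's: use Proposition~\ref{LDBound} to get $\overline{\L_D}^2\ge0$, and combine this with an explicit identity relating $\om^2$, $\overline{\L_D}^2$, and $\beta_D$. However, the identity you propose,
\[
  \om^2 \;=\; (2g-2)^2\,\overline{\L_D}^2 \;+\; \beta_D,
\]
is incorrect, and the error stems from a wrong normalisation. On the generic fibre, $\L_D=\omega_X\otimes\O_X(2D)$ has degree $2g$, not $1$; since $(2g-2)D$ is canonical one has $\O_X(2D)\cong\omega_X^{1/(g-1)}$ and hence $\L_D\cong\omega_X^{g/(g-1)}$. Thus $(2g-2)\L_D$ has degree $2g(2g-2)$ and does \emph{not} agree with $\omega_X$ on the generic fibre. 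The correct comparison is $\overline{\L_D}\cong\om^{\,g/(g-1)}\otimes\O(-2V_D-U_D)\otimes(\text{archimedean twist})$, and the resulting identity is
\[
  \om^2 \;=\; \frac{g-1}{g}\,\overline{\L_D}^2 \;+\; \beta_D,
\]
which is what the paper derives (equivalently, $(g-1)\overline{\L_D}^2=g\,\om^2+(g-1)\O(2V_D+U_D)^2-2g(\om\ldot\O(U_D))$). The factor $\frac{1-g}{g}$ in $\beta_D$ arises precisely from this $g/(g-1)$ exponent, not from the local linear algebra of the fibre components.

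Once this is corrected, the ``careful fibre-by-fibre bookkeeping'' you anticipate collapses to a direct computation: substitute $D_\X=\frac{1}{2g-2}\K-V_D$ into Definition~\ref{LDDef}, square, and collect terms. No appeal to negative semidefiniteness of the vertical intersection form, to relative semipositivity of $\om_\X$, or to transversality of $D_\X$ with fibre components is needed at this stage; the hypothesis $D_\X\cap T(\X)=\emptyset$ is used only to invoke Proposition~\ref{LDBound}, not in the algebraic identity itself.
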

Note that a divisor $D$ as in Theorem~\ref{thm-main} always exists.
In order to derive a nontrivial lower bound on $\om^2$ from Theorem~\ref{thm-main} for a
given $\X$, we need to show that $\overline{\L_D}$ is relatively semipositive and that for
some choice of $D$ as in the statement of the theorem, we have $\beta_D \ge 0$.
\begin{thm}\label{thm-simple}
If $\X$ is minimal and all special fibers of $\X$ are reduced,
then the following are satisfied for every divisor $D \in \Div_\Q(X)$ of degree one:
\begin{enumerate}[\upshape (i)]
    \item $\overline{\L_D}$ is relatively semipositive;
    \item $\beta: = \beta_D$ does not depend on the choice of $D$;
    \item $\beta \ge 0$, with equality if and only if all special fibers of $\X$ are irreducible.
\end{enumerate}
\end{thm}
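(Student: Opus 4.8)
The plan is to reduce everything to a fiber-by-fiber analysis. By their construction in Section~\ref{Vsec} the vertical divisors $V_D$ and $U_D$ are supported on finitely many special fibers and are defined separately on each such $\X_\p$, so $\beta_D$ decomposes as a sum $\sum_\p\beta_{D,\p}$ whose terms only involve intersection numbers on $\X_\p$, and relative semipositivity of $\overline{\L_D}$ (Definition~\ref{DefPos}) can be tested one irreducible vertical component $\Gamma\subset\X_\p$ at a time. So fix $\p$, write $\X_\p=\sum_{i=1}^r\Gamma_i$ — every multiplicity being $1$ since the fibers are reduced — and let $M_\p=(\Gamma_i\ldot\Gamma_j)_{i,j}$ be the intersection matrix. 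Two facts will be used throughout: (a) $M_\p$ is negative semidefinite and, since $\X_\p$ is connected, $\ker M_\p=\Q\cdot(1,\dots,1)$ (Zariski's lemma; reducedness is exactly what makes the kernel the all-ones vector); and (b) minimality: by Castelnuovo's criterion no $\Gamma_i$ satisfies $\Gamma_i^2=-1$ and $p_a(\Gamma_i)=0$ simultaneously, so by adjunction $\om\ldot\Gamma_i=\Gamma_i^2+2p_a(\Gamma_i)-2$ is under control (it is $\ge 0$ unless $\Gamma_i$ is rational, in which case $\Gamma_i^2\le-2$).

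For (i): a vertical $\Gamma\subset\X_\p$ meets no archimedean fiber, so the arithmetic degree $\widehat{\deg}(\overline{\L_D}|_\Gamma)$ is a vertical intersection number assembled, according to Definition~\ref{LDDef}, from $D_\X$, $V_D$ and $U_D$. I would unwind the defining linear systems for $V_D$ and $U_D$: they make the relevant vertical divisor \emph{admissible} on $\X_\p$ in Zhang's sense, i.e.\ its degrees on the $\Gamma_i$ are as uniform as $M_\p$ permits. Using (a) the common (balanced) value comes out as a nonnegative multiple of $\log\#k(\p)$ — this is where $g>1$ and $\deg D=1$ enter — and using (b) the remaining deviation, which is governed by $M_\p$, is seen not to spoil the sign. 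Together this gives $\widehat{\deg}(\overline{\L_D}|_\Gamma)\ge 0$; minimality is indispensable, since a $(-1)$-component would force a negative degree.

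For (ii): two $\Q$-divisors $D,D'$ of degree one differ by a degree-zero $\Q$-divisor, so on each $\X_\p$ the vectors $(D_\X\ldot\Gamma_i)_i$ and $(D'_\X\ldot\Gamma_i)_i$ have the same sum, namely $\deg(D)\log\#k(\p)$. Feeding this into the defining systems shows that $2V_D+U_D$ and $2V_{D'}+U_{D'}$, and likewise $U_D$ and $U_{D'}$, pair identically with every $\Gamma_i$ — their dependence on the chosen divisor is only through its degree — hence they differ by $\Q$-multiples of the full fibers $\X_\p$. Such a change leaves $\O(2V_D+U_D)^2$ untouched, because $\X_\p$ meets every vertical divisor supported on $\X_\p$ (including $\X_\p$ itself) in $0$; it changes $\om\ldot\O(U_D)$ only by a multiple of $\om\ldot\X_\p=(2g-2)\log\#k(\p)$, and rewriting $\beta_D$ directly in terms of $M_\p$ and the $\om$-degrees (equivalently, the normalization of $U_D$ in Section~\ref{Vsec}) shows that multiple is $0$. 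Hence $\beta_D=\beta_{D'}=:\beta$.

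For (iii): write $\beta=\sum_\p\beta_{D,\p}$. If $\X_\p$ is irreducible then $r=1$, both $V_D$ and $U_D$ vanish on $\X_\p$, and $\beta_{D,\p}=0$; in particular $\beta=0$ when every fiber is irreducible. If $r\ge 2$, then $2V_D+U_D$ is a vertical divisor on $\X_\p$ and $\O(2V_D+U_D)^2\le 0$ by (a), so $\tfrac{1-g}{g}\O(2V_D+U_D)^2\ge 0$ because $g>1$; the cross term $2(\om\ldot\O(U_D))$ I would handle by writing $U_D$ through the vertical discrepancy between $\om$ and $(2g-2)(D_\X+V_D)$ and completing the square against $M_\p$, which bounds it below by a negative-semidefinite quantity dominated by the first term, yielding $\beta_{D,\p}\ge 0$. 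Equality forces $2V_D+U_D$ and $U_D$ to lie in $\ker M_\p=\Q\X_\p$, hence (by the defining property, since the corrections are normalized to avoid full fibers) $V_D=U_D=0$ on $\X_\p$, i.e.\ $D_\X$ and $\om$ are already balanced on $\X_\p$, i.e.\ $r=1$. Summing over $\p$ gives $\beta\ge 0$ with equality precisely when every $\X_\p$ is irreducible. The main obstacle I expect is part (i) together with this cross-term estimate: both require turning the combinatorics of the defining systems for $V_D$ and $U_D$ into a genuine sign statement, and that is exactly where minimality (through Castelnuovo) and the negative semidefiniteness of $M_\p$ must be used in tandem rather than separately.
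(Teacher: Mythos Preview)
Your proposal has a concrete error in part (ii) and is too vague in parts (i) and (iii) to constitute a proof.

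\textbf{Part (ii).} Your claim that $U_D$ and $U_{D'}$ pair identically with every $\Gamma_i$ is false. A direct computation (this is Lemma~\ref{HDGi}(a) in the paper) gives
\[
(U_D\ldot\Gamma_i)=-\sum_{j}n_{jj}m_{ij}+2v_i(D)-\tfrac{2}{r},\qquad v_i(D)=(D_\X\ldot\Gamma_i),
\]
which visibly depends on $D$. What \emph{is} true is that $(2V_D+U_D)\ldot\Gamma_i$ is independent of $D$: the contribution $2(V_D\ldot\Gamma_i)=2a'_i-2v_i(D)$ cancels the $2v_i(D)$ above. So your argument handles the term $\O(2V_D+U_D)^2$, but not $(\om\ldot\O(U_D))$. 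The paper closes this gap by a separate computation (Lemma~\ref{udk}) showing $(\K\ldot U_D)=-\sum_i V_i^2 a_i$, which is manifestly independent of $D$; the key identity there is $\sum_i a'_i=1$, not that $U_D$ is determined modulo fibers.

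\textbf{Part (i).} Your sketch (``admissible in Zhang's sense'', ``balanced value'', ``the deviation does not spoil the sign'') does not isolate any quantity whose sign you control. The paper reduces to showing
\[
(\K\ldot\Gamma_i)+2(D_\X\ldot\Gamma_i)-(U_D\ldot\Gamma_i)\ge 0,
\]
expresses this via the Moore--Penrose pseudoinverse of the intersection matrix, and then rewrites the resulting sum in terms of effective resistances $r(\Gamma_i,\Gamma_j)$ on the metrized graph $\mathcal{G}_\X$; the decisive inequality is $r(\Gamma_i,\Gamma_j)\le -1/m_{ij}$ for adjacent vertices. This graph-theoretic step uses only reducedness, not minimality, so your assertion that ``minimality is indispensable, since a $(-1)$-component would force a negative degree'' is not how the argument runs (minimality enters only in (iii)).

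\textbf{Part (iii).} Your observation that $\tfrac{1-g}{g}(2V_D+U_D)^2\ge 0$ by negative semidefiniteness is correct and is half of the paper's argument. But ``completing the square against $M_\p$'' for the cross term is not a proof; the actual mechanism is again the identity $(\K\ldot U_D)=-\sum_i V_i^2 a_i$, combined with $V_i^2\le 0$ (Zariski) and $a_i\ge 0$ (adjunction plus minimality). Without that identity there is no reason your ``completed square'' would be dominated by the first term.
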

The proof of Theorem~\ref{thm-simple} essentially follows from a sequence of local
lemmas proved in Section~\ref{Vsec}.
We provide an explicit formula for $\beta$ in Lemma~\ref{betaformula}, making it very
simple to compute $\beta$ for a given minimal model $\X$ with reduced fibers.
As an immediate corollary we recover the following result from
\cite{ZhangAdmissible}, \cite{Moriwaki} and~\cite{Burnol}.
\begin{cor}\label{SemiPos}
    If $\X$ is semistable and minimal and has at least one reducible fiber, then there
    is an effectively computable positive lower bound on $\om^2$.
\end{cor}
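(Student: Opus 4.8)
The plan is to deduce this directly from Theorem~\ref{thm-main} and Theorem~\ref{thm-simple}, so that almost all of the work has already been done and what remains is to check that the hypotheses apply and to extract strict positivity.

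First I would record the two elementary facts that make Theorem~\ref{thm-simple} applicable: a semistable arithmetic surface has reduced special fibres, and, as noted after Theorem~\ref{HtNonNeg}, in the semistable case $T(\X)$ is precisely the finite set of singular points on the special fibres of $\X$. Thus for a minimal semistable $\X$ all hypotheses of Theorem~\ref{thm-simple} concerning reducedness of the fibres are automatically met. Next I would fix a $\Q$-divisor $D\in\Div_\Q(X)$ of degree one such that $(2g-2)D$ is a canonical $\Q$-divisor and $D_\X\cap T(\X)=\emptyset$; such a $D$ exists by the remark following Theorem~\ref{thm-main}. Concretely one takes $\tfrac{1}{2g-2}$ times a canonical divisor and moves the chosen representative within its $\Q$-rational equivalence class over $K$ so that the horizontal closure $D_\X$ avoids the finitely many points of $T(\X)$.

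With this $D$ in hand, Theorem~\ref{thm-simple}(i) shows that $\overline{\L_D}$ is relatively semipositive, so that all hypotheses of Theorem~\ref{thm-main} are satisfied and we obtain $\om^2\ge\beta_D$. By Theorem~\ref{thm-simple}(ii) the quantity $\beta:=\beta_D$ does not depend on the choice of $D$, and by Theorem~\ref{thm-simple}(iii) we have $\beta\ge 0$, with equality if and only if all special fibres of $\X$ are irreducible. Since $\X$ has at least one reducible fibre by hypothesis, this forces $\beta>0$. Finally, Lemma~\ref{betaformula} furnishes an explicit closed formula for $\beta$ in terms of the combinatorial data of the special fibres (intersection matrices and multiplicities), so the resulting bound $\om^2\ge\beta>0$ is effectively computable, which is the assertion.

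The genuinely substantive input is entirely contained in the earlier sections — namely the relative semipositivity of $\overline{\L_D}$ (Theorem~\ref{thm-simple}(i)) and the positivity statement with the sharp equality criterion (Theorem~\ref{thm-simple}(iii)), both of which rest on the local analysis of the vertical divisors $V_D$ and $U_D$ in Section~\ref{Vsec}. At the level of this corollary itself the only point requiring a little care is the choice of the canonical-type divisor $D$ with $D_\X$ disjoint from $T(\X)$; here one uses that $T(\X)$ is finite and that $D$ may be moved within its $\Q$-rational equivalence class, so that no real obstacle arises.
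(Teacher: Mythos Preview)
Your proposal is correct and follows exactly the route the paper intends: the corollary is stated as an immediate consequence of Theorems~\ref{thm-main} and~\ref{thm-simple}, and you have simply unpacked that implication (semistable $\Rightarrow$ reduced fibres, so Theorem~\ref{thm-simple} applies; choose $D$ as in Theorem~\ref{thm-main}; conclude $\om^2\ge\beta$ with $\beta>0$ by the reducible-fibre hypothesis, and effective computability via Lemma~\ref{betaformula}). There is nothing to add or correct.
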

In the semistable case lower bounds on $\om^2$ can be derived  by means of
the admissible intersection theory due to Zhang, cf.
\cite[Theorem~5.5]{ZhangAdmissible}.
This method requires the computation of admissible
Green's functions on the reduction graphs of the special fibers of $\X$
and has been employed by Abbes-Ullmo~\cite{AbbesUllmo} to find lower bounds for certain modular curves (see also Subsection~\ref{x1n}),
but such an approach does not work for non-semistable arithmetic
surfaces.
The positivity of $\om^2$ for non-semistable $\X$ with at least one reducible has been
proven by Sun in~\cite{sun}. 
However, his result is often not suitable for explicit computations of such bounds in practice,
since it requires computing a global semistable model over an extension of $K$.
We believe that for $D$ as in Theoerem~\ref{thm-main}, $\beta_D$ is a lower bound on $\om^2$ for all minimal arithmetic
surfaces, even those with non-reduced fibers.
Indeed, if we are given a minimal $\X$ having components of multiplicity $>1$,
we can still check whether Theorem~\ref{thm-main} is applicable.
As an example, we prove that the conditions of Theorem~\ref{thm-main} are satisfied for 
the minimal regular model $\sF^{\min}_p$ of the Fermat 
curve of prime exponent $p>3$ over the field of $p$-th cyclotomic numbers and that the
resulting lower bound is positive.
This does not follow from Theorem~\ref{thm-simple}, since the irreducible
components of $\sF^{\min}_p$ need not have multiplicity one.
\begin{thm}\label{thm-FpBound2}
The arithmetic self-intersection $\om^2$ of the relative dualizing sheaf on
$\sF^{\min}_p$ satisfies
\[
\om^2 \ge p \log p + \O(\log p).  
\]
\end{thm}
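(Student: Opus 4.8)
The plan is to apply Theorem~\ref{thm-main} to $\X=\sF^{\min}_p$ over $\O_K$ with $K=\Q(\zeta_p)$ and then to estimate the resulting bound $\beta_D$ as $p\to\infty$. First I would recall the geometry of the Fermat curve $F_p\subset\BP^2_K$ given by $X^p+Y^p=Z^p$: it is smooth, projective and geometrically irreducible of genus $g=(p-1)(p-2)/2$ (so $p>3$ guarantees $g>1$); by adjunction a canonical divisor is $(p-3)H$, where $H$ is a hyperplane section, a $K$-rational effective divisor of degree $p$; and $F_p$ has good reduction at every prime $\ell\neq p$, the only bad prime being the totally ramified prime $\p\mid p$, which has residue field $\F_p$, so that $\log N(\p)=\log p$ and the finite set $T(\X)$ is contained in the single special fiber $\sF^{\min}_{p,\p}$. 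All further computations rest on the known explicit description of $\sF^{\min}_{p,\p}$ as a configuration of (in general non-reduced) rational components produced by the wild ramification at $\p$.

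Second, I would take $D=\frac{1}{p}H'$, where $H'=\{aX+bY+cZ=0\}$ is a hyperplane section with coefficients $a,b,c\in K$; then $D$ has degree one and $(2g-2)D=p(p-3)\cdot\frac{1}{p}H'=(p-3)H'$ is a canonical $\Q$-divisor on $F_p$. For a suitable choice of $a,b,c$ we have $D_\X\cap T(\X)=\emptyset$, because $\sF^{\min}_{p,\p}$ has smooth points outside the finite set $T(\X)$ and $a,b,c$ can be chosen so that the geometric points of $H'$ reduce into this locus; the large automorphism group of $F_p$ may also be used here. It then remains to prove that $\overline{\L_D}$ is relatively semipositive in the sense of Definition~\ref{DefPos}, i.e.\ that its restriction to every irreducible component $C$ of $\sF^{\min}_{p,\p}$ has nonnegative arithmetic degree. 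Since $\sF^{\min}_p$ has components of multiplicity $>1$, this is not covered by Theorem~\ref{thm-simple}; instead one writes down, using the local constructions of Section~\ref{Vsec}, the vertical divisors $V_D$ and $U_D$ together with the intersection matrix of $\sF^{\min}_{p,\p}$, and verifies the inequality component by component — a finite, if delicate, linear-algebra computation.

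Third, with relative semipositivity established, Theorem~\ref{thm-main} gives $\om^2\ge\beta_D$, where $\beta_D=\frac{1-g}{g}\O(2V_D+U_D)^2+2(\om\ldot\O(U_D))$. Because $V_D$ and $U_D$ are vertical and carry the trivial metric, both $\O(2V_D+U_D)^2$ and $(\om\ldot\O(U_D))$ are supported at $\p$ and equal $\log p$ times geometric intersection numbers on $\sF^{\min}_{p,\p}$, which I would evaluate by means of the explicit formula of Lemma~\ref{betaformula}. As $p\to\infty$ we have $\frac{1-g}{g}\to-1$, while the number of components of $\sF^{\min}_{p,\p}$ and their multiplicities grow linearly in $p$, so the relevant intersection numbers are of size $O(p)$; keeping track of the leading term then yields $\beta_D=p\log p+O(\log p)$, which is the assertion.

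The step I expect to be the main obstacle is the explicit local analysis at $\p$: one must pin down the components, multiplicities and pairwise intersection numbers of $\sF^{\min}_{p,\p}$ precisely enough to (a) determine $V_D$ and $U_D$, (b) verify relative semipositivity in the presence of components of multiplicity $>1$, where the argument underlying Theorem~\ref{thm-simple} no longer applies, and (c) extract the leading term $p\log p$ and bound the error by $O(\log p)$. A secondary difficulty is to arrange that $D$ can be chosen with $D_\X\cap T(\X)=\emptyset$; this is where the symmetry of the Fermat curve enters.
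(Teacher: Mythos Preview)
Your overall strategy---apply Theorem~\ref{thm-main} to $\sF^{\min}_p$, verify relative semipositivity of $\overline{\L_D}$ by a component-by-component check, then compute $\beta_D$ and extract the asymptotic $p\log p$---is exactly the paper's approach. Two points deserve comment.

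First, your choice of $D=\tfrac{1}{p}H'$ differs from the paper's. The paper takes $D=(S_x)$ for a specific $K$-rational point $S_x$ whose closure $\mathcal{S}_x$ meets only the single component $L_x$ of the bad fiber. This choice makes the local computations of $V_D$, $U_D$ and the semipositivity check much cleaner: one has $V_D=V_x=\tfrac{1}{p}L_x$, and all the intersection numbers $(\mathcal{S}_x\ldot L_i)$ are $\delta_{i,x}$. Your hyperplane section, by contrast, has $p$ geometric points whose reductions may land on several components, so the bookkeeping would be heavier; it is not wrong, just less convenient. Also, the issue of arranging $D_\X\cap T(\X)=\emptyset$ disappears entirely with the paper's choice, since one knows explicitly where $\mathcal{S}_x$ sits.

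Second, and this is a genuine gap: you propose to evaluate $\beta_D$ ``by means of the explicit formula of Lemma~\ref{betaformula}''. That lemma assumes the special fiber is \emph{reduced}, and you have already noted that $\sF^{\min}_{p,\p}$ has components $L_{\alpha_i}$ of multiplicity two. So Lemma~\ref{betaformula} is not available here (nor is Lemma~\ref{udk}, which has the same hypothesis). The paper instead computes $V_i$ for every $i\in I$ by hand, assembles $U_D$ explicitly, and then evaluates $(2V_D+U_D)^2$ and $(\K\ldot U_D)$ directly from the intersection matrix, obtaining a closed formula for $\beta_D$ as a rational function of $p$ and $r$ times $\log p$; the asymptotic then follows by inspection of the leading coefficient. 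You should plan on the same kind of direct computation rather than invoking the reduced-fiber formula.
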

A more precise statement is provided in Theorem~\ref{FpBound}.
The paper is organized as follows: In Section~\ref{Vsec} we define the divisors $V_D$ and
$U_D$ locally and prove that they have certain properties with respect to the intersection
multitplicity.
We then switch to a global perspective in Section~\ref{shlb}, where we prove some
general results on hermitian line bundles. 
Section~\ref{hi} contains the definition of $\overline{\L_D}$ and the proof of
Theorem~\ref{HtNonNeg}.
The results of Sections~\ref{Vsec},~\ref{shlb} and~\ref{hi} are then used in
Section~\ref{Bds} to prove Proposition~\ref{LDBound} and Theorems~\ref{thm-main} and~\ref{thm-simple}.
At the end of that section, we also discuss a possible application of our results to the effective Bogomolov
conjecture.
In Section~\ref{sec:apps} we first use Theorem~\ref{thm-simple} to  prove an 
asymptotic lower bound for $\om^2$ on minimal
regular models of modular curves $X_1(N)$ for certain $N$.
Finally, we use Theorem~\ref{thm-main} to prove Theorem
\ref{thm-FpBound2}; here
we also compare the resulting lower bound to the upper bound computed by Curilla
and the first author in~\cite{CurillaKuehn}.
We would like to thank Ariyan Javanpeykar and David Holmes for a careful reading of the
manuscript and many helpful suggestions. 
We also thank Zubeyir Cinkir for helpful advice on the proof of
Lemma~\ref{HDGi} and Christian Curilla for drawing Figure~\ref{d1}. 
 
\section{Intersection properties of certain vertical divisors}\label{Vsec}
Let $\O$  be a strictly Henselian discrete valuation ring with field of fractions $K$. 
Let $\X_s$ be the special fiber of a
proper regular model $\X/\O$  of a smooth projective geometrically irreducible curve $X/K$ of genus $g>1$.
In this section we define  certain vertical divisors $V_D, U_D$ with support in the
special fiber $\X_s$ attached to $\Q$-divisors $D \in \Div_\Q(X)$  and study their properties.  
 
Suppose that, as a divisor on $\X$, the special fiber $\X_s$ is given by $\X_s = \sum^{r}_{i=1}b_i\Gamma_i$, where 
$\{\Gamma_1,\ldots,\Gamma_{r}\}$ is the set of irreducible components of $\X_s$ and the
$b_i$ are  positive integers. 
We fix a canonical divisor $\K$ on $\X$, and set
 \[
a_i = (\Gamma_i\ldot\K)
\]
for $i\in \{1,\ldots,r\}$, where $(\, \ldot\,)$ is the rational-valued intersection
multiplicity on $\X$.
Note that by the adjunction formula~\cite[Theorem~9.1.37]{LiuBook}, we have 
\[
  a_i=-\Gamma_i^2+2p_a(\Gamma_i)-2,
\]
where $p_a(\Gamma_i)$ is the arithmetic genus of $\Gamma_i$.
Given a nonzero $\Q$-divisor $D\in\Div_\Q(X)$, we denote the Zariski closure of $D$ in $\X$ by
$D_\X$.
\begin{prop}\label{Vexists}
	For every $D\in\Div_\Q(X)$ there exists a vertical $\Q$-divisor $V_D\in
        \Div_\Q(\X)$, which is 	unique up to addition of rational  multiples of $\X_s$, such that
\[
 		(D_\X+V_D\ldot\Gamma_i)=\frac{\deg(D)}{2g-2}a_i
\]
for all $i\in\{1,\ldots,r\}$. Moreover, the assignment 
\[
D \mapsto V_D \mod\, \X_s
\]
is linear in $D$.
\end{prop}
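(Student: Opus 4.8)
The plan is to reduce the existence and uniqueness of $V_D$ to a statement in linear algebra about the intersection matrix $M = \bigl((\Gamma_i \ldot \Gamma_j)\bigr)_{1\le i,j\le r}$. Write $V_D = \sum_{i=1}^r x_i \Gamma_i$ with unknowns $x_i \in \Q$; then the condition $(D_\X + V_D \ldot \Gamma_i) = \frac{\deg D}{2g-2} a_i$ becomes the linear system
\[
  M\,x = c, \qquad c_i := \frac{\deg D}{2g-2}\,a_i - (D_\X \ldot \Gamma_i).
\]
So I must show this system is solvable over $\Q$ and that its solution space is exactly $\Q\cdot(b_1,\ldots,b_r)^{t}$, since $\X_s = \sum b_i \Gamma_i$ is the combination of the $\Gamma_i$ that represents a rational multiple of the special fiber.

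The key input is the classical structure of $M$ for a fiber of a regular arithmetic surface (see~\cite{LiuBook}): $M$ is symmetric, negative semidefinite, its kernel is one-dimensional and spanned by $(b_1,\ldots,b_r)^{t}$ (because $(\X_s \ldot \Gamma_i) = 0$ for all $i$, as $\X_s$ is a fiber, hence algebraically equivalent to a multiple of any other fiber which is disjoint from $\Gamma_i$), and consequently the image of $M$ is the orthogonal complement of $\ker M$ with respect to the standard inner product, i.e.\ $\operatorname{im}(M) = \{v : \sum_i b_i v_i = 0\}$. Thus the first main step is to verify the compatibility condition $\sum_{i=1}^r b_i c_i = 0$, which is exactly what makes the system solvable. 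Unwinding, this amounts to
\[
  \sum_i b_i (D_\X \ldot \Gamma_i) = (D_\X \ldot \X_s) = \deg D,
\]
using that intersecting a horizontal divisor with the full special fiber recovers its degree on the generic fiber, together with
\[
  \sum_i b_i a_i = \sum_i b_i (\Gamma_i \ldot \K) = (\X_s \ldot \K) = \deg(\omega_{X/K}) = 2g-2,
\]
by adjunction on the generic fiber. Combining these two identities gives $\sum_i b_i c_i = \frac{\deg D}{2g-2}(2g-2) - \deg D = 0$, so $c \in \operatorname{im}(M)$ and a solution $x$ exists over $\Q$ (solvability over $\Q$ rather than $\R$ is automatic since $M$ and $c$ have rational entries). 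Uniqueness up to adding a rational multiple of $\X_s$ is then immediate: two solutions differ by an element of $\ker M = \Q\cdot(b_1,\ldots,b_r)^t$, i.e.\ by a rational multiple of $\X_s$.

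Finally, linearity of $D \mapsto V_D \bmod \X_s$ follows formally: both $D \mapsto D_\X$ (on irreducible divisors, extended by linearity, as in the excerpt) and $\deg$ are linear in $D$, so $c = c(D)$ depends linearly on $D$; since $M$ has a well-defined inverse on $\operatorname{im}(M) \to \operatorname{im}(M)$ (its Moore--Penrose inverse, say, restricted appropriately), the class of the solution $x$ in $\Q^r / \ker M$ depends linearly on $c$, hence on $D$. I expect the only mildly delicate point to be the careful justification of the two intersection-theoretic identities $\sum_i b_i(D_\X \ldot \Gamma_i) = \deg D$ and $\sum_i b_i a_i = 2g-2$, i.e.\ correctly invoking adjunction and the fact that the special fiber has trivial intersection with every vertical divisor; the linear algebra itself is then routine.
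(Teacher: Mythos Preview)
Your proof is correct and follows essentially the same approach as the paper: both reduce the claim to the non-degeneracy of the intersection pairing on $Z^1(\X_s)_\Q$ modulo the fiber, you via the explicit linear system $Mx=c$ and the compatibility condition $\sum_i b_i c_i = 0$, the paper via the abstract statement that the linear functional $E \mapsto \bigl((\tfrac{\deg D}{2g-2}\K - D_\X)\ldot E\bigr)$ is representable by a vertical cycle. Your version spells out the key identities $\sum_i b_i(D_\X\ldot\Gamma_i)=\deg D$ and $\sum_i b_i a_i = 2g-2$ that the paper leaves implicit, but the underlying argument is the same.
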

\begin{proof}  
The assignment 
\[
    E \mapsto \left( \left(\frac{\deg(D)}{2g-2}\,\K - D_\X\right)\ldot E\right) 
\]
defines a linear map on $Z^1(\X_s)_\Q=Z^1(\X_s)\, \otimes_\Z\,\Q$. By the non-degeneracy of the intersection pairing
on $Z^1(\X_s)_\Q$ modulo the entire fiber, 
this map is representable by a cycle $ V_D \in Z^1(\X_s)_\Q$.  As this  assignment is also a linear  map
in $D$, the two claims follow immediately. 
\end{proof} 
Proposition~\ref{Vexists} implies that we can extend any $\Q$-divisor
$D$ on $X$ to a $\Q$-divisor on $\X$ which satisfies the adjunction formula up to a factor
$\deg(D)$.
We can define a local pairing on
coprime divisors $E_1,E_2\in\Div_\Q(X)$ by
\begin{equation}\label{defpairing}
    [E_1,E_2]=(E_{1,\X}+V_{E_1}\ldot E_{2,\X}+V_{E_2}).
\end{equation}
\begin{cor}\label{pairing}
The pairing $[E_1,E_2]$ extends the local N\'eron pairing
(see~\cite[\S III.5]{LangArak}) to divisors of arbitrary degree. 
\end{cor}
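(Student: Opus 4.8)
The plan is to reduce Corollary~\ref{pairing} to the standard description of the local N\'eron pairing of degree-zero divisors in terms of intersection theory on a regular model, which goes back to Hriljac and Faltings and is recorded in \cite[\S III.5]{LangArak}. First I would recall that formula in the form needed here: for $E_1,E_2\in\Div^0_\Q(X)$ with $\supp(E_1)\cap\supp(E_2)=\emptyset$, the local N\'eron pairing equals, in the normalization of loc.\ cit.,
\[
  \langle E_1,E_2\rangle=(E_{1,\X}+\Phi_{E_1}\ldot E_{2,\X})=(E_{1,\X}+\Phi_{E_1}\ldot E_{2,\X}+\Phi_{E_2}),
\]
where $\Phi_E\in Z^1(\X_s)_\Q$ denotes the vertical $\Q$-divisor, unique modulo $\X_s$, determined by $(E_\X+\Phi_E\ldot\Gamma_i)=0$ for all $i$; the two expressions agree because $E_{1,\X}+\Phi_{E_1}$ is orthogonal to every vertical divisor, in particular to $\Phi_{E_2}$. (If the sign convention adopted for $\langle\cdot,\cdot\rangle$ is the opposite one, one replaces the right-hand sides by their negatives; nothing below changes.)

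The key observation is then that $V_D$ and $\Phi_D$ coincide on degree-zero divisors: for $\deg(D)=0$ the defining relation of $V_D$ in Proposition~\ref{Vexists} reads $(D_\X+V_D\ldot\Gamma_i)=0$ for all $i$, which is precisely the condition defining $\Phi_D$. Hence $V_D\equiv\Phi_D\pmod{\X_s}$; in particular, for degree-zero $D$ the class of $V_D$ modulo $\X_s$ does not depend on the auxiliary canonical divisor $\K$ fixed in Section~\ref{Vsec}.

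Comparing with the definition~\eqref{defpairing}, for coprime $E_1,E_2\in\Div^0_\Q(X)$ one then gets directly
\[
  [E_1,E_2]=(E_{1,\X}+V_{E_1}\ldot E_{2,\X}+V_{E_2})=(E_{1,\X}+\Phi_{E_1}\ldot E_{2,\X}+\Phi_{E_2})=\langle E_1,E_2\rangle ,
\]
where the middle equality is legitimate because changing $V_{E_i}$ by a multiple $c\X_s$ alters the intersection number only by $c\,(E_{1,\X}+V_{E_1}\ldot\X_s)=c\deg(E_1)=0$ (and symmetrically in the other slot), so the value is independent of the chosen representatives. Since $[\,\cdot\,,\cdot\,]$ is symmetric by symmetry of the intersection product and bilinear by the linearity of $D\mapsto V_D\pmod{\X_s}$ from Proposition~\ref{Vexists}, this exhibits $[\,\cdot\,,\cdot\,]$ as a bilinear extension of the local N\'eron pairing to coprime divisors of arbitrary degree, as claimed.

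The step I expect to require the most care is bookkeeping rather than new ideas: one must check that the intersection-theoretic formula for $\langle\cdot,\cdot\rangle$ in \cite[\S III.5]{LangArak} applies verbatim to an arbitrary — not necessarily semistable — proper regular model over the strictly Henselian base $\O$, and that its sign and normalization match the ones in force here. Should loc.\ cit.\ impose extra hypotheses, the orthogonality characterization of $\Phi_D$ (and hence the whole argument) can instead be derived directly from non-degeneracy of the intersection pairing on $Z^1(\X_s)_\Q$ modulo the full fiber, exactly as in the proof of Proposition~\ref{Vexists}.
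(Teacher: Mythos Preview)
Your argument is correct and is precisely the one the paper has in mind: the corollary is stated without proof, as an immediate consequence of Proposition~\ref{Vexists} together with the Hriljac--Faltings description of the local N\'eron pairing, and indeed the paper later records your key observation explicitly in~\eqref{phiv}, writing $\Phi_\X(D):=V_D$ for $\deg(D)=0$. One small caveat: your closing remark about bilinearity is slightly imprecise, since $D\mapsto V_D$ is only linear modulo $\X_s$ and $(\X_s\ldot E_{2,\X}+V_{E_2})=\deg(E_2)$ need not vanish for general $E_2$; this does not affect the corollary, which only asserts agreement with the N\'eron pairing on degree-zero divisors, where the ambiguity disappears exactly as you note.
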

To our knowledge, the pairing $[\cdot,\cdot]$ is the first extension of the local N\'eron
pairing to divisors of arbitrary degree that is not based on the
reduction graph as in~\cite{CRCapacity} or~\cite{ZhangAdmissible}.
\begin{cor}
Suppose that $D_l \in \Div_\Q(X)$ satisfies $(D_{l, \X}\ldot \Gamma_i) = \delta_{il}$ for all $i
\in\{1,\ldots,r\}$.
Then there exists a vertical divisor $V_l := V_{D_l}$ such that 
\begin{equation}\label{vldef}
(V_l\ldot\Gamma_i) = a'_i- \delta_{il},
\end{equation}
 where $a'_i=\frac{1}{2g-2}a_i$.
\end{cor}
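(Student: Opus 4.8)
The plan is to obtain this directly from Proposition~\ref{Vexists} applied to $D=D_l$, after first pinning down the degree of $D_l$. Since $\X_s=\sum_{i=1}^r b_i\Gamma_i$ is, as a divisor on $\X$, the pullback of the closed point of $\Spec(\O)$ and is therefore principal, intersecting the horizontal divisor $D_{l,\X}$ against it gives
\[
  \deg(D_l)=(D_{l,\X}\cdot\X_s)=\sum_{i=1}^r b_i\,(D_{l,\X}\cdot\Gamma_i)=\sum_{i=1}^r b_i\delta_{il}=b_l .
\]
So a divisor $D_l$ with the stated intersection behaviour can only exist when $\Gamma_l$ occurs with multiplicity $b_l=1$ in $\X_s$ --- which is precisely the setting in which this corollary will be applied --- and then $\deg(D_l)=1$.

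Next I would invoke Proposition~\ref{Vexists} with $D=D_l$: it produces a vertical $\Q$-divisor $V_l:=V_{D_l}$, unique up to addition of a rational multiple of $\X_s$, satisfying
\[
  (D_{l,\X}+V_l\cdot\Gamma_i)=\frac{\deg(D_l)}{2g-2}\,a_i=\frac{a_i}{2g-2}=a'_i
\]
for every $i\in\{1,\dots,r\}$, where the second equality uses $\deg(D_l)=1$ and the third is the definition $a'_i=\tfrac{1}{2g-2}a_i$. Subtracting $(D_{l,\X}\cdot\Gamma_i)=\delta_{il}$ yields $(V_l\cdot\Gamma_i)=a'_i-\delta_{il}$, i.e.\ exactly~\eqref{vldef}. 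Moreover $(\X_s\cdot\Gamma_i)=0$ for all $i$ because $\X_s$ is principal on $\X$, so~\eqref{vldef} is independent of the choice of representative of $V_l$ modulo $\X_s$.

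Since the substance is already contained in Proposition~\ref{Vexists}, there is no real obstacle here: the computation is essentially bookkeeping. The one point deserving care is the normalization, namely checking that the hypothesis $(D_{l,\X}\cdot\Gamma_i)=\delta_{il}$ forces $\deg(D_l)=1$, so that the factor $\deg(D_l)/(2g-2)$ appearing in Proposition~\ref{Vexists} collapses to $a'_i$.
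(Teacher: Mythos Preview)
Your approach---apply Proposition~\ref{Vexists} after determining $\deg(D_l)$---is exactly what the paper intends: the corollary is stated without proof as an immediate consequence of that proposition, and you have supplied the one detail (the degree computation) that the paper leaves implicit.

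One phrase needs correcting. From $\deg(D_l)=b_l$ you conclude that ``a divisor $D_l$ with the stated intersection behaviour can only exist when $b_l=1$.'' This does not follow: a $\Q$-divisor $D_l$ with $(D_{l,\X}\cdot\Gamma_i)=\delta_{il}$ exists for every $l$ (the paper's very next Proposition asserts exactly this, citing~\cite{BLR}); it simply has degree $b_l$. What your computation actually shows is that the \emph{conclusion}~\eqref{vldef} follows from Proposition~\ref{Vexists} only when $\deg(D_l)=1$, i.e.\ when $b_l=1$; for general $b_l$ the same argument yields $(V_l\cdot\Gamma_i)=b_l\,a'_i-\delta_{il}$ instead. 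So you have put your finger on a genuine implicit hypothesis in the corollary as stated, but mislocated it as a constraint on the existence of $D_l$ rather than on the validity of the stated conclusion. Your parenthetical remark that $b_l=1$ is ``precisely the setting in which this corollary will be applied'' is on target for the key use in Proposition~\ref{udhor}, where the relevant sections are $K$-rational and therefore meet components of multiplicity one.
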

Several applications will rely explicit formulas for the $V_l$.
We define a matrix $M = (m_{ij})_{i,j}$ as minus the intersection matrix of $\X_s$:
\[m_{i,j} = -(b_i\Gamma_i\ldot b_j\Gamma_j)\]
Let $M^+=(n_{ij})_{i,j}$ denote the Moore-Penrose pseudoinverse of $M$,
cf.~\cite[\S3]{CinkirPseudo}.
For fixed $l \in \{1,\ldots,r\}$ we define a vector 
\[
    c_l= (c_{l1},\ldots,c_{lr})^t = -M^+ w_l,
\]
where 
\[
    w_l = (w_{l1},\ldots,w_{lr})^t, \quad w_{lj}= b_ja'_j-\delta_{lj};
\]
here $\delta_{il}$ is the Kronecker delta.
\begin{prop}
If $l \in \{1,\ldots,r\}$, then a divisor $V_l$ satisfying~\eqref{vldef} exists.
Moreover, we have
\[
    V_l = \sum^r_{i=1}b_ic_{li}\Gamma_i.
\]
\end{prop}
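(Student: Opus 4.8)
The existence of a vertical divisor $V_l$ satisfying~\eqref{vldef} is already known --- it is the content of the corollary preceding this proposition, and it also follows from Proposition~\ref{Vexists} applied to $D_l$ --- so the only new content here is the explicit formula. My plan is to verify directly that the candidate divisor $V:=\sum_{i=1}^{r}b_ic_{li}\Gamma_i$ has the intersection number prescribed by~\eqref{vldef} with each component $\Gamma_k$; since $V_l$ is determined only modulo a rational multiple of the fiber $\X_s$ and $(\X_s\ldot\Gamma_k)=0$ for all $k$, this suffices. To set things up I would write $N=\big((\Gamma_i\ldot\Gamma_j)\big)_{i,j}$ for the intersection matrix of $\X_s$ and $B=\mathrm{diag}(b_1,\dots,b_r)$, so that $M=-BNB$, i.e.\ $N=-B^{-1}MB^{-1}$, and then recast~\eqref{vldef} as a linear system for the coefficient vector $c_l$.

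The argument hinges on two inputs, which I would record first. Since $X$ is geometrically irreducible, the special fiber $\X_s$ is connected, so by Zariski's lemma $N$ is negative semidefinite with $\ker N=\Q\cdot(b_1,\dots,b_r)^t$; as $M=B(-N)B$ with $B$ symmetric and invertible, $M$ is symmetric positive semidefinite with $\ker M=B^{-1}\ker N=\Q\cdot(1,\dots,1)^t$, and hence $MM^{+}$ is the orthogonal projection of $\Q^{r}$ onto $(\ker M)^{\perp}=\{x:\textstyle\sum_i x_i=0\}$. The second input is the compatibility relation $\sum_{j=1}^{r}w_{lj}=0$: by definition $\sum_j w_{lj}=\sum_j b_ja'_j-1$, and
\[
\sum_{j=1}^{r}b_ja'_j=\frac{1}{2g-2}\sum_{j=1}^{r}b_j(\Gamma_j\ldot\K)=\frac{1}{2g-2}(\X_s\ldot\K)=\frac{2g-2}{2g-2}=1,
\]
where $(\X_s\ldot\K)=2p_a(\X_s)-2=2g-2$ by the adjunction formula. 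Consequently $w_l\in(\ker M)^{\perp}$, and therefore $MM^{+}w_l=w_l$.

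Granting these, the verification is short. The vector $\big((V\ldot\Gamma_1),\dots,(V\ldot\Gamma_r)\big)^t$ of intersection numbers of $V$ with the components equals $NBc_l$; substituting $N=-B^{-1}MB^{-1}$ and $c_l=-M^{+}w_l$ gives
\[
NBc_l=-B^{-1}Mc_l=B^{-1}MM^{+}w_l=B^{-1}w_l,
\]
whose $k$-th coordinate is $b_k^{-1}w_{lk}$, i.e.\ exactly the value of $(V_l\ldot\Gamma_k)$ required by~\eqref{vldef}. Hence $V$ is an admissible choice of $V_l$, which is the asserted formula. I expect the one genuinely substantive point to be the compatibility step $\sum_j w_{lj}=0$ --- equivalently, that $w_l\perp\ker M$ --- since this is exactly what guarantees that $-M^{+}w_l$ is an honest solution of the relevant linear system $Mc_l=-w_l$ rather than merely a least-squares approximation; it is forced by the degree $2g-2$ of the canonical sheaf of $X$. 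Everything else is bookkeeping with the congruence $M=-BNB$ and the defining projection property $MM^{+}=M^{+}M$ of the Moore--Penrose pseudoinverse.
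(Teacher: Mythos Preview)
Your proof is correct and takes essentially the same approach as the paper's, which simply cites \cite[Corollary~9.1.10]{BLR} for the existence of $D_l$ and then invokes the defining pseudoinverse relations $MM^+M=M$ and $M^+MM^+=M^+$ without further elaboration. Your version is more explicit, correctly isolating the key fact $\sum_j w_{lj}=0$ (equivalently $w_l\perp\ker M$, so $MM^+w_l=w_l$) that makes those relations do the work.
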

\begin{proof}
It follows from~\cite[Corollary~9.1.10]{BLR} that a $\Q$-divisor $D_l$ satisfying 
$(D_{l, \X}\ldot \Gamma_i) = \delta_{il}$ exists for all $i \in\{1,\ldots,r\}$ .
The formula for $V_l$ is an immediate consequence of the relations 
\[
MM^+M=M\;\text{ and }\; M^+MM^+=M^+.
\]
\end{proof}
\begin{defn}
If $D \in \Div_\Q(X)$ has degree $d >0$, then we define a vertical $\Q$-divisor $U_D$ on $\X$ associated to $D$ as follows: 
For all $i \in \{1,\ldots,r\}$ we set
\[
\gamma_{D,i}=\frac{1}{d}\left(V_D^2 -(V_D-dV_i)\right)^2
\]
and define 
\[
        U_{D}=\sum^r_{i=1}\gamma_{D,i}\Gamma_i.
\]
\end{defn}
Our main motivation for this definition is the following formula for the intersection of
$U_D$ with horizontal divisors. It will play a crucial part in the proof of Theorem~\ref{HtNonNeg}.
If $D$ has degree~0, then we write
\begin{equation}\label{phiv}
    \Phi_\X(D) := V_D
\end{equation}
in accordance with the classical literature (see for
instance~\cite[Theorem~III.3.6]{LangArak}).
\begin{prop}\label{udhor}
Let $D \in \Div_\Q(X)$ have degree $d > 0$ and let 
 $E=\sum^e_{j=1}(P_j)$ be a nontrivial effective divisor on $X$, where
$P_j\in X(K)$. 
Then we have
\[
    d(E_\X\ldot U_D) = eV_D^2 -\sum^e_{j=1} \Phi_\X(dP_j-D)^2.
\]
Moreover, the association $D\mapsto U_D$ is linear in $D$.
\end{prop}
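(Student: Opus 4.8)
The plan is to compute the intersection number $(E_\X\ldot U_D)$ one point at a time, using that each $P_j$ specializes to a single component of $\X_s$, and to match the local contribution of $P_j$ with $\tfrac1d\big(V_D^2-\Phi_\X(dP_j-D)^2\big)$.

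First I would record the relevant behaviour of sections. By the valuative criterion of properness each $P_j\in X(K)$ extends to a section $\Spec\O\to\X$, whose image I again denote $(P_j)_\X$; since $\X$ is regular this is an effective Cartier divisor with $\O_{(P_j)_\X}\isom\O$, so $(P_j)_\X\cap\X_s=\Spec k$ and hence $((P_j)_\X\ldot\X_s)=1$. Writing $\X_s=\sum_i b_i\Gamma_i$ and noting that each $((P_j)_\X\ldot\Gamma_i)$ is a nonnegative integer, the identity $\sum_i b_i\,((P_j)_\X\ldot\Gamma_i)=1$ forces a unique index $i(j)$ with $((P_j)_\X\ldot\Gamma_{i(j)})=1$, all other intersection numbers being zero, and moreover $b_{i(j)}=1$. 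Thus $(P_j)_\X$ itself satisfies $((P_j)_\X\ldot\Gamma_k)=\delta_{k,i(j)}$ for all $k$, so the vertical divisor $V_{(P_j)}$, which by definition obeys $((P_j)_\X+V_{(P_j)}\ldot\Gamma_k)=a'_k$ (recall $\deg(P_j)=1$), has $(V_{(P_j)}\ldot\Gamma_k)=a'_k-\delta_{k,i(j)}=(V_{i(j)}\ldot\Gamma_k)$; by the uniqueness statement in Proposition~\ref{Vexists} this yields $V_{(P_j)}\equiv V_{i(j)}\pmod{\X_s}$.

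Next I would unwind the definitions, keeping in mind that $\X_s$ has zero intersection with every vertical divisor (being linearly equivalent to zero), so that both $W^2$ and $(W\ldot W')$ for vertical $W,W'$ depend only on the classes of $W,W'$ modulo $\X_s$. Since $U_D=\sum_i\gamma_{D,i}\Gamma_i$ is vertical and $(P_j)_\X$ meets only $\Gamma_{i(j)}$, there with multiplicity one, we get $(E_\X\ldot U_D)=\sum_{j=1}^e\gamma_{D,i(j)}$. Fix $j$ and write $i=i(j)$. By the linearity of $D\mapsto V_D\bmod\X_s$ from Proposition~\ref{Vexists} together with the previous paragraph,
\[
V_D-dV_i\;\equiv\;V_D-dV_{(P_j)}\;\equiv\;V_{D-d(P_j)}\;\equiv\;-\Phi_\X(dP_j-D)\pmod{\X_s},
\]
the last step because $D-d(P_j)$ has degree $0$, so $V_{D-d(P_j)}=\Phi_\X(D-d(P_j))\equiv-\Phi_\X(dP_j-D)\pmod{\X_s}$. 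Hence $(V_D-dV_i)^2=\Phi_\X(dP_j-D)^2$ and therefore $\gamma_{D,i}=\tfrac1d\big(V_D^2-(V_D-dV_i)^2\big)=\tfrac1d\big(V_D^2-\Phi_\X(dP_j-D)^2\big)$. Summing over $j$ and multiplying by $d$ gives the asserted formula. For the linearity of $D\mapsto U_D$ I would expand $\gamma_{D,i}=\tfrac1d\big(V_D^2-(V_D-dV_i)^2\big)=2(V_D\ldot V_i)-(\deg D)\,V_i^2$; here $V_i^2$ is a constant, $\deg D$ is linear in $D$, and $(V_D\ldot V_i)$ depends only on $V_D\bmod\X_s$, which is linear in $D$ by Proposition~\ref{Vexists}, so each $\gamma_{D,i}$, and hence $U_D$, is linear in $D$.

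The only step requiring genuine geometric input is the first one: that a $K$-rational point specializes to a single component of $\X_s$, of multiplicity one in $\X_s$ and meeting it with multiplicity one. Granting this, everything else is formal manipulation of the defining intersection relations for $V_D$, $V_i$ and $U_D$, used together with the fact that these relations, and all intersection numbers in sight, only see the classes of vertical divisors modulo $\X_s$.
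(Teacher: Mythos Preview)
Your proof is correct and follows essentially the same approach as the paper's. You supply more detail than the paper does---justifying carefully why a $K$-rational section meets a unique component with multiplicity one, and making explicit the identification $\Phi_\X(dP_j-D)\equiv dV_{i(j)}-V_D\pmod{\X_s}$ that the paper uses without comment in its expansion of $\Phi_\X(dP_j-D)^2$---and you spell out the linearity argument that the paper dismisses as trivial, but the structure and the computations are the same.
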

\begin{proof}
    For every $j \in \{1,\ldots,e\}$ there is an index $i_j \in \{1,\ldots,r\}$
    such that the section corresponding to $P_j$ intersects $\Gamma_{i_j}$ and
    does not intersect any other component.
Therefore 
\begin{equation}\label{ppb}
        \sum^e_{j=1}\Phi_{\X}(dP_j-D)^2=\sum^e_{j=1}
        \left(d^2V_{i_j}^2-2d(V_{i_j}\ldot V_D) +V_D^2\right)
        =-d\sum^e_{j=1}\gamma_{D,i_j}+eV_D^2.
\end{equation}
The first assertion follows from~\eqref{ppb} and
\[
        (E_\X\ldot  U_D)=\sum^e_{j=1} (P_{j,\X}\ldot  U_D)
                   =\sum^e_{j=1}\gamma_{D,i_j}.
               \]
The second assertion is trivial.
\end{proof}
We are now ready to define a local version of what will be our lower
bound on $\om^2$.
\begin{defn}
    If $D \in \Div_\Q(X)$ is a divisor of degree~1, 
    we define
    \[
        \beta_D = \frac{1-g}{g}\left(2V_D+U_D\right)^2 + 2(\K\ldot U_D).
    \]
\end{defn}
\begin{ex}\label{vuex}
Suppose that the special fiber of $\X$ consists of two irreducible components $\Gamma_1$ and $\Gamma_2$ of
multiplicity~1 and identical arithmetic genus
$p_a$ which intersect transversally in $s\ge 1$ points. 
Let $D=\frac{1}{2}D_1+\frac{1}{2}D_2$.
Then it is easy to see that we can take $V_D=0$ and
\[
    V_1 = \frac{1}{4s}\Gamma_1-\frac{1}{4s}\Gamma_2 = - V_2.
\]
This leads to
\begin{align*}
    U_{D_1} =& -\frac{1}{4s}\Gamma_1 +\frac{3}{4s}\Gamma_2 = - U_{D_2}\quad\text{and}\\
U_D = &\frac{1}{4s}\Gamma_1+\frac{1}{4s}\Gamma_2.
\end{align*}
A simple computation reveals
\[
\beta_D = \frac{1}{2s}(s+2p_a-2).
\]
\end{ex}
In order to show that (a global version of) $\beta_D$ indeed provides a non-trivial lower
bound for $\om^2$ in many situations, we first need to
prove some further intersection-theoretic properties of $U_D$.
To this end, we define a metrized graph $\mathcal{G}_{\X}$ as
follows: The vertex set of $\mathcal{G}_\X$ is given by
$\{\Gamma_1,\ldots,\Gamma_{r}\}$. There are no self-loops or multiple edges; two vertices $\Gamma_i$
and $\Gamma_j$ are connected by an edge if and only if $m_{ij}\ne 0$, in which case the
length of the edge is $-1/m_{ij}$.
We also need some facts about the matrix $M$ and its pseudoinverse $M^+$.
\begin{lemma}\label{mprops}
The following properties are satisfied:
\begin{enumerate}[(i)]
\item Both $M$ and $M^+$ are symmetric and positive semidefinite.
\item\label{Rows0} We have $\sum^r_{j=1}m_{ij}=\sum^r_{j=1}n_{ij}=0$ for all
$i\in\{1,\ldots,r\}$.
\item\label{Prod} We have 
  $\sum^r_{j=1}n_{ij}m_{jk}=-\frac{1}{r}+\delta_{kl}\textrm{  for all
  }i,k\in\{1,\ldots,r\}$.
\item\label{trace} We have 
$n_{ii}-\sum_{j,k}n_{ij}n_{kk}m_{jk}=\frac{\mathrm{Tr}(M^+)}{r}\textrm{  for all
  }i\in\{1,\ldots,r\}$.
\item $M$ is the discrete Laplacian matrix associated to
$\mathcal{G}_{\X}$.
\end{enumerate}
\end{lemma}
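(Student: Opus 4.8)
The plan is to derive all five items from two standard ingredients: Zariski's lemma on the intersection form of a fibre, and the elementary linear algebra of the Moore--Penrose pseudoinverse of a real symmetric positive semidefinite matrix.

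First I would dispose of (i), (ii) and (v), which are essentially formal. Symmetry of $M$ is symmetry of the intersection pairing. Since $\X_s=\operatorname{div}(\pi)$ for a uniformizer $\pi$ of $\O$, we have $(\Gamma_i\ldot\X_s)=0$ for every $i$, so
\[
\sum_{j=1}^r m_{ij}=-\Big(b_i\Gamma_i\ldot\sum_{j=1}^r b_j\Gamma_j\Big)=-b_i(\Gamma_i\ldot\X_s)=0,
\]
which is the first half of (ii) and says $M\mathbf 1=0$ for $\mathbf 1=(1,\dots,1)^t$. By Zariski's lemma (see e.g.\ \cite[\S9.1]{LiuBook}), the form $Z\mapsto Z^2$ on vertical $\Q$-divisors supported on $\X_s$ is negative semidefinite with kernel exactly $\Q\cdot\X_s$; writing $Z=\sum_i x_ib_i\Gamma_i$ this reads $-x^tMx\le 0$, with equality precisely when $x\in\Q\mathbf 1$, so $M$ is positive semidefinite and, since $\X_s$ is connected, $\ker M=\Q\mathbf 1$ is one-dimensional. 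For (v), for $i\ne j$ we have $m_{ij}=-b_ib_j(\Gamma_i\ldot\Gamma_j)\le 0$, which is nonzero precisely when $\Gamma_i$ and $\Gamma_j$ meet; the edge of $\mathcal G_\X$ joining them then has length $-1/m_{ij}$, so $m_{ij}$ equals minus the reciprocal of this length, which is exactly the off-diagonal $(i,j)$-entry of the weighted graph Laplacian of $\mathcal G_\X$, and the diagonal entries agree because $m_{ii}=-\sum_{j\ne i}m_{ij}$ by (ii).

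I would then bring in $M^+$ through the spectral decomposition $M=\sum_{\lambda>0}\lambda P_\lambda$ into orthogonal eigenprojections, which gives $M^+=\sum_{\lambda>0}\lambda^{-1}P_\lambda$. Hence $M^+$ is symmetric and positive semidefinite, completing (i), and $\ker M^+=\ker M=\Q\mathbf 1$, so $M^+\mathbf 1=0$, i.e.\ $\sum_j n_{ij}=0$; this finishes (ii). Moreover $M^+M=\sum_{\lambda>0}P_\lambda$ is the orthogonal projection onto $(\ker M)^\perp=\mathbf 1^\perp$, namely the matrix $I-\tfrac1r\mathbf 1\mathbf 1^t$; comparing $(i,k)$-entries gives $\sum_j n_{ij}m_{jk}=\delta_{ik}-\tfrac1r$, which is (iii) (I read the ``$\delta_{kl}$'' of the statement as $\delta_{ik}$). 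Finally (iv) is immediate from this:
\[
\sum_{j,k}n_{ij}n_{kk}m_{jk}=\sum_k n_{kk}\Big(\delta_{ik}-\tfrac1r\Big)=n_{ii}-\tfrac1r\operatorname{Tr}(M^+),
\]
so that $n_{ii}-\sum_{j,k}n_{ij}n_{kk}m_{jk}=\operatorname{Tr}(M^+)/r$.

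The only genuinely geometric input --- and hence the only real obstacle --- is the sharp form of Zariski's lemma, used to pin down $\ker M=\Q\mathbf 1$ exactly; this also needs $\X_s$ to be connected, which holds because $X$ is geometrically irreducible and $\X\to\Spec\O$ is proper with $\O$ henselian local. Everything downstream of that is routine pseudoinverse bookkeeping.
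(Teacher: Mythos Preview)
Your argument is correct. Each item checks out: the geometric input (Zariski's lemma together with connectedness of $\X_s$, the latter following from properness and the henselian base) gives the kernel of $M$ exactly, and then the spectral description of $M^+$ yields $M^+M=I-\tfrac1r\mathbf 1\mathbf 1^t$, from which (iii) and (iv) drop out. Your reading of the typo $\delta_{kl}$ as $\delta_{ik}$ is the right one.

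By comparison, the paper does not prove this lemma at all: it simply refers to~\cite{CinkirPseudo}, where these properties of the discrete Laplacian and its pseudoinverse are established in the general setting of metrized graphs. So your approach is genuinely different in that it is self-contained rather than a citation. What you gain is independence from an external reference and a clear view of exactly which geometric fact is doing the work (namely the one-dimensionality of $\ker M$); what the paper's approach gains is brevity and a pointer to a source where further such identities are catalogued, several of which are used later (e.g.\ the effective-resistance formula in the proof of Lemma~\ref{HDGi}).
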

\begin{proof}
These properties are proved in~\cite{CinkirPseudo}.
\end{proof}
\begin{rk}
Note that when $\X$ is minimal and semistable, $\mathcal{G}_\X$ need not coincide with the reduction
graph $R(X)$ associated to
$X$ in~\cite{CRCapacity} and~\cite{ZhangAdmissible}.
For instance, suppose that $\X_s$ is given by two curves $\Gamma_1$ and $\Gamma_2$
intersecting transversally in $n$ points. 
In this case, $R(X)$ is the banana graph with $n$ edges of length~1, whereas 
$\mathcal{G}_\X$ is the complete graph with two vertices which are connected by a single edge of length $1/n$.
\end{rk}
From now on, we suppose that $D \in \Div_\Q(X)$ has degree one.
For $i \in \{1, \ldots, r\}$ we set
\begin{align*}
  v_i(D)&=(b_i\Gamma_i\ldot D_\X)\quad\text{and}\\
  w_i(D)&=b_i a'_i-v_i(D)=\frac{b_ia_i}{2g-2}-v_i(D).
\end{align*}
\begin{lemma}\label{HDGi}
Let $i \in \{1,\ldots,r\}$.
\begin{enumerate}[(a)]
\item  We have
  \[
    (U_D\ldot \Gamma_i)=
  -\sum_{j=1}^{r}n_{jj}m_{ij}+2v_i(D)-\frac{2}{r}.
  \]
\item If the special fiber of $\X$ is reduced, then
\[ 
(\K\ldot \Gamma_i)+2(D_\X\ldot \Gamma_i)-(U_{D}\ldot \Gamma_i)
\]
is independent of $D$ and nonnegative.
\end{enumerate}
\end{lemma}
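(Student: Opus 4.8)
The plan is to reduce part~(b) to part~(a) together with one combinatorial inequality about $M^{+}$, so I treat~(a) first.

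\emph{Part (a).} Since $U_{D}=\sum_{k}\gamma_{D,k}\Gamma_{k}$ is vertical, I would simply expand
\[
(U_{D}\ldot\Gamma_{i})=\sum_{k=1}^{r}\gamma_{D,k}(\Gamma_{k}\ldot\Gamma_{i})=-\frac{1}{b_{i}}\sum_{k=1}^{r}\frac{\gamma_{D,k}}{b_{k}}\,m_{ki},
\]
using $m_{ki}=-b_{k}b_{i}(\Gamma_{k}\ldot\Gamma_{i})$. Writing $\gamma_{D,k}=2(V_{D}\ldot V_{k})-V_{k}^{2}$ (recall $\deg D=1$) and encoding a vertical divisor $W=\sum c_{k}\Gamma_{k}$ by the vector $\widehat W=(c_{k}/b_{k})_{k}$, one has $(W\ldot W')=-\widehat W^{\,t}M\widehat W'$ and $(W\ldot\Gamma_{j})=-b_{j}^{-1}(M\widehat W)_{j}$; the characterizing intersection relations for $V_{D}$ and the $V_{k}$ from Proposition~\ref{Vexists} and the corollaries following it then exhibit $\widehat V_{D}$ and each $\widehat V_{k}$ as $M^{+}$ applied to an explicit vector built from the $a'_{i}$ and the $v_{i}(D)$. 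Substituting these in and invoking Lemma~\ref{mprops} — the identities $MM^{+}=M^{+}M=I-\tfrac1r J$ and $\sum_{j}m_{ij}=0$, the normalisations $\sum_{i}b_{i}a'_{i}=1$ and $\sum_{i}w_{i}(D)=0$, and, for the multiplicity corrections when some $b_{i}>1$, the trace identity~(iv) — one finds that the sum collapses termwise: the $M^{+}M$-contributions produce $2v_{i}(D)-\tfrac2r$, the diagonal of $M^{+}$ (entering through the $V_{k}^{2}$) produces $-\sum_{j}n_{jj}m_{ij}$, and all remaining terms are killed by the vanishing row sums of $M$. Linearity of $D\mapsto U_{D}$ is inherited from Proposition~\ref{udhor}. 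I expect the one slightly delicate point to be keeping the multiplicities $b_{i}$ in their correct places in the non-reduced case.

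\emph{Part (b).} Independence of $D$ is immediate from~(a): when $\X_{s}$ is reduced we have $b_{i}=1$ and $v_{i}(D)=(D_{\X}\ldot\Gamma_{i})$, so
\[
(\K\ldot\Gamma_{i})+2(D_{\X}\ldot\Gamma_{i})-(U_{D}\ldot\Gamma_{i})=a_{i}+\sum_{j=1}^{r}n_{jj}m_{ij}+\frac2r,
\]
which is $D$-free. For nonnegativity I would pass to effective resistances on $\mathcal{G}_{\X}$. Since $M$ is the Laplacian of $\mathcal{G}_{\X}$ (Lemma~\ref{mprops}(v)), its pseudoinverse satisfies $R_{ij}=n_{ii}+n_{jj}-2n_{ij}$ for the effective resistance $R_{ij}$, with $R_{ii}=0$; combining this with $\sum_{j}m_{ij}=0$ and $(MM^{+})_{ii}=1-\tfrac1r$ gives $\sum_{j}n_{jj}m_{ij}+\tfrac2r=\sum_{j\ne i}m_{ij}R_{ij}+2$. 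Feeding in the adjunction formula $a_{i}=-\Gamma_{i}^{2}+2p_{a}(\Gamma_{i})-2=m_{ii}+2p_{a}(\Gamma_{i})-2$ and $m_{ii}=-\sum_{j\ne i}m_{ij}$, this yields
\[
a_{i}+\sum_{j=1}^{r}n_{jj}m_{ij}+\frac2r=2p_{a}(\Gamma_{i})+\sum_{j\ne i}(-m_{ij})\bigl(1-R_{ij}\bigr).
\]
Now $p_{a}(\Gamma_{i})\ge 0$ and $-m_{ij}=(\Gamma_{i}\ldot\Gamma_{j})\ge 0$, while $R_{ij}$ is at most the resistance $-1/m_{ij}=1/(\Gamma_{i}\ldot\Gamma_{j})\le 1$ of the single edge joining $\Gamma_{i}$ to $\Gamma_{j}$ (and the $j$-th summand vanishes when $m_{ij}=0$), so every summand on the right is nonnegative and the lemma follows. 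The only genuinely non-formal input is the monotonicity $R_{ij}\le -1/m_{ij}$, i.e.\ that parallel paths can only lower effective resistance; this, and the handling of $M^{+}$ generally, is presumably where Cinkir's advice came in, the remainder being bookkeeping.
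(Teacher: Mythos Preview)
Your proposal is correct and follows essentially the same route as the paper. Part~(a) in both cases is a direct expansion of $(U_D\ldot\Gamma_i)$ using the pseudoinverse identities of Lemma~\ref{mprops}; for part~(b), the paper likewise reduces to effective resistances via $n_{ii}+n_{jj}-2n_{ij}=r(\Gamma_i,\Gamma_j)$ and concludes with the same edge-length bound $r(\Gamma_i,\Gamma_j)\le -1/m_{ij}\le 1$, the only cosmetic difference being that you keep the nonnegative term $2p_a(\Gamma_i)$ explicit whereas the paper absorbs it before stating the reduced inequality.
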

\begin{proof}
Assertion (a) is an easy computation using Lemma~\ref{mprops}:
\begin{align*}
   (U_D\ldot \Gamma_i)&=-\sum^r_{l=1}\sum^r_{j=1}
    c_{lj}(w_{lj}-2w_j(D)) m_{li}\\
                  &=-\sum^r_{l=1}\left(\sum^r_{j=1}
  c_{lj}(2v_j(D)-a'_j-\delta_{lj})\right) m_{li}\\
  &=-\sum^r_{l=1}\sum^r_{j=1}\sum^r_{k=1} \left(2n_{kj}a'_kv_j(D)-n_{kj}a'_ka'_j-n_{kj}a'_k\delta_{lj}-2n_{kj}\delta_{kl}v_j(D)\right.\\
  &\qquad\qquad\qquad\left.+n_{kj}\delta_{kl}a'_j+n_{kj}\delta_{kl}\delta_{lj}\right) m_{li}\\
  &= - \sum_{j=1}^{r}n_{jj}m_{ij}+2\sum^r_{l=1}\sum^r_{j=1}n_{lj}v_j(D)m_{li}\\
  &= - \sum_{j=1}^{r}n_{jj}m_{ij}+2\sum^r_{j=1}v_j(D)\left(\delta_{ij}-\frac{1}{r}\right)\\
  &= - \sum_{j=1}^{r}n_{jj}m_{ij}+2v_i(D)-\frac{2}{r}.
\end{align*}
Now we turn to assertion (b) of the lemma and compute, using (a) and the adjunction
formula:
  \begin{equation}\label{indep}
      (\K\ldot \Gamma_i)+2(D_\X\ldot \Gamma_i)-(U_{D}\ldot \Gamma_i)
        =m_{ii}+2p_a(\Gamma_i)-2+\sum^r_{j=1}n_{jj}m_{ij}+\frac{2}{r}
\end{equation}
 
We deduce the first part of (b) and furthermore:
  \begin{equation}\label{resistform}
\sum_jn_{jj}m_{ij}=\sum_j(n_{jj}+n_{ii}-2n_{ij})m_{ij}-\frac{2}{r}+2
\end{equation}
Note that by~\cite[Lemma~4.1]{CinkirPseudo} we have
\[n_{jj}+n_{ii}-2n_{ij}=r(\Gamma_i,\Gamma_j),\]
where $r(\Gamma_i,\Gamma_j)$ is the effective resistance between the nodes
$\Gamma_i$ and $\Gamma_j$ if we consider the metrized graph $\mathcal{G}_{\X}$ as a
resistive electric circuit, where the resistance along an edge $e$
is given by the length $\ell(e)$.
Hence, using~\eqref{indep} and~\eqref{resistform}, it suffices to show 
\begin{equation}\label{Inequ}
  m_{ii}+\sum_jr(\Gamma_i,\Gamma_j)m_{ij}\ge0
\end{equation}
in order to prove assertion (b).
But we can rewrite the left hand side of~\eqref{Inequ} as
\[
 \sum_{j\ne i} m_{ij}(r(\Gamma_i,\Gamma_j)-1)
\]
because of Lemma~\ref{mprops}~\eqref{Rows0}.
A component $\Gamma_j$ can only contribute a negative summand to this sum if $m_{ij}\ne0$,
which means that the nodes on $\mathcal{G}_{\X}$ corresponding to $\Gamma_i$ and
$\Gamma_j$ are connected by an edge $e$ of length
$\ell(e)=-\frac{1}{m_{ij}}\le 1$.
But in this case the effective resistance $r(\Gamma_i,\Gamma_j)$ is bounded from
above by $\ell(e)$.
Hence all terms in the sum are nonnegative, proving~\eqref{Inequ} and thus the lemma.
\end{proof}
If $\X_s$ is reduced, then we can give a formula for the intersection of $U_D$ with a
canonical divisor.
This result will be important in order to show that for reduced $\X_s$, our lower bound $\beta_D$ does not depend
on $D$ and that $\beta_D$ is nonnegative if $\X$ is also minimal.
\begin{lemma}\label{udk}
Suppose that the special fiber of $\X$ is reduced.  
Then we have
\[
  (U_D\ldot \K) = -\sum^r_{i=1}V_i^2a_i.
\]
If, furthermore, $\X$ is minimal, then $(U_D\ldot \K)$ is nonnegative.
\end{lemma}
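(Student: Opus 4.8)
The plan is to expand $(U_D\ldot\K)$ using the definition $U_D=\sum_i\gamma_{D,i}\Gamma_i$ and to show that the terms involving $V_D$ cancel, leaving only $-\sum_i a_iV_i^2$. First I would record that, since $\X_s$ is reduced (so $b_i=1$) and $D$ has degree one, the coefficient simplifies to $\gamma_{D,i}=V_D^2-(V_D-V_i)^2=2(V_D\ldot V_i)-V_i^2$; all of these pairings are well defined because $V_D$ and the $V_i$ are vertical and hence orthogonal to $\X_s$. This gives $(U_D\ldot\K)=\sum_i\gamma_{D,i}a_i=2\sum_i a_i(V_D\ldot V_i)-\sum_i a_iV_i^2$, so the first part of the lemma reduces to the claim $\sum_i a_i(V_D\ldot V_i)=0$.

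To establish that claim I would introduce the vertical $\Q$-divisor $W=\sum_i a'_iV_i\in Z^1(\X_s)_\Q$ and show it is a rational multiple of the whole fiber. Using reducedness, $\sum_i a'_i=\frac{1}{2g-2}\sum_i(\Gamma_i\ldot\K)=\frac{1}{2g-2}(\X_s\ldot\K)=1$; combined with the defining relation $(V_i\ldot\Gamma_j)=a'_j-\delta_{ij}$ this yields $(W\ldot\Gamma_j)=a'_j\sum_i a'_i-a'_j=0$ for every $j$. By the non-degeneracy of the intersection pairing on $Z^1(\X_s)_\Q$ modulo $\X_s$ (here I use that $\X_s$ is connected, which follows from geometric irreducibility of $X$), $W$ must be a rational multiple of $\X_s$; since $V_D$ is vertical and orthogonal to $\X_s$, we get $(V_D\ldot W)=0$, i.e.\ $\sum_i a'_i(V_D\ldot V_i)=0$, hence $\sum_i a_i(V_D\ldot V_i)=0$. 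Plugging this back in proves the displayed formula $(U_D\ldot \K)=-\sum_i V_i^2a_i$.

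For the final assertion, assuming $\X$ minimal, the plan is to show that every summand $a_iV_i^2$ is $\le 0$. Each $V_i$ lies in $\bigoplus_j\Q\Gamma_j$, on which the intersection form is negative semidefinite, so $V_i^2\le 0$; it thus remains to check $a_i\ge 0$. If $\X_s$ is irreducible this is immediate, as $a_1=(\X_s\ldot\K)=2g-2>0$. If $\X_s$ has at least two components, then $\Gamma_i$ is a proper subdivisor of the connected fiber $\X_s$, so $\Gamma_i^2\le -1$, and the adjunction formula $a_i=-\Gamma_i^2+2p_a(\Gamma_i)-2$ gives $a_i\ge -1$, with equality forcing $\Gamma_i$ to be a $(-1)$-curve, which is ruled out by minimality via Castelnuovo's criterion. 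Hence $a_i\ge 0$ for all $i$ and $(U_D\ldot\K)=-\sum_i V_i^2a_i\ge 0$.

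I expect the only real subtlety to be the identification of $W=\sum_i a'_iV_i$ with a multiple of the full fiber, and in particular the input $\sum_i a'_i=1$, which is exactly where the hypothesis that $\X_s$ is reduced enters; the remaining ingredients are routine bookkeeping with the adjunction formula and the standard negative-semidefiniteness of the intersection pairing on fiber components.
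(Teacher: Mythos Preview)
Your argument is correct and rests on the same core identity as the paper's proof, namely that $\sum_i a'_i(a'_j-\delta_{ij})=0$ for each $j$ once $\sum_i a'_i=1$. The paper, however, first invokes linearity of $D\mapsto U_D$ to reduce to the case $D=D_l$, and then carries out the vanishing $\sum_i a_i(V_i\ldot V_l)=0$ via the explicit pseudoinverse description $V_l=\sum_i b_ic_{li}\Gamma_i$ with $c_l=-M^+w_l$, rewriting the sum as $-(2g-2)\,w_l^{t}M^+\sum_i a'_iw_i$ and checking that $\sum_i a'_iw_i=0$ coordinatewise. Your route packages the same computation more conceptually: you form $W=\sum_i a'_iV_i$, verify $(W\ldot\Gamma_j)=0$ directly from the defining relation for $V_i$, and then appeal to non-degeneracy of the intersection form modulo $\X_s$ to conclude $(V_D\ldot W)=0$ for any degree-one $D$. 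This bypasses both the reduction to $D_l$ and the Moore--Penrose machinery, at the cost of invoking the (standard) fact that the kernel of the fiberwise intersection form is spanned by $\X_s$. The nonnegativity argument is identical in substance to the paper's.
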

\begin{proof}
Since $D \mapsto U_D$ is linear in $D$ by Proposition~\ref{udhor},
we may assume that we have $D=D_l$ for some $l\in\{1,\ldots,r\}$, that is, $(D\ldot
\Gamma_j) = \delta_{lj}$ for all $j \in \{1,\ldots,r\}$.
By definition of $U_D$, we have
\[
 (U_{D}\ldot \K)=-\sum^r_{i=1}V_i^2a_i+2\sum^r_{i=1}(V_i\ldot V_l)a_i,
\]
so we have to show that
\begin{equation}\label{GiGt}
\sum^r_{i=1}(V_i\ldot V_l)a_i=0.
\end{equation}
Note that
\[
  (V_i\ldot V_l)= \sum^r_{j=1}c_{lj} \left(V_i\ldot \Gamma_j\right)=
\sum^r_{j=1}c_{lj}w_{ij},
\]                       
so that, using  $c^t_l=M^+ w^t_l$,
we find
\[
\sum^r_{i=1}(V_i\ldot V_l)a_i=-(2g-2)w_l^t M^+ \sum^r_{i=1}a'_iw_i.
\]
Therefore the proof of~\eqref{GiGt} follows from the fact that for each
$j\in\{1,\ldots,r\}$ we have
\[
\sum^r_{i=1} w_{ij}a'_i=a'_j\sum^r_{i=1}a'_i-a'_j=0,
\]
since $\sum^r_{i=1}a'_i=1$.  
If $\X$ is minimal, then the adjunction formula implies that 
\[
a_i=(\K\ldot \Gamma_i)=-\Gamma_i^2+2p_a(\Gamma_i)-2\ge0.
\]
for all $i$, which completes the proof of the lemma.
\end{proof}
The following result can be deduced immediately from Lemma~\ref{udk}.
\begin{cor}
Suppose that $\X$ is minimal with reduced special fiber.  
Then $\beta_D$ is nonnegative.
\end{cor}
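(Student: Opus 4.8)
The plan is to bound the two summands in the definition
\[
\beta_D = \frac{1-g}{g}\left(2V_D+U_D\right)^2 + 2(\K\ldot U_D)
\]
separately, using the two hypotheses in tandem. Note first that, by Proposition~\ref{Vexists} and the definition of $U_D$, the $\Q$-divisor $2V_D+U_D$ is vertical with support contained in the special fiber $\X_s$; moreover, although $V_D$ (and each $V_i$) is only determined modulo rational multiples of $\X_s$, both $U_D$ itself and the number $\left(2V_D+U_D\right)^2$ are unaffected by this ambiguity, since $\X_s^2=0$ and $(\X_s\ldot\Gamma_i)=0$ for all $i$. So the right-hand side is well defined, and it suffices to show that each of the two terms is nonnegative.

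For the second term I would simply invoke Lemma~\ref{udk}: this is precisely the step at which minimality of $\X$ together with the hypothesis that $\X_s$ is reduced enters, and it yields $(\K\ldot U_D)=(U_D\ldot\K)\ge 0$, hence $2(\K\ldot U_D)\ge 0$.

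For the first term I would appeal to the standard negative semidefiniteness of the fibral intersection pairing on the $\Q$-span of the components $\Gamma_1,\dots,\Gamma_r$ (Zariski's lemma) — the same pairing whose non-degeneracy modulo $\X_s$ was already used in the proof of Proposition~\ref{Vexists}. Since $2V_D+U_D$ is supported in $\X_s$, this gives $\left(2V_D+U_D\right)^2\le 0$; and because $g>1$, the factor $\tfrac{1-g}{g}$ is strictly negative, so $\frac{1-g}{g}\left(2V_D+U_D\right)^2\ge 0$. Adding the two nonnegative contributions yields $\beta_D\ge 0$, as claimed.

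I do not expect any real obstacle here: the entire content of the corollary is packed into Lemma~\ref{udk}, and everything else is the elementary sign bookkeeping above. If there is a point deserving care, it is that the self-intersection $\left(2V_D+U_D\right)^2$ must be taken in $\Div_\Q(\X)$ with the rational-valued intersection multiplicity, so that Zariski's lemma applies verbatim to the $\Q$-coefficient cycle $2V_D+U_D$.
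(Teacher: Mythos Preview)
Your argument is correct and matches the paper's approach: the paper simply states that the corollary ``can be deduced immediately from Lemma~\ref{udk}'', leaving implicit precisely the two ingredients you spell out, namely $(\K\ldot U_D)\ge 0$ from Lemma~\ref{udk} and $(2V_D+U_D)^2\le 0$ from the negative semidefiniteness of the intersection form on vertical divisors.
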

It is natural to ask whether $\beta_D$ depends on $D$.
We will now show that this is not the
case when $\X$ has reduced special fiber; furthermore, we will provide a rather explicit formula for $\beta_D$.
\begin{lemma}\label{betaformula}
If $\X$ has reduced special fiber, then $\beta=\beta_D$ is independent of $D$.
More precisely, we have the following formula for $\beta$:
\begin{align*}
\beta&=\frac{4(g-1)}{gr}\mathrm{Tr}(M^+)+\frac{g-1}{g}\sum^r_{i=1}\sum^r_{j=1}n_{ii}n_{jj}m_{ij}+\frac{2(g-1)}{g}\sum^r_{i=1}a_in_{ii}-\frac{1}{g}\sum^r_{i=1}\sum^r_{j=1}a_ia_jn_{ij}.\\
\end{align*}
\end{lemma}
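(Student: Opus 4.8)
The idea is a direct linear-algebra computation of the two summands of $\beta_D=\frac{1-g}{g}(2V_D+U_D)^2+2(\K\ldot U_D)$, using throughout that $\X_s$ reduced means $b_i=1$ for all $i$, that $\ker M=\ker M^+$ is spanned by the all-ones vector $\mathbf 1$, and the identities of Lemma~\ref{mprops}. The basic observation is that for any vertical $\Q$-divisor $W=\sum_j x_j\Gamma_j$ one has $(W\ldot\Gamma_k)=-\sum_j x_jm_{jk}$, so the intersection vector $y=\big((W\ldot\Gamma_j)\big)_j$ satisfies $y=-Mx$ and, by Lemma~\ref{mprops}~\eqref{Rows0}, $\mathbf 1^ty=0$; hence $x\equiv-M^+y\pmod{\ker M}$ and
\[
W^2=\sum_k x_k(W\ldot\Gamma_k)=x^ty=-y^tM^+y,
\]
the ambiguity of $x$ along $\ker M=\langle\mathbf 1\rangle$ contributing nothing. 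In particular $W^2$ depends only on the vector $y$, so $\beta_D$ is unchanged if $V_D$ is replaced by $V_D+c\X_s$.

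For the term $2(\K\ldot U_D)$: by Lemma~\ref{udk} we have $(\K\ldot U_D)=-\sum_i V_i^2a_i$, which is already independent of $D$. By~\eqref{vldef} the intersection vector of $V_l$ is $w_l=a'-e_l$, where $a'=(a'_1,\dots,a'_r)^t$ and $e_l$ is the $l$-th standard basis vector, and $\mathbf 1^tw_l=0$ because $\sum_i a'_i=1$. Thus $V_l^2=-w_l^tM^+w_l=-\sum_{i,j}a'_ia'_jn_{ij}+2\sum_k n_{lk}a'_k-n_{ll}$. Multiplying by $a_l$, summing over $l$, and using $\sum_l a_l=2g-2$ together with $a'_i=a_i/(2g-2)$ gives
\[
2(\K\ldot U_D)=-\frac{1}{g-1}\sum_{i,j}a_ia_jn_{ij}+2\sum_i a_in_{ii}.
\]

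For the term $(2V_D+U_D)^2$: write $W=2V_D+U_D$. Since $(V_D\ldot\Gamma_j)=a'_j-v_j(D)$ (the defining property of $V_D$, with $b_j=1$) and $(U_D\ldot\Gamma_j)=-\sum_k n_{kk}m_{jk}+2v_j(D)-\tfrac2r$ by Lemma~\ref{HDGi}(a), the $v_j(D)$-terms cancel, so the intersection vector of $W$ is $y=2a'-Mq-\tfrac2r\mathbf 1$, where $q=(n_{11},\dots,n_{rr})^t$. This is independent of $D$, which together with the previous step proves that $\beta=\beta_D$ does not depend on $D$. Now $W^2=-y^tM^+y$, and since $M^+\mathbf 1=0$, $MM^+M=M$, and $M^+M=I-\tfrac1r J$ (with $J$ the all-ones matrix) by Lemma~\ref{mprops}~\eqref{Rows0} and~\eqref{Prod}, the product expands to
\[
(2V_D+U_D)^2=-4\sum_{i,j}a'_ia'_jn_{ij}+4\sum_i a'_in_{ii}-\frac4r\mathrm{Tr}(M^+)-\sum_{i,j}n_{ii}n_{jj}m_{ij}.
\]

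Finally, multiply this by $\frac{1-g}{g}$, substitute $a'_i=a_i/(2g-2)$, and add the expression for $2(\K\ldot U_D)$ from the second step: the two contributions proportional to $\sum_{i,j}a_ia_jn_{ij}$ combine to coefficient $-\tfrac1g$ and the two proportional to $\sum_i a_in_{ii}$ to coefficient $\tfrac{2(g-1)}{g}$, yielding exactly the asserted formula for $\beta$. The only real work is this last bit of bookkeeping — tracking signs and the factors of $2g-2$ that appear when passing between $a_i$ and $a'_i$ — and checking that the $\mathbf 1$-direction never contributes, which is precisely where reducedness of $\X_s$ enters (so that $\ker M=\langle\mathbf 1\rangle$ and $\X_s^2=0$); no ideas beyond Lemmas~\ref{mprops},~\ref{HDGi} and~\ref{udk} are needed.
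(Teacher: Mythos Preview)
Your argument is correct and, in fact, a bit cleaner than the paper's. The paper computes $V_D^2$, $(V_D\ldot U_D)$ and $U_D^2$ separately (equations~\eqref{vdsq}--\eqref{udsq}), adds them, and then invokes the trace identity of Lemma~\ref{mprops}~\eqref{trace} to eliminate the residual $D$-dependence and arrive at~\eqref{ud2vdsq}. You instead observe directly that the intersection vector of $W=2V_D+U_D$ is $y=2a'-Mq-\tfrac{2}{r}\mathbf{1}$, which is visibly independent of $D$ because the $v_j(D)$-contributions from $2V_D$ and from $U_D$ (via Lemma~\ref{HDGi}(a)) cancel; then $W^2=-y^tM^+y$ is computed in one shot using only $M^+\mathbf{1}=0$ and $M^+M=I-\tfrac{1}{r}J$ from Lemma~\ref{mprops}~\eqref{Rows0},~\eqref{Prod}. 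This bypasses the separate cross-term calculations and the need for part~\eqref{trace} of Lemma~\ref{mprops}. Your treatment of $2(\K\ldot U_D)$ via Lemma~\ref{udk} and $V_l^2=-w_l^tM^+w_l$ matches the paper's~\eqref{kud} exactly, and your final bookkeeping combining the $a_i$- and $a'_i$-coefficients is correct.
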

\begin{proof}
The proof is essentially a straightforward, but tedious computation using the properties of $M$ and
$M^+$ listed in Lemma~\ref{mprops}, so we do not present all details.
Suppose that $\X$ has reduced special fiber.
We first give an expression for $(K\ldot U_D)$. 
By Lemma~\ref{udk}, we have
\[
  (\K \ldot U_D) = -\sum^r_{i=1}V_i^2a_i.
\]
A simple computation shows that for a fixed $i \in \{1,\ldots,r\}$ we have
\[
V_i^2 = 2\sum_{j}a'_jn_{ij}-n_{ii}-\sum_{j,k}a'_ja'_kn_{jk};
\]
using Lemma~\ref{mprops} and $\sum_ia'_i=1$, this implies
\begin{equation}\label{kud}
(\K\ldot U_D)= (2g-2)\left(\sum_ia'_in_{ii} -\sum_{i,j}a'_ia'_jn_{ij}\right).
\end{equation}
Now we rewrite $(U_D+2V_D)^2$.
From the definition of $V_D$ we get
\begin{equation}\label{vdsq}
    V_D^2=2\sum_{i,j}a'_iv_j(D)n_{ij}-\sum_{i,j}a'_ia'_jn_{ij}-
    \sum_{i,j}v_i(D)v_j(D)n_{ij}.
\end{equation}
Next we compute, using Lemma~\ref{mprops} and omitting details:
\begin{align}
    (V_D\ldot U_D) =&\;\sum_i\left(2(V_D\ldot V_i)-V^2_i\right)w_i(D)\notag\\
                   =&\;\sum_{i,j,k}a'_ja'_kv_i(D)n_{jk}-2\sum_{i,j,k}a'_kv_i(D)v_j(D)n_{jk}+2\sum_{i,j}v_i(D)v_j(D)n_{ij}\notag\\
                   &\;+\sum_{i,j,k}a'_ia'_ja'_kn_{jk}
    -2\sum_{ij}a'_ia'_jn_{ij}+\sum_ia'_in_{ii}-\sum_iv_i(D)n_{ii}\notag\\     
                   =&\; 2\sum_{i,j}v_i(D)v_j(D)+\sum_ia'_in_{ii}-2\sum_{i,j}a'_iv_j(D)n_{ij}-\sum_iv_i(D)n_{ii}
\label{vdud}.
\end{align}
The computation of $U_D^2$ more complicated than the previous one, so we
only provide a rough sketch:
\begin{align}
    U_D^2 =&-4\sum_{i,j}(V_D\ldot V_i)(V_D\ldot V_j)m_{ij}+4\sum_{i,j}(V_D\ldot
    V_i)V_j^2m_{ij}-\sum_{i,j}V_i^2V_j^2m_{ij}\notag\\
          =&\;
    4\sum_{i,j,k}v_i(D)n_{ij}n_{kk}m_{jk}-\sum_{i,j}n_{ii}n_{jj}m_{ij}-4\sum_{i,j,k,l}v_i(D)v_l(D)n_{ij}n_{kl}m_{jk}\notag\\
          =&\;    4\sum_{i,j,k}v_i(D)n_{ij}n_{kk}m_{jk}-\sum_{i,j}n_{ii}n_{jj}m_{ij}-4\sum_{i,j}v_i(D)v_j(D)n_{ij}
\label{udsq}.
\end{align}
Combining~\eqref{vdsq},~\eqref{vdud} and~\eqref{udsq}, we find
\[
(U_D+2V_D)^2=4\sum_ia'_in_{ii}-\sum_{i,j}n_{ii}n_{jj}m_{ij}-4\sum_{i,j}a'_ia'_jn_{ij}+4\sum_{i,j,k}v_i(D)n_{ij}n_{kk}m_{jk}-4\sum_iv_i(D)n_{ii}.
\]
Part~(\ref{trace}) of Lemma~\ref{mprops} implies
\begin{equation}\label{ud2vdsq}
    (U_D+2V_D)^2=4\sum_ia'_in_{ii}-\sum_{i,j}n_{ii}n_{jj}m_{ij}-4\sum_{i,j}a'_ia'_jn_{ij}-\frac{4\mathrm{Tr}(M^+)}{r},
\end{equation}
which does not depend on $D$.
The result now follows from~\eqref{kud} and~\eqref{ud2vdsq}.
\end{proof}
\begin{ex}\label{betaex}
Keeping the notation of Example~\ref{vuex}, Lemma~\ref{betaformula} 
immediately implies that 
\[
    \beta = \frac{1}{2s}(s+2p_a-2).
\]
\end{ex}
\begin{rk}\label{constants}
    Note that the first two terms in the formula for $\beta$ given in Lemma~\ref{betaformula} only depend
on $M$, so they only depend on the combinatorial configuration of $\X_s$.
The last two terms, however, do depend on the arithmetic genera of the
irreducible components; more precisely, we have
\[
    \frac{2(g-1)}{g}\sum^r_{i=1}a_in_{ii}-\frac{1}{g}\sum^r_{i=1}\sum^r_{j=1}a_ia_jn_{ij}=\frac{2g-2}{g}\sum_ia_i\left(n_{ii}-\sum_ja'_jn_{ij}\right).
\]
Therefore, if $\X$ is semistable and minimal, then $\beta$ can be viewed as an
invariant of the polarized metrized graph $(R(X), {\bf q})$ associated to $\X_s$, where the
polarization ${\bf q}$ assigns to each component its arithmetic genus,
see~\cite[\S4]{CinkirBogomolov}.
\end{rk}
\begin{rk}\label{eps-beta-g2}
It seems worthwile to relate $\beta$ to other invariants of $(R(X), {\bf q})$, such
as Zhang's invariants $\eps$ (called $r$ in~\cite{ZhangAdmissible}), $\varphi$ and
$\lambda$. 
See~\cite{CinkirBogomolov} for definitions of and some relations between these invariants.
If $X$ is hyperelliptic, then it would also be interesting to compare $\beta$ to the
invariant $\chi$ studied, for instance, in~\cite{deJongSymmetric}.
Because of its potential relevance for an effective version of the Bogomolov conjecture
for curves over number fields (see Remark~\ref{bogomolov}), it is especially interesting
to compare $\beta$ to $\eps$.
We have computed $\beta$ for all semistable reduction types of genus~2 curves.
Table~\ref{compare} contains the values of $\beta$ and the values of $\eps$, computed by
de Jong, cf.~\cite[\S2]{deJongGenus2}.
We find that in genus~2, we always have $\beta \le \eps$.
\begin{center}
\begin{table}\begin{tabular}{|l|l|l|}
\hline
Type & $\eps$ & $\beta$ \\
\hline
I & 0 & 0 \\
II$(a)$ & $a$ & $a-1$ \\
III$(a)$ & $\frac{1}{6}a$ & $\frac{1}{6}a-\frac{1}{6a}$ \\
IV$(a,b)$ & $a+\frac{1}{6}b$ &$a+\frac{1}{6}b-\frac{1}{6b}$  \\
V$(a,b)$ & $\frac{1}{6}(a+b)$ & $\frac{1}{6}(a+b)-\frac{1}{6a}  -\frac{1}{6b}  $ \\
VI$(a,b,c)$ & $a+\frac{1}{6}(b+c)$ &  $a+\frac{1}{6}(b+c)-\frac{1}{6b}  -\frac{1}{6c}$ \\
VII$(a,b,c)$ & $\frac{1}{6}(a+b+c)+\frac{1}{6}\frac{abc}{ab+ac+bc}$ &
$\frac{1}{6}(a+b+c)+\frac{1}{6}\frac{abc}{ab+ac+bc} - \frac{a^2b + a^2c +
ab^2 + 6abc + ac^2 + b^2c + bc^2}{6(ab+ac+bc)^2}$   \\
\hline
\end{tabular}
\caption{$\;\;$The invariants $\eps$ and $\beta$ in genus~2}\label{compare}\end{table}
\end{center}
\end{rk}
\section{Semipositive hermitian line bundles}\label{shlb} 
Let $K$ be a number field with ring of integers $\O_K$ and let $\X$ be a regular
arithmetic surface over $\O_K$ whose generic fiber $X=\X_K$ is a smooth projective
geometrically irreducible curve of genus $g>1$.
In this section we prove several general lemmas about certain hermitian
line bundles on $\X$.
All of these will be used in the proof of Proposition~\ref{LDBound}.
Several results of this section are quite similar to results from~\cite{ZhangPos}.
We start with a number of definitions.
\begin{defn}
    Let $\overline{\L}$ be a hermitian line bundle on $\X$.
If $E$ is an irreducible effective divisor on $X$ with Zariski closure
$\mathcal{E}$, then the {\em height} of
$E$ with respect to $\overline{\L}$ is defined by 
\[
h_{\overline{\L}}(\mathcal{E})=\frac{(\overline{\L}\ldot
\Oa(\mathcal{E}))}{[K:\Q]\deg(\mathcal{E}_K)},
\]
where the metric on $\O(\mathcal{E})$ is admissible in the Arakelov-theoretic
sense and $(\cdot\ldot\cdot)$ is the arithmetic intersection pairing on $\X$, see for
instance~\cite{Soule}.
We extend this to arbitrary effective divisors on $\X$ by linearity.
\end{defn}
\begin{defn}\label{DefPos}
We say that a hermitian line bundle $\overline{\L}$ is {\em relatively
semipositive} if it has nonnegative intersection with all irreducible vertical
components of $\X$. If $\overline{\L}$ has nonnegative (resp. positive)
intersection with all irreducible horizontal divisors on $\X$, then we call
$\overline{\L}$ {\em horizontally semipositive} (resp. {\em horizontally positive}).
\end{defn}
\begin{defn}\label{DefEff}
Let $\overline{\L}$ be a hermitian line bundle on $\X$. 
We call a nonzero section $s$ of $\overline{\L}$ {\em effective} (resp. {\em strictly
effective}) if $\|s\|_\mathrm{sup}\le1$ (resp. $\|s\|_\mathrm{sup}<1$). 
We say that $\overline{\L}$ is {\em ample} if $\L$ is ample and $H^0\left(\X,\L^{\otimes
n}\right)$ has a basis consisting of strictly effective sections for $n\gg0$.
If this holds for $n=1$, then we call $\overline{\L}$ {\em very ample}.
\end{defn}
\begin{lemma}\label{ImitateZhang}
Let $\overline{\L}$ be a hermitian line bundle on $\X$. For any hermitian line bundle
$\overline{\M}$ on $\X$ and $a,b\in\N$ we set
\begin{align*}
\overline{\M}_{a,b} = \overline{\M}^{\otimes a}\otimes \overline{\L}^{\otimes b}. 
\end{align*}
If $\overline{\L}^2<0$, then there exists no ample hermitian line bundle
$\overline{\M}$ on $\X$ with the following property:
For all $a,b \in \mathbb{N}$ such that $\overline{\M}_{a,b}^2
>0$, we also have 
\begin{align}\label{fast-positiv}
\left(\overline{\L}\ldot \overline{\M}_{a,b}\right)\ge 0. 
 \end{align}
 \end{lemma}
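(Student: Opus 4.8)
The plan is to argue by contradiction, using the asymptotic behaviour of the intersection number $\overline{\M}_{a,b}^2$ as a quadratic form in $a,b$ together with an arithmetic Hodge-index–type input. Expanding bilinearly,
\[
  \overline{\M}_{a,b}^2 = a^2\,\overline{\M}^2 + 2ab\,(\overline{\M}\ldot\overline{\L}) + b^2\,\overline{\L}^2 .
\]
Since $\overline{\M}$ is ample, $\overline{\M}^2>0$, while by hypothesis $\overline{\L}^2<0$. First I would observe that for suitable coprime $a,b$ the ratio $b/a$ can be taken arbitrarily large while keeping $\overline{\M}_{a,b}^2>0$: indeed $\overline{\M}_{a,b}^2>0$ precisely when $b/a$ lies below the positive root of the quadratic $\overline{M}^2 + 2t(\overline{\M}\ldot\overline{\L}) + t^2\overline{\L}^2$, and that root is a fixed positive number $t_0>0$ independent of $a,b$. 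So there are infinitely many pairs $(a,b)\in\N^2$ with $\overline{\M}_{a,b}^2>0$ and $b/a$ as close to $t_0$ as we like.

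Next I would bring in the assumed inequality \eqref{fast-positiv}: for all such pairs we would have $(\overline{\L}\ldot\overline{\M}_{a,b})\ge 0$, i.e.
\[
  a\,(\overline{\L}\ldot\overline{\M}) + b\,\overline{\L}^2 \ge 0,
  \qquad\text{equivalently}\qquad
  \frac{b}{a}\le \frac{(\overline{\L}\ldot\overline{\M})}{-\overline{\L}^2},
\]
using $\overline{\L}^2<0$ to reverse the inequality. This is a fixed finite upper bound on $b/a$. The contradiction will come from comparing the two bounds on $b/a$: if the threshold $t_0$ coming from positivity of $\overline{\M}_{a,b}^2$ exceeds $(\overline{\L}\ldot\overline{\M})/(-\overline{\L}^2)$, we can choose $(a,b)$ violating the displayed inequality while still having $\overline{\M}_{a,b}^2>0$. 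A short computation with the quadratic shows $t_0 = \bigl(-(\overline{\M}\ldot\overline{\L})+\sqrt{(\overline{\M}\ldot\overline{\L})^2 - \overline{\M}^2\,\overline{\L}^2}\bigr)/(-\overline{\L}^2)$, and since $-\overline{\M}^2\,\overline{\L}^2>0$ one gets $t_0 > \bigl(-(\overline{\M}\ldot\overline{\L}) + |(\overline{\M}\ldot\overline{\L})|\bigr)/(-\overline{\L}^2) \ge 0$; more to the point, because the discriminant strictly exceeds $(\overline{\M}\ldot\overline{\L})^2$, one checks directly that $t_0$ is strictly larger than $(\overline{\L}\ldot\overline{\M})/(-\overline{\L}^2)$ (the latter being the relevant bound regardless of the sign of $(\overline{\L}\ldot\overline{\M})$). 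Hence a rational number $b/a$ strictly between these two values exists, and scaling to integers gives the desired pair.

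The main obstacle I anticipate is not the algebra of the quadratic form but the passage from a real ratio to an honest pair $(a,b)\in\N^2$ with $\overline{\M}_{a,b}^2>0$ strictly: one must ensure that after clearing denominators the strict inequality $\overline{\M}_{a,b}^2>0$ is preserved, which is automatic since the condition is an open (strict polynomial) condition on $b/a$ and the chosen ratio lies strictly inside the admissible interval; taking any rational point in that open interval and scaling works. A secondary point to handle carefully is the degenerate case $(\overline{\M}\ldot\overline{\L})\le 0$, where \eqref{fast-positiv} already fails for, say, $(a,b)=(1,0)$ if $\overline{\M}^2>0$ — but that case is even easier, since then some $\overline{\M}_{a,0}$ has positive self-intersection while $(\overline{\L}\ldot\overline{\M}_{a,0}) = a(\overline{\L}\ldot\overline{\M})$ need not be nonnegative; one still has to produce a pair with $\overline{\M}_{a,b}^2>0$, which the interval argument supplies. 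In all cases the contradiction is with the supposed existence of the ample $\overline{\M}$, completing the proof.
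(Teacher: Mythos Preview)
Your approach is essentially the paper's: argue by contradiction via the quadratic $\overline{\M}_{a,b}^2$ in the ratio of $a$ and $b$, using $\overline{\M}^2>0$ from ampleness. The paper parametrizes by $a/b$ rather than $b/a$ and, instead of computing $t_0$ explicitly, derives $(\overline{\L}\ldot\overline{\M})>0$ and $\overline{\L}^2+t_0(\overline{\L}\ldot\overline{\M})\ge 0$ by letting $a/b\to t_0^+$, then combines these with $p(t_0)=0$ to get the contradiction $(\overline{\L}\ldot\overline{\M})t_0+\overline{\M}^2t_0^2\le 0$; your direct comparison of $t_0$ with $(\overline{\L}\ldot\overline{\M})/(-\overline{\L}^2)$ is the same argument unpacked.

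One genuine slip: your formula for $t_0$ has the wrong sign. The positive root of $\overline{\L}^2 t^2 + 2(\overline{\M}\ldot\overline{\L})t + \overline{\M}^2$ is
\[
t_0 \;=\; \frac{(\overline{\M}\ldot\overline{\L})+\sqrt{(\overline{\M}\ldot\overline{\L})^2 - \overline{\M}^2\,\overline{\L}^2}}{-\overline{\L}^2},
\]
not $\bigl(-(\overline{\M}\ldot\overline{\L})+\sqrt{\cdots}\bigr)/(-\overline{\L}^2)$. With the correct $t_0$ the key inequality $t_0>(\overline{\L}\ldot\overline{\M})/(-\overline{\L}^2)$ is immediate from $\sqrt{D}>0$; with your stated formula it can fail (it would require $-\overline{\L}^2\overline{\M}^2>3(\overline{\M}\ldot\overline{\L})^2$), so the correction is needed. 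Also, ``$b/a$ can be taken arbitrarily large'' should read ``arbitrarily close to $t_0$ from below'', as your next sentence in fact says.
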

 \begin{proof}
This proof is somewhat similar to the first part of the proof of~\cite[Theorem~6.3]{ZhangPos}. 
Suppose that $\overline{\L}^2<0$ and that $\overline{\M}$ is a hermitian line
bundle on $\X$
satisfying~\eqref{fast-positiv}.
Since $\overline{\M}$ is ample, it has  positive  arithmetic self-intersection by~\cite[Theorem~1.3]{ZhangPos}. 
Therefore, if $p(t)$ denotes the polynomial
\[
        p(t)= 
        \overline{\L}^2+2(\overline{\L}\ldot \overline{\M})t+\overline{\M}^2t^2,
\]
then there is a positive real number $t_0$ satisfying $p(t_0)=0$ and $p(t)>0$ for every $t>t_0$.
Let $a,b\in\N$ such that $a/b>t_0$. Then we find 
\[
\overline{\M}_{a,b}^2=b^2p(a/b)>0.
\]
By~\eqref{fast-positiv}, we know that
\[
\frac{1}{b} \left(\overline{\L}\ldot \overline{\M}_{a,b}\right) =
\overline{\L}^2+\frac{a}{b}(\overline{\L}\ldot \overline{\M})\ge0.
\]
In particular, our assumption that $\overline{\L}^2<0$ implies 
\begin{equation}\label{v2-lm}
        (\overline{\L}\ldot \overline{\M})>0,
\end{equation}
and also, since  $a/b$ can be arbitrary  close to $t_0$,
\begin{equation}\label{v2-lllmt0}
\overline{\L}^2+(\overline{\L}\ldot \overline{\M})t_0\ge0.
\end{equation}
Now we can derive a contradiction as in the proof of~\cite[Theorem~6.3]{ZhangPos}. Namely, combining $p(t_0)=0$ and~\eqref{v2-lllmt0} implies
\[
        (\overline{\L}\ldot \overline{\M})t_0+\overline{\M}^2t_0^2\le0.
\]
But using $\overline{\M}^2>0$  and~\eqref{v2-lm}, we see that this is impossible.
\end{proof}
The following result provides us with a method to show that two hermitian line bundles on $\X$
        have nonnegative intersection.
\begin{lemma} \label{Mlist}
Let $\overline{\L}$ and $\overline{\M}$ be hermitian line bundles on $\X$.
 If $\overline{\M}$ has an effective global section $s$ such that
 $h_{\overline{\L}}({\rm{div}(s)^{hor}})\ge 0$ and $\overline{\L}$ is relatively semipositive, then
$(\overline{\L}\ldot \overline{\M})\ge 0$.
\end{lemma}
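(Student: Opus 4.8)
The plan is to reduce the statement to a decomposition of $\Oa(\mathrm{div}(s))$ into its horizontal and vertical parts and then analyze each contribution separately. Since $s$ is a global section of $\overline{\M}$, we have an isometry $\overline{\M} \isom \Oa(\mathrm{div}(s))$, where $\mathrm{div}(s)$ is an effective arithmetic divisor on $\X$ whose archimedean Green's-function data is nonnegative because $\|s\|_{\mathrm{sup}} \le 1$. Write $\mathrm{div}(s) = H + V$, where $H = \mathrm{div}(s)^{hor}$ is the horizontal part (the Zariski closure of $\mathrm{div}(s)_K$, possibly with an admissible metric correction) and $V$ is a vertical effective divisor supported on finitely many special fibers. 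Correspondingly $(\overline{\L}\ldot\overline{\M}) = (\overline{\L}\ldot\Oa(H)) + (\overline{\L}\ldot\O(V))$, up to the archimedean term $\int_{\X(\C)} \log\|s\|^{-1} c_1(\overline{\L})$, which I would need to absorb into the first summand by choosing the metric on $\O(H)$ suitably (admissible), so that $(\overline{\L}\ldot\Oa(H))$ is precisely $[K:\Q]\deg(H_K)\, h_{\overline{\L}}(H)$.

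First I would handle the horizontal contribution: by definition of the height $h_{\overline{\L}}$ and the hypothesis $h_{\overline{\L}}(\mathrm{div}(s)^{hor}) \ge 0$, together with the fact that $\deg(\mathrm{div}(s)^{hor}_K) \ge 0$ since $H$ is effective, we get $(\overline{\L}\ldot\Oa(H)) \ge 0$. Here one should be slightly careful that $h_{\overline{\L}}$ was defined for irreducible effective divisors and extended by linearity, while $\mathrm{div}(s)^{hor}$ may be a sum of several irreducible horizontal divisors; but linearity of the height together with nonnegativity of each degree means the hypothesis (applied componentwise, or as stated for the whole horizontal part) still yields the bound. Second, for the vertical contribution, $V = \sum_k \sum_i n_{k,i}\Gamma_{k,i}$ with all $n_{k,i} \ge 0$, and relative semipositivity of $\overline{\L}$ says exactly that $(\overline{\L}\ldot\O(\Gamma_{k,i})) \ge 0$ for every irreducible vertical component; hence $(\overline{\L}\ldot\O(V)) = \sum_{k,i} n_{k,i}(\overline{\L}\ldot\O(\Gamma_{k,i})) \ge 0$. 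Adding the two nonnegative contributions gives $(\overline{\L}\ldot\overline{\M}) \ge 0$.

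The main obstacle is bookkeeping the archimedean part correctly: $\mathrm{div}(s)$ as an \emph{arithmetic} cycle carries a Green's current, and the clean splitting into "horizontal" and "vertical" intersection numbers is only valid once one fixes the admissible metric on $\O(\mathrm{div}(s)^{hor})$ so that the leftover archimedean term $\int \log\|s\|^{-1}_{\mathrm{adm-corrected}}\, c_1(\overline{\L})$ is nonnegative and gets attached to the horizontal height rather than floating free. Since $\|s\|_{\mathrm{sup}} \le 1$, the naive Green's function $-\log\|s\|^2 \ge 0$, and $c_1(\overline{\L})$ need not be a positive form, so one cannot simply say the integral is nonnegative; instead one must arrange things so this term is precisely what makes $(\overline{\L}\ldot\Oa(\mathrm{div}(s)^{hor}))$ equal to $[K:\Q]\deg\cdot h_{\overline{\L}}$ with the admissible metric. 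This is the standard device (cf. the manipulations in~\cite{ZhangPos}) and I would cite it rather than reprove it, but it is the one point requiring genuine care rather than routine linearity.
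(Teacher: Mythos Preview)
Your decomposition into horizontal, vertical, and archimedean contributions is exactly the paper's approach: it cites \cite[\S3.2.2]{BGS} for the formula
\[
(\overline{\L}\ldot\overline{\M}) \;=\; (\O(\mathrm{div}(s)^{\mathrm{ver}})\ldot\overline{\L}) \;+\; [K:\Q]\deg(\mathrm{div}(s)^{\mathrm{hor}}_K)\cdot h_{\overline{\L}}(\mathrm{div}(s)^{\mathrm{hor}}) \;-\; \int \log\|s\|\, c_1(\overline{\L})
\]
(up to the paper's normalisation) and then argues that each summand is separately nonnegative. Your treatment of the vertical and horizontal pieces is fine and matches the paper.

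Where your proposal breaks down is the archimedean term. You suggest absorbing $\int \log\|s\|^{-1}\,c_1(\overline{\L})$ into the horizontal height by ``choosing the metric on $\O(H)$ suitably (admissible)'', but this cannot work: the height $h_{\overline{\L}}(H)$ is \emph{defined} as $(\overline{\L}\ldot\Oa(H))/([K:\Q]\deg H_K)$ with the admissible metric already fixed, and the integral in question is precisely the discrepancy between the metric carried by $\overline{\M}$ and that admissible metric. It is a genuine third summand, not a bookkeeping artefact, and there is no ``standard device'' in \cite{ZhangPos} that makes it vanish into the height. The paper does not attempt any such absorption; it keeps the three terms separate and uses that $-\log\|s\|\ge 0$ (since $s$ is effective) together with $c_1(\overline{\L})\ge 0$ as a form. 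The latter is implicit in the paper's standing Arakelov convention, under which $c_1(\overline{\L})=\deg(\L)\cdot\mu$ with $\mu>0$; in the only application (to $\overline{\L}=\overline{\L_D}$, where $\deg\L_D=2g>0$) this is satisfied. So you were right that the archimedean term is the one point needing care, but the resolution is positivity of the curvature form, not an absorption trick.
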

\begin{proof} 
  According to~\cite[\S3.2.2]{BGS}, we have 
        \[\left(\overline{\L}\ldot \overline{\M}\right) =
\frac{\left(\O({\rm{div}(s)^{ver}})\;.\;\overline{\L}\right)}{[K:\Q]}+
          h_{\overline{\L}}({\rm{div}(s)^{hor}}) - \int \log\|s\|_{\sup}\;
      c_1(\overline{\L}).\]
Since  $\rm{div}(s)^{ver}$ is effective, the claim follows.
\end{proof}
Suppose we want to show that a hermitian line bundle $\overline{\L}$ on $\X$ satisfies
$\overline{\L}^2\ge0$. 
By Lemma~\ref{ImitateZhang}, it suffices to find some ample hermitian line bundle
$\overline{\M}$ on $\X$ such that under the assumption $\overline{\L}^2<0$ we have
$\left(\overline{\L}\ldot \overline{\M}_{a,b}\right)\ge 0$ whenever $\overline{\M}_{a,b}^2>0$.
If $\overline{\L}$ is horizontally semipositive, then we can take any ample
$\overline{\M}$
and any effective section $s$ of $\overline{\M}_{a,b}^{\otimes n}$ (which exists for $n\gg0$ by 
\cite[Theorem~2.1]{ZhangPos}, see~\cite[\S8]{ZhangPos}) and apply Lemma~\ref{Mlist}.
However, in the proof of Proposition~\ref{LDBound} we will apply Lemma~\ref{Mlist} to a hermitian line bundle
$\overline{\L}=\overline{\L_D}$
which is not in general horizontally semipositive, but only satisfies
$h_{\overline{\L_D}}(E)\ge 0$ if the Zariski closure $E_\X$ avoids a certain
finite set of points, 
 so we have to be more careful with our choice of $s$.
Lemma~\ref{AlphaExists} below tells us that, under the hypothesis $\overline{\L}^2<0$, we can find 
$\overline{\M}$ such that for $n\gg0$ there are many effective sections of
$\overline{\M}_{a,b}$ whenever $\overline{\M}_{a,b}^2>0$. 
Intuitively, it should be possible to find an effective section $s$ avoiding a finite set
of points if $\overline{\M}_{a,b}$ has enough effective sections.
Lemma~\ref{Mample} makes this intuition precise.
\begin{lemma}\label{AlphaExists}
Suppose $\overline{\L}$ is a relatively semipositive hermitian line bundle on
$\X$ such that $\deg(\L)>0$ and $\overline{\L}^2<0$. Then there exists an ample
hermitian line bundle $\overline{\M}$ on $\X$ such that for all $a,b \in
\mathbb{N}$ satisfying 
\begin{align*}
\overline{\M}_{a,b}^2 = \left(\overline{\M}^{\otimes a}\otimes \overline{\L}^{\otimes b}\right)^2 >0,  
\end{align*}
the lattice
$H^0\left(\X,{\M}_{a,b}^{\otimes n}\right)$ has a basis consisting of strictly effective global sections for some $n=n(a,b)\gg0$.
\end{lemma}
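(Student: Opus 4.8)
The plan is to take $\overline{\M}$ to be an ample hermitian line bundle on $\X$ and to show that for every pair $(a,b)\in\N^2$ with $\overline{\M}_{a,b}^2>0$ the hermitian line bundle $\overline{\M}_{a,b}=\overline{\M}^{\otimes a}\otimes\overline{\L}^{\otimes b}$ is itself ample in the sense of Definition~\ref{DefEff}; the conclusion of the lemma is then exactly the defining property of ampleness. The first step is the observation, essentially contained in the proof of Lemma~\ref{ImitateZhang}, that the hypothesis $\overline{\M}_{a,b}^2>0$ is very restrictive: writing $p(t)=\overline{\M}^2+2(\overline{\M}\ldot\overline{\L})\,t+\overline{\L}^2t^2$, we have $\overline{\M}^2>0$ by~\cite[Theorem~1.3]{ZhangPos} and $\overline{\L}^2<0$ by hypothesis, so $p$ has a unique positive root $t_0$, and $\overline{\M}_{a,b}^2=a^2p(b/a)>0$ forces $a\ge 1$ and $0\le b/a<t_0$. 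Thus we only ever have to deal with the ``compact family'' of hermitian line bundles $\overline{\M}+t\,\overline{\L}$ with $t\in[0,t_0)$, rescaled by $a$, and a single choice of $\overline{\M}$ has a chance of working for all admissible $(a,b)$ at once.

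Granting this, I would verify the hypotheses of Zhang's arithmetic Nakai--Moishezon/Hilbert--Samuel machinery (\cite[Theorem~2.1]{ZhangPos} together with~\cite[\S8]{ZhangPos}) for each $\overline{\M}_{a,b}$ with $\overline{\M}_{a,b}^2>0$. First, $\M_{a,b}$ is an ample line bundle on $\X$: since $a\ge 1$ and $\overline{\M}$ is ample while $\overline{\L}$ is relatively semipositive, we have $(\overline{\M}_{a,b}\ldot\Gamma_i)=a(\overline{\M}\ldot\Gamma_i)+b(\overline{\L}\ldot\Gamma_i)>0$ for every irreducible component $\Gamma_i$ of every special fiber, and $\deg(\M_{a,b})=a\deg(\M)+b\deg(\L)>0$; hence $\M_{a,b}$ is ample on every fiber, and therefore ample on $\X$, the base $\Spec\O_K$ being affine. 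Second, $\overline{\M}_{a,b}$ is relatively semipositive --- in fact $(\overline{\M}_{a,b}\ldot\Gamma_i)>0$ for all vertical $\Gamma_i$ --- and $\overline{\M}_{a,b}^2>0$ by hypothesis. Third, after fixing from the outset an $\overline{\M}$ whose curvature form $c_1(\overline{\M})$ is strictly positive (such $\overline{\M}$ exist), the curvature $c_1(\overline{\M}_{a,b})=a\,c_1(\overline{\M})+b\,c_1(\overline{\L})$ is again strictly positive, because $b/a<t_0$ and $c_1(\overline{\L})$ is semipositive (the admissible metrics entering the construction of $\overline{\L}$ have nonnegative curvature); in the general case one simply takes $c_1(\overline{\M})$ large enough compared to $t_0$.

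The last, and most delicate, point is \emph{horizontal positivity} of $\overline{\M}_{a,b}$, i.e.\ $h_{\overline{\M}_{a,b}}(E)>0$ for every horizontal prime divisor $E$ on $\X$; this is genuinely needed, since a hermitian line bundle some power of which has a basis of strictly effective sections has strictly positive height on every horizontal divisor. Now $h_{\overline{\M}_{a,b}}(E)=a\bigl(h_{\overline{\M}}(E)+(b/a)\,h_{\overline{\L}}(E)\bigr)$, and although $\overline{\L}$ need not be horizontally positive, its heights are bounded below (and are $\ge 0$ off a controlled locus) while $\overline{\M}$ ample gives $h_{\overline{\M}}(E)>0$; using once more that $b/a<t_0$, one chooses $\overline{\M}$ with large enough essential minimum that $h_{\overline{\M}}(E)+t\,h_{\overline{\L}}(E)>0$ for all $t\in[0,t_0)$ and all $E$ simultaneously. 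Having checked all hypotheses, Zhang's machinery gives that $\overline{\M}_{a,b}$ is ample, i.e.\ for $n=n(a,b)\gg0$ the lattice $H^0(\X,\M_{a,b}^{\otimes n})$ has a basis of strictly effective global sections, which is the assertion of the lemma. I expect the main obstacle to be precisely this uniformity step: extracting a \emph{single} $\overline{\M}$ that makes Zhang's criterion applicable to the whole family $\{\overline{\M}_{a,b}:\overline{\M}_{a,b}^2>0\}$, for which the boundedness $b/a<t_0$ --- bought by the hypothesis $\overline{\M}_{a,b}^2>0$ --- is the essential input.
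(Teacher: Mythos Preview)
Your overall strategy coincides with the paper's: fix an ample $\overline{\M}$, apply Zhang's ampleness criterion to each $\overline{\M}_{a,b}$ with $\overline{\M}_{a,b}^2>0$, and note that relative semipositivity and ampleness of the underlying sheaf are automatic, so the only issue is horizontal positivity. The paper invokes \cite[Theorem~1.5]{ZhangPos} directly for this reduction; no curvature hypothesis is needed, so your paragraph about $c_1(\overline{\M}_{a,b})$ is a detour, and your claim that $c_1(\overline{\L})$ is semipositive is not part of the lemma's hypotheses.

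The genuine gap is exactly the point you flag as the ``main obstacle'': you assert that one can choose $\overline{\M}$ with large enough essential minimum so that $h_{\overline{\M}}(E)+t\,h_{\overline{\L}}(E)>0$ for all $t\in[0,t_0)$ and all $E$, but you do not verify this, and the difficulty is real because $t_0$ depends on $\overline{\M}$. Enlarging $\overline{\M}$ (say by scaling the metric) increases the essential minimum but also moves $t_0$, and a priori the two effects could cancel. The paper resolves this by replacing $\overline{\M}$ with $\overline{\M}(\alpha)$, writing out the required inequality $t_0(\alpha)\ge m_\L/(m_\M+\alpha)$ explicitly, squaring, and reducing to the positivity of a cubic polynomial $w(\alpha)$. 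The leading coefficient of $w$ turns out to be $-2\deg(\M)\bigl(\overline{\L}^2+2m_\L\deg(\L)\bigr)$, and it is precisely here that the hypothesis $\deg(\L)>0$ (together with $\overline{\L}^2<0$) is used to conclude that $w(\alpha)>0$ for $\alpha\gg0$. You never use $\deg(\L)>0$ anywhere, which is a sign that the argument is incomplete: without that hypothesis the uniform choice of $\overline{\M}$ need not exist.
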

\begin{proof}
Let $\overline{\M}$ be an ample hermitian line bundle on $\X$. We will scale
$\overline{\M}$ by $\alpha\in\Q_{\ge0}$ such that the lemma holds for
$\overline{\M}(\alpha)$. By~\cite[Theorem~1.5]{ZhangPos}, it suffices to show
that $\overline{\M}_{a,b}(\alpha)^2>0$ implies that  $\overline{\M}_{a,b}(\alpha)$ is horizontally positive, since relative semipositivity is automatic.
Let $p_\alpha(t)$ denote the polynomial
\[
        p_\alpha(t)= 
        \overline{\L}^2+2\left(\overline{\L}\ldot
    \overline{\M}(\alpha)\right)t+\overline{\M}(\alpha)^2t^2.
\]
As in the proof of Lemma~\ref{ImitateZhang}, there is some positive real number $t_0=t_0(\alpha)$ such that $p_\alpha(t_0)=0$ and $p_\alpha(t)>0$ for all $t>t_0$.
Now let $m_\L=-\inf_{D}h_{\overline{\L}}(D)$ and $m_\M=\inf_{D}h_{\overline{\M}}(D)$,
where we take the infima over all irreducible divisors $D$ on $X$. If $m_\L\ge0$, then we
can take $\alpha=0$, so we may assume that $m_\L<0$.
Let $D$ be some irreducible divisor on $X$. We will construct $\alpha$ such that
\begin{equation}\label{Ht0}
h_{\overline{\M}_{a,b}(\alpha)}(D)\ge0
\end{equation}
whenever $a/b>t_0(\alpha)$, which will prove the lemma.
Because of
\begin{align*}
h_{\overline{\M}_{a,b}(\alpha)}(D)&=ah_{\overline{\M}(\alpha)}(D)+bh_{\overline{\L}}(D)\\
                                                                                                                                 &\ge
am_\M+a\alpha-bm_\L,
\end{align*}
we need a nonnegative $\alpha$ such that $\frac{a}b\ge t_0(\alpha)\Rightarrow\frac{a}b\ge
\frac{m_\L}{\alpha+m_\M}$, so $\alpha$ must satisfy
\[
        t_0(\alpha)\ge\frac{m_\L}{\alpha+m_\M}. 
\]
Hence~\eqref{Ht0} is easily seen to follow from
\begin{equation}\label{AlphaIneq}
(\alpha+m_\M)r(\alpha)\ge
m_\L\left(\overline{\M}^2+2\alpha\deg(\M)\right)+(\alpha+m_\M)\left((\overline{\M}\ldot
\overline{\L})+\alpha\deg(\L)\right),
\end{equation}
where 
\[
        r(\alpha)=\sqrt{\alpha^2\deg(\L)^2+2\alpha\deg(\M)\left((\overline{\M}\ldot
        \overline{\L})-\overline{\L}^2\right)+(\overline{\M}\ldot
\overline{\L})^2-\overline{\L}^2\overline{\M}^2}.
\]
Note that $r(\alpha)$ is real since $p_\alpha$ always has real roots. Hence the left hand side of~\eqref{AlphaIneq} is always nonnegative. If the right hand side is negative for some $\alpha$, then~\eqref{AlphaIneq} holds and we are done, so we may assume that the right hand side is also nonnegative.
We find that~\eqref{AlphaIneq} holds if and only if $w(\alpha)\ge0$, where $w$ is a cubic polynomial in $\alpha$, obtained by subtracting the square of the right hand side of~\eqref{AlphaIneq} from the square of the left hand side. The leading coefficient of $w$ is 
\[
-2\deg(\M)(\overline{\L}^2+2m_\L\deg(\L))
\]
which is positive by our assumptions on $\overline{\L}$. Hence~\eqref{AlphaIneq} holds for $\alpha\gg0$.
\end{proof}
\begin{lemma} \label{Mample}
        Let $\overline{\M}$ be an ample hermitian line bundle on $\X$ and let
$P_1,\ldots,P_r$ be closed points on $\X$. 
Then for some $n\gg 0$ there exists a strictly effective global section $s$ of
$\M^{\otimes n}$ such that $s(P_i)\ne 0$ for $i=1,\ldots,r$. 
\end{lemma}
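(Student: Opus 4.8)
The plan is to produce the required section as a short integer combination of a $\Z$-basis of global sections of a high power of $\M$, controlling its sup-norm by slightly sharpening the ampleness hypothesis. The first step is to rephrase the nonvanishing conditions in terms of evaluation maps: for a closed point $P$ on $\X$ the fibre $\M^{\otimes n}|_P$ is a one-dimensional vector space over the finite residue field $\kappa(P)$, and $s(P)\neq 0$ means exactly that $s$ has nonzero image under $\mathrm{ev}_P\colon H^0(\X,\M^{\otimes n})\to\M^{\otimes n}|_P$. Writing $G=G_n:=\bigoplus_{i=1}^r\M^{\otimes n}|_{P_i}$, what we want is a strictly effective section $s\in H^0(\X,\M^{\otimes n})$ whose image in $G$ has a nonzero component in every summand; note that $G$ is a finite abelian group with $\#G=\prod_{i=1}^r\#\kappa(P_i)=:Q$, a quantity independent of $n$.

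Next I would record two facts, both valid for $n\gg 0$. Since $\M$ is ample, Serre vanishing applied to $\M^{\otimes n}$ twisted by the ideal sheaf of the reduced closed subscheme $\{P_1,\dots,P_r\}$ shows that the evaluation map $\mathrm{ev}\colon H^0(\X,\M^{\otimes n})\to G$ is surjective; moreover $H^0(\X,\M^{\otimes n})$ is then locally free over $\O_K$ of rank $h^0(X,\M_K^{\otimes n})=n\deg(\M_K)+1-g$, so its rank $N=N_n$ over $\Z$ grows linearly in $n$. Secondly I would fix a real number $\eps>0$, small enough that the hermitian line bundle $\overline{\M}'$ obtained from $\overline{\M}$ by multiplying the metric by $e^{\eps}$ at each archimedean place is still ample; ampleness is stable under sufficiently small perturbations of the metric, cf.~\cite{ZhangPos}. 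By Definition~\ref{DefEff} applied to $\overline{\M}'$, for $n\gg 0$ the lattice $H^0(\X,\M^{\otimes n})$ has a $\Z$-basis $e_1,\dots,e_N$ of sections that are strictly effective for $(\overline{\M}')^{\otimes n}$, i.e.\ with $\|e_j\|_{\sup}<e^{-n\eps}$ measured in the metric of $\overline{\M}$.

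Fixing $n$ large enough for all of the above, I would conclude as follows. Choose any $g^*\in G$ having a nonzero component in every summand. Since $\mathrm{ev}$ is surjective, the elements $\mathrm{ev}(e_1),\dots,\mathrm{ev}(e_N)$ generate the finite group $G$, so we can write $g^*=\sum_{j=1}^N a_j\,\mathrm{ev}(e_j)$ with integers $0\le a_j<\exp(G)\le Q$. Put $s=\sum_{j=1}^N a_je_j\in H^0(\X,\M^{\otimes n})$. Then $\mathrm{ev}(s)=g^*$, so $s\neq 0$ and $s(P_i)\neq 0$ for all $i$, while
\[
\|s\|_{\sup}\;\le\;\sum_{j=1}^N|a_j|\,\|e_j\|_{\sup}\;<\;N\,Q\,e^{-n\eps}.
\]
Since $N=N_n$ grows only linearly in $n$ whereas $e^{-n\eps}$ decays geometrically, the right-hand side is $<1$ as soon as $n$ is large enough; for such $n$ the section $s$ is strictly effective and does not vanish at any $P_i$, which is what we wanted.

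The step I expect to be the real obstacle is the quantitative norm estimate: a basis of strictly effective sections only satisfies $\|e_j\|_{\sup}<1$, which is too weak to control the norm of an integer combination whose length grows with $n$. This is exactly why one must replace $\overline{\M}$ by the slightly less positive --- but still ample --- bundle $\overline{\M}'$, so that the basis sections obey $\|e_j\|_{\sup}\le e^{-\eps n}$. By contrast, the surjectivity of the evaluation map and the linear growth of the rank $N_n$ are routine consequences of Serre vanishing and Riemann--Roch on the generic fibre.
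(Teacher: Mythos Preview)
Your proof is correct and follows a genuinely different route from the paper's. The paper proceeds directly: after replacing $\overline{\M}$ by a very ample power, it selects strictly effective sections $s_1,\dots,s_k$ such that each $P_i$ lies in the nonvanishing locus of some $s_j$, and sets $s'=\sum_{j} s_j^n$ for large even $n$, so that $\|s'\|_{\sup}\le k\max_j\|s_j\|_{\sup}^n<1$. You instead encode the nonvanishing constraints as a surjection onto the finite group $G=\bigoplus_i\M^{\otimes n}|_{P_i}$, lift a chosen element of $G$ with all components nonzero to a bounded integer combination of a $\Z$-basis, and obtain the geometric norm decay from the ampleness of the small twist $\overline{\M}'=\overline{\M}\otimes\Oa(-\eps\X_\infty)$. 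Your argument is more robust at precisely the point where the paper's is delicate: the assertion that $s'(P_i)=\sum_j s_j(P_i)^n\ne 0$ for even $n$ is not justified when $\kappa(P_i)$ is a finite field (sums of $n$-th powers can vanish in $\F_q$ even when not all summands do), whereas you hit a prescribed nonzero target in each factor of $G$ by construction. The price you pay is the auxiliary input that arithmetic ampleness survives a small constant rescaling of the metric; this does follow from the results of~\cite{ZhangPos}, but you could also obtain the bound $\|e_j\|_{\sup}<e^{-n\eps}$ more elementarily by taking degree-$k$ monomials in a fixed strictly effective basis of some $H^0(\X,\M^{\otimes n_0})$, which is closer in spirit to the paper's use of powers and avoids invoking the openness of ampleness.
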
 
\begin{proof} Without loss of generality we assume that $\overline{\M}$ is very ample.
Since $\overline{\M}$ is very ample as a hermitian line bundle, the lattice $H^0(X,\M)$ is spanned
by strictly effective sections $s_1,\ldots,s_m$.
Moreover, since $\M$ is also very ample in the geometric setting, it is globally
generated, i.e., for each point $P_i$ there exists at least one section $s_j$
such that $s_j(P_i) \ne 0$. Let
$\{s_1,\ldots,s_k\}$ be a minimal set of such sections. 
Then for every even $n$ the global section 
$s'=\sum^k_{j=1} s_j^n$  of $\M^{\otimes n}$ 
satisfies  $s'(P_i)\ne 0$ for all $i=1,\ldots,r$. 
But now for some even $n\gg0$  
 the global section $s'=\sum^k_{j=1}  s_j^n$ 
also satisfies the condition
\[\|s'\|_{\rm sup} \le  k \max_j \|s_j\|^n  <1.\]
\end{proof}
\section{Heights and intersections}\label{hi}
We keep the notation of the previous section.
If $F$ is a finite extension of $K$, then we let
$\varphi_F=\mathrm{pr}_1\circ\pi_F:\X^F\to\X$, where
$\pi_F:\X^F\to\X\times\O_F$ denotes the minimal desingularization. 
For the definition of semistable arithmetic surfaces we refer to~\cite{LiuStable}; in particular, we do
not require a semistable arithmetic surface to be minimal.
\begin{lemma}\label{Liul}(Liu,~\cite{LiuStable})
  There exists a finite extension $F_0/K$ such that

  $\X^{F}$ is semistable for every finite extension $F/F_0$.  
\end{lemma}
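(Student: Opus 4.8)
The plan is to deduce this from the semistable reduction theorem for curves, cf.\ \cite[Ch.~10]{LiuBook}, combined with the elementary remark that the operations used to pass from a stable model to $\X^F$ cannot destroy semistability. Concretely, I would assemble three ingredients: (a) the existence of a \emph{single} finite extension $F_0/K$ over which $X$ acquires semistable (equivalently, since $g>1$, stable) reduction; (b) the fact that, once attained over $F_0$, semistability of the minimal regular model persists over every finite extension $F/F_0$; and (c) the fact that $\X^F$, although a priori neither minimal nor equal to the chosen stable model, is obtained from the minimal regular model of $X_F$ over $\O_F$ by a sequence of blow-ups of closed points, each of which preserves semistability.

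For (a) I would invoke the semistable reduction theorem \cite[Theorem~10.4.3]{LiuBook}: since $g>1$, there is a finite separable $F_0/K$ such that $X_{F_0}$ admits a stable model $\mathcal Z$ over $\O_{F_0}$. For (b), fix $F/F_0$ and localize at a prime of $\O_F$. The stable model commutes with base change, so $\mathcal Z\times_{\O_{F_0}}\O_F$ is the stable model of $X_F$ over $\O_F$; away from the nodes of the fibres it is smooth over $\O_F$, and at a node, where $\mathcal Z$ is \'etale-locally $xy=\varpi^m$ for a uniformizer $\varpi$ of $\O_{F_0}$ and an integer $m\ge 1$, writing $\varpi=u\varpi_F^{e}$ with $e$ the ramification index gives $xy=u\varpi_F^{em}$, an $A_{em-1}$-singularity of the total space. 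Resolving each such singularity inserts a chain of $\mathbb{P}^1$'s of self-intersection $-2$ between the two branches; this leaves the special fibre reduced with normal crossings and creates no $(-1)$-curves, so the minimal desingularization of $\mathcal Z\times_{\O_{F_0}}\O_F$ is relatively minimal and semistable. Hence the minimal regular model of $X_F$ over $\O_F$ is semistable.

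It remains to carry out (c). By definition $\X^F$ is the minimal desingularization of $\X\times_{\O_K}\O_F$, hence a regular proper model of $X_F$; since $g>1$ it admits a birational morphism onto the minimal regular model $\X^{F,\min}$ of $X_F$ over $\O_F$, factoring as a composition of blow-ups of closed points \cite[Ch.~9]{LiuBook}. Now if $\mathcal Y/\O_F$ is semistable and $P\in\mathcal Y$ is a closed point, then either $P$ lies on the smooth locus of a fibre, in which case blowing up attaches one $(-1)$-curve meeting its component transversally in a single point, or $P$ is a node, in which case blowing up replaces the node by a $(-1)$-curve meeting each of the two branches transversally in one point; in both cases the new special fibre is again reduced with normal crossings, so the blow-up is again semistable. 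Applying this to each blow-up in the factorization $\X^F\to\X^{F,\min}$ and invoking (b) shows that $\X^F$ is semistable. The genuinely nontrivial content is (a)--(b), i.e.\ the semistable reduction theorem and its stability under base change, for which I would rely on \cite{LiuBook} and \cite{LiuStable} rather than reprove anything; step (c) is purely formal and is the part specific to our setting, where $\X$ need be neither minimal nor semistable.
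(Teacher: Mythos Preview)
The paper does not supply a proof of this lemma; it simply records it as a result of Liu~\cite{LiuStable}. Your attempt to derive it from the semistable reduction theorem has a genuine gap in step~(c). You assert that blowing up a closed point of a regular semistable model preserves semistability; in the nodal case you describe the exceptional curve as a $(-1)$-curve meeting the two branches transversally, and conclude that the new fibre is again reduced. The configuration is right, but the multiplicity is not: at a node the model is \'etale-locally $xy=\varpi_F$, so $\varpi_F\in\mathfrak m_P^2$ and the exceptional divisor appears in the special fibre with multiplicity~$2$. The blow-up is therefore not semistable.

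This is not a technicality one can patch; it shows that your $F_0$---merely a field of semistable reduction for $X$---is in general too small. Take $\X$ to be the blow-up of a minimal semistable model of $X/\O_K$ at a node: then $X$ already has semistable reduction over $K$, so your step~(a) yields $F_0=K$, yet $\X^K=\X$ (it is already regular) carries a multiplicity-$2$ component and is not semistable. Liu's actual result is finer: the extension $F_0$ depends on the given model $\X$, in particular on the multiplicities of its fibre components, and one needs enough ramification at each bad prime so that the minimal desingularization of $\X\times_{\O_K}\O_F$ becomes reduced and nodal. The requisite local analysis (resolving the cyclic-quotient type singularities produced by base-changing a regular arithmetic surface along a ramified extension) is carried out in~\cite{LiuStable} and does not reduce to the semistable reduction theorem plus the formal blow-up bookkeeping you outline.
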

\begin{defn}\label{DefT}
Let $F_0/K$ be as in Lemma~\ref{Liul}.
We denote the smooth locus of $\X^{F_0}$ by $\X^{F_0}_{\mathrm{sm}}$ and we denote the
exceptional locus of $\varphi_{F_0}$ by $\mathrm{Exc}(\varphi_{F_0})$.
With this notation we define 
\[
  T(\X)=\varphi_{F_0}\left(\X^{F_0}\setminus\X^{F_0}_{\mathrm{sm}}\;\cup\;
  \mathrm{Exc}(\varphi_{F_0})\right).
  \]
\end{defn}
\begin{rk}
  If $\X$ is semistable, then we have
  $T(\X)=\X\setminus\X_{\mathrm{sm}}$.
\end{rk}
\begin{lemma}\label{FiniteT}
Every irreducible divisor $E$ on $X$ such that $\supp(E_\X)\cap T(\X)=\emptyset$ satisfies 
$\varphi^*_F E_\X=E_{\X^F}$ for every finite extension
$F/F_0$.
\end{lemma}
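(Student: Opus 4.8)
The plan is to exploit the fact that the set $T(\X)$ was designed precisely so that, away from it, the base change $\varphi_F$ is as simple as possible. Since the Zariski closure $E_\X$ is closed, integral and equal to its own support, the hypothesis $\supp(E_\X)\cap T(\X)=\emptyset$ gives $E_\X\subseteq U:=\X\setminus T(\X)$, so the proof reduces to two claims: (i) over $U$ the morphism $\varphi_F$ is the finite flat base-change morphism $q\colon U\times_{\O_K}\O_F\to U$; and (ii) $[\,E_\X\times_{\O_K}\O_F\,]=E_{\X^F}$ as cycles on $\X^F$. Granting these, flat pullback of Cartier divisors gives $\varphi_F^*E_\X=q^*(E_\X)=[\,E_\X\times_{\O_K}\O_F\,]=E_{\X^F}$.

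For (i): since $X$ is smooth over $K$, the scheme $\X\times_{\O_K}\O_{F_0}$ is normal (it is Cohen--Macaulay, being flat over $\O_{F_0}$ with Cohen--Macaulay fibres, and it is regular in codimension one because $X\times_K F_0$ is smooth and the remaining codimension-one points have discrete valuation rings as local rings). Hence $\pi_{F_0}$ is an isomorphism exactly over the regular locus of $\X\times_{\O_K}\O_{F_0}$, so that $\varphi_{F_0}\bigl(\mathrm{Exc}(\varphi_{F_0})\bigr)=\mathrm{pr}_1\bigl(\mathrm{Sing}(\X\times_{\O_K}\O_{F_0})\bigr)$; combined with $\varphi_{F_0}\bigl(\X^{F_0}\setminus\X^{F_0}_{\mathrm{sm}}\bigr)\subseteq T(\X)$, the definition of $T(\X)$ shows that above $U$ the scheme $\X\times_{\O_K}\O_{F_0}$ is regular, coincides with $\X^{F_0}$, and lies in the smooth locus $\X^{F_0}_{\mathrm{sm}}$ --- that is, $\X\times_{\O_K}\O_{F_0}$ is smooth over $\O_{F_0}$ above $U$. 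As smoothness is stable under base change and $\X\times_{\O_K}\O_F=(\X\times_{\O_K}\O_{F_0})\times_{\O_{F_0}}\O_F$, the scheme $\X\times_{\O_K}\O_F$ is smooth, in particular regular, over $\O_F$ above $U$; therefore $\pi_F$ is an isomorphism over $\mathrm{pr}_1^{-1}(U)$, and $\varphi_F$ restricts there to $q$, which is finite and flat because $\O_K\to\O_F$ is.

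For (ii): as $E_\X$ is an integral horizontal divisor it is finite and flat over $\O_K$, so $E_\X\times_{\O_K}\O_F$ is finite and flat over $\O_F$ and therefore has no vertical component; its irreducible components are the Zariski closures of those of its generic fibre $E\times_K F=\Spec\bigl(\kappa(E)\otimes_K F\bigr)$. Since $\mathrm{char}\,K=0$, the $F$-algebra $\kappa(E)\otimes_K F$ is étale, a finite product $\prod_i L_i$ of fields, corresponding to the decomposition $E_F=\coprod_i E_i$ into irreducible divisors $E_i=\Spec L_i$ on $X_F$; thus the components of $E_\X\times_{\O_K}\O_F$ are exactly the $(E_i)_{\X^F}$, and each appears with multiplicity one because the local ring of $E_\X\times_{\O_K}\O_F$ at the generic point $\eta_i$ of $(E_i)_{\X^F}$ is the field $L_i$, which is the residue field of $\eta_i$ on $\X^F$. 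As $E_{\X^F}=\sum_i(E_i)_{\X^F}$ by the linear extension of the closure convention, this proves (ii).

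The hard part will be claim (i), and within it the point that $E_\X\cap T(\X)=\emptyset$ genuinely forces $\X\times_{\O_K}\O_{F_0}$ to be smooth over $\O_{F_0}$ along $E_\X$; this is exactly what the definition of $T(\X)$ is engineered to deliver, and it rests on the normality of $\X\times_{\O_K}\O_{F_0}$, so that the exceptional locus of $\pi_{F_0}$ detects \emph{all} of its singular points. A smaller subtlety in (ii) is that $E_\X\times_{\O_K}\O_F$ genuinely fails to be reduced when $E_\X\to\Spec\O_K$ is ramified, so one must note --- as above --- that this non-reducedness is supported in codimension $\ge 2$ on the regular surface $\X^F$ and hence does not contribute to the associated cycle.
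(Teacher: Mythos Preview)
Your argument for claim (i) has a genuine gap, and in fact the conclusion you draw from it is false in the generality of the lemma. You assert that $\X\times_{\O_K}\O_{F_0}$ is normal because ``the remaining codimension-one points have discrete valuation rings as local rings'', but this is not true when $\O_K\to\O_{F_0}$ is ramified at a prime $\p$ and $\X_\p$ has a component of multiplicity $m>1$: writing $t$ for a uniformizer of $\O_{\X,\eta}$ and $e$ for the ramification index, the completed local ring of $\X\times\O_{F_0}$ at the corresponding codimension-one point is $\hat\O_{\X,\eta}[\pi]/(\pi^e-ut^m)$ for some unit $u$, and for instance when $e=m=2$ and $u$ happens to be a square this ring has zero-divisors. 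More decisively, your conclusion that $\X\times\O_{F_0}$ is \emph{smooth} over $\O_{F_0}$ above $U$ is equivalent (since smoothness descends along the faithfully flat map $\O_K\to\O_{F_0}$) to $\X$ being smooth over $\O_K$ on all of $U$; but $T(\X)$ is a finite set of closed points, so $U$ meets every fibre component in a dense open, and $\X$ is \emph{not} smooth along components of multiplicity $>1$. This is exactly the situation of the Fermat curves treated in Section~\ref{Fermat}. Hence $\varphi_F$ does not in general restrict to the base-change morphism $q$ over $U$, and the route through (i) does not work.

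The paper's proof avoids this by never attempting to identify $\varphi_F$ over $U$. Instead it observes that any discrepancy between $\varphi_F^*E_\X$ and $E_{\X^F}$ must be a vertical component $\Gamma$ contracted by $\varphi_F$ to a point of $E_\X$, and then shows $\varphi_F(\Gamma)\in T(\X)$ via the factorization $\X^F\to\X^{F_0}\times_{\O_{F_0}}\O_F\to\X\times_{\O_K}\O_F$. This factorization is available because $\X^{F_0}$ is semistable, so $\X^{F_0}\times\O_F$ is normal with only $A_n$ singularities over the nodes of $\X^{F_0}$; then either $\Gamma$ is contracted to such a singular point (lying over a node of $\X^{F_0}$, hence mapping into $T(\X)$), or $\Gamma$ maps onto an exceptional curve of $\varphi_{F_0}$, whose image again lies in $T(\X)$. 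Your claim (ii) and its justification are correct and are implicitly used in the paper's argument as well (to see that the horizontal part of $\varphi_F^*E_\X$ is exactly $E_{\X^F}$).
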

\begin{proof}
Let $E$ be an irreducible  divisor on $X$ whose closure $E_\X$ does not
contain an element of $T(\X)$ in its support and let $F/K$ be a finite
extension containing $F_0$.
Note that if $\varphi^*E_\X\ne E_{\X^F}$, then there is an
irreducible component $\Gamma\subset\mathrm{Exc}(\varphi_F)$ such that
$\varphi_F(\Gamma)\in\supp(E_\X)$.
But this means that either $\Gamma\subset\mathrm{Exc}(\varphi_{F_0})$, implying
that $\varphi_{F}(\Gamma)\in T(\X)$, or $\Gamma$ is contracted
to a point by the desingularization morphism
$\X_{F}\to\X_{F_0}\times\O_F$.
In this case $\Gamma$ maps to a singular point of
$\X_{F_0}\times\O_F$, whence $\varphi_{F}(\Gamma)\in T(\X)$.
\end{proof}
 
Let $h_{\NT}$ denote the N\'eron-Tate height on the Jacobian $J$ of $X$ with respect to the symmetrized theta divisor $\Theta+[-1]^*\Theta$.
       For each divisor $D\in\Div_\Q(X)$ of degree one, let $j_D:X\hookrightarrow J$ be
       the embedding which maps a point $Q\in X$ to the class of $Q-D$. 
\begin{defn}\label{LDDef}
   Let $D \in \Div_\Q(X)$ have degree one. 
    For every non-archimedean place $v$ of $K$ we define $D_v = D \times K^{\mathrm{nr}}_v$,
    where $K^{\mathrm{nr}}_v$ is the maximal unramified extension of the completion of $K$ at 
$v$ and we fix the proper regular
model $\X\times\O^{\mathrm{nr}}_v$ of $X$ over  the ring $\O^{\mathrm{nr}}_v$ of
    integers of $K^{\mathrm{nr}}_v$. 
    With these choices, we define vertical divisors
    \[
        V_D=\sum_v V_{D_v}
    \]
    and 
    \[
        U_D=\sum_v U_{D_v}
    \]
    on $\X$,  where both sums are over all non-archimedean places of $K$.
    Moreover, we define a hermitian line bundle $\overline{\L_D}$ on $\X$ by
\[
\overline{\L_D}=\om\otimes\Oa(2D_\X)\otimes\O(U_D)^{-1}\otimes\Oa(-a\X_\infty),
\]
where $a=\Oa(D_\X)^2-\O(V_D)^2\in\R$.
Here $\Oa(D_\X)^2$ is the self-intersection of
$\O(D_\X)$, equipped with the Arakelov metric, and
$\Oa(-a\X_\infty)=(\O_\X,|\cdot|e^a)$.
Finally, we set
\[
    \beta_D = \sum_v \beta_{D_v}.
\]
\end{defn}
Unless otherwise stated, all metrics will be Arakelov metrics (so that the Arakelov
adjunction formula holds, cf.~\cite[\S IV.5]{LangArak}), except for vertical line bundles,
which are equipped with the trivial metric. 
Now we prove Theorem~\ref{HtNonNeg}, stating that the height with respect to $\overline{\L_D}$ is closely
related to the N\'eron-Tate height on the Jacobian of $X$.
\begin{proof}[Proof of Theorem~\ref{HtNonNeg}]
Let $E$ be an irreducible divisor on $X$ such that
$\supp(E_\X)\cap T(\X)=\emptyset$.
Let $F_0$ be as in Lemma~\ref{Liul} and let $F/K$ be a finite extension
containing $F_0$ such that $E$ has pointwise $F$-rational support.
Let $e=\deg(E)$ and $E=\sum^e_{j=1}(P_j)$, where $P_j\in X(F)$. 
By Lemma~\ref{FiniteT} we have 
\begin{equation}\label{FPullBack}
  \varphi_F^*E_\X=E_{\X^F}\;\textrm{ and }\;
  \varphi_F^*D_\X=D_{\X^F}.  
\end{equation}
Hence we get, using~\cite[Theorem~III.4.5]{LangArak},
\begin{equation}\label{phipb}
  \Phi_{\X^F}(E-eD)=\varphi_{F}^*(\Phi_\X(E-eD)),
\end{equation}
where, if $Z \in \Div_\Q(X)$ has degree~0, $\Phi_\X(Z) \in \Div_\Q(\X)$ is a vertical 
divisor such that $Z_\X+\Phi_\X(Z)$ has
trivial intersection multiplicity with all vertical divisors on $\X$.
In our notation, we have $\Phi_\X(Z) = \Phi_{\X\times\O^{\mathrm{nr}}_v}(Z_v)= \sum_v V_{Z_v}$, see~\eqref{phiv}.
Expanding the left hand side of~\eqref{phipb}, we find
\[
    \Phi_{\X^F}(E-eD)=\sum^e_{j=1} \Phi_{\X^F}(P_j-D)
\]
and Proposition~\ref{udhor} implies that
\begin{equation}\label{EPhi}
        (\Oa(E_\X)\ldot  \O(U_D))
        =e\O(V_D)^2-\sum^e_{j=1}\O\left(\Phi_{\X^F}(P_j-D)\right)^2.
\end{equation}
This allows us to compare $h_{\overline{\L_D}}$ to $h_\mathrm{NT}$. We will use
the Hodge Index Theorem on arithmetic surfaces due to Faltings and Hriljac (see for instance~\cite[\S III.5]{LangArak})
which implies that if $P\in J(K)$, then we have 
\begin{equation}\label{FaltHril}
h_{\NT}(P) [K:\Q]=-\left(\Oa(Z_\X)\otimes\O(\Phi_\X(Z))\ldot
\Oa(Z_\X)\right)=-\Oa(Z_\X)^2+\O(\Phi_\X(Z))^2
\end{equation}
for every  divisor $Z$ of degree zero on $X$ such that $Z$ represents $P$. 
Setting $a'=-\Oa(D_\X)^2$, we get
\begin{align*}
        &\sum^e_{j=1}h_{\NT}(j_D(P_j))[K:\Q]
        =\sum^e_{j=1}
        \left(-\Oa\left(P_{j,\X^F}-D_{\X^F}\right)^2+\O\left(\Phi_{\X^F}(P_{j}-D)\right)^2\right)\\
        &=\sum^e_{j=1}\left(
        -\Oa\left(P_{j,\X^F}\right)^2+2\left(\Oa(P_{j,\X^F})\ldot
        \Oa(D_{\X^F})\right)+\O\left(\Phi_{\X^F}(P_{j}-D)\right)^2\right)-e
        a'\\
        &=\sum^e_{j=1}\left(
        \left(\Oa(P_{j,\X^F})\ldot 
        \om_{\X^F}\otimes
        \Oa(2D_{\X^F})\right)+\O\left(\Phi_{\X^F}(P_{j}-D)\right)^2\right)-e
        a'\\
        &=(\Oa(E_{\X^F})\ldot 
        \om_{\X^F}\otimes
        \Oa(2D_{\X^F}))+\sum^e_{j=1}
        \O\left(\Phi_{\X^F}(P_{j}-D)\right)^2-e a'\\
        &=(\Oa(E_{\X})\ldot  \om\otimes \Oa(2D_{\X}))-(\Oa(E_\X)\ldot 
        \O(U_D))-e a'+e\O(V_D)^2\\
        &=(\Oa(E_\X)\ldot  \overline{\L_D})\\
        &=e[K:\Q] h_{\overline{\L_D}}(E).
\end{align*}
Here  the first equality holds by~\eqref{FaltHril}, the third equality holds because of
the Arakelov adjunction formula (see~\cite[\S IV.5]{LangArak}) and the fifth equality
holds because of~\eqref{FPullBack},~\eqref{EPhi} and because, by assumption,
$E_{\X^l}$ does not intersect any vertical divisors contracted by $\varphi_F$. 
The first assertion of the proposition is now immediate since the N\'eron-Tate height only takes
nonnegative values. 
The second assertion follows if we put $E=(P)$, where $P \in X(K)$.
\end{proof}
\begin{rk}
If $X$ is a smooth projective geometrically irreducible  curve  defined over an archimedean 
local field
and $E_1,E_2 \in \Div(X)$ have disjoint support, then we set  $[E_1,E_2] = (E_1, E_2)_a$, 
where the latter denotes the
admissible pairing on $X$, see \cite[\S4.5]{ZhangAdmissible}. 
Now suppose that $X$ is defined over a number field $K$.
We can use~\eqref{defpairing} and Corollary~\ref{pairing} to define a pairing on 
divisors $E_1, E_2$ on $X$ with disjoint support as
\[
  [E_1,E_2]=\sum_v[E_{1,v},E_{2,v}]_v,
\]
where $E_{i,v} = E_{i}\times_K K_v$ for archimedean $v$ and the sum is over all places of $K$.
This global pairing has the following  properties, which may be of independent
interest:
\begin{itemize}
  \item[(i)] $[\cdot,\cdot]$ is bilinear and symmetric.
  \item[(ii)] $[E_1,\mathrm{div}(f)]=0$ for any $f\in K(X)^*$. Hence $[\cdot,\cdot]$
    induces a well-defined pairing on divisor classes.
  \item[(iii)] If $\deg(E_1)=\deg(E_2)=0$, then we have
    $[E_1,E_2]=-(E_1,E_2)_{\NT}$, where the latter is the N\'eron-Tate height
    pairing.
  \item[(iv)] If $E_1$ and $E_2$ are canonical divisors on $X$, then we have
    $[E_1,E_2]=\om^2$. 
\end{itemize}
\end{rk}
\section{Proofs of Proposition~\ref{LDBound}, Theorem~\ref{thm-main} and Theorem~\ref{thm-simple}}\label{Bds}
We finally get to our original problem, namely the derivation of lower bounds on $\om^2$.
We first prove Proposition~\ref{LDBound}.
If $\overline{\M}$ is a hermitian line bundle on $\X$ and $a,b$ are positive integers, then we set
\[
\overline{\M}_{a,b}=\overline{\M}^{\otimes a}\otimes\overline{\L_D}^{\otimes b}.
\]
We want to use Lemma~\ref{ImitateZhang} to prove Proposition~\ref{LDBound}, so we
need to show that under the hypothesis $\overline{\L_D}^2<0$ there is some
hermitian line bundle $\overline{\M}$ on $\X$ with positive
self-intersection such that $\overline{\M}_{a,b}^2>0$ implies
$\left(\overline{\L_D}\ldot \overline{\M}_{a,b}\right)\ge0$.
\begin{prop}\label{Mexists}
Suppose that $\overline{\L_D}^2<0$ and that $D_\X\;\cap\; T(\X)=\emptyset$.
Then there exists an ample hermitian line bundle $\overline{\M}$ on $\X$ such that for any positive
integers $a,b$ the following condition is satisfied: If $\overline{\M}_{a,b}^2> 0$, then
there exists a positive integer $n(a,b)$ such that $\overline{\M}_{a,b}^{\otimes n(a,b)}$
has an effective section $s$ satisfying
$h_{\overline{\L_D}}({\rm{div}(s)^{hor}})\ge0$.
\end{prop}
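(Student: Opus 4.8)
The plan is to produce the required section $s$ by combining Lemma~\ref{AlphaExists}, Lemma~\ref{Mample} and Theorem~\ref{HtNonNeg}. Lemma~\ref{AlphaExists} will supply, under the hypothesis $\overline{\L_D}^2<0$, an ample hermitian line bundle $\overline{\M}$ for which every twist $\overline{\M}_{a,b}$ of positive self-intersection carries plenty of strictly effective sections; Lemma~\ref{Mample} will then let me pick one such section whose divisor avoids the finite set $T(\X)$; and Theorem~\ref{HtNonNeg} will give that the horizontal part of that divisor has nonnegative $\overline{\L_D}$-height. The reason a more careful argument is needed here than in the easier case in which $\overline{\L}$ is horizontally semipositive is precisely that $\overline{\L_D}$ is \emph{not} horizontally semipositive, so an arbitrary effective section of $\overline{\M}_{a,b}^{\otimes n}$ will not do.

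First I would verify the hypotheses of Lemma~\ref{AlphaExists} for $\overline{\L_D}$. Restricting $\L_D$ to the generic fiber $X$, the vertical factor $\O(U_D)^{-1}$ and the archimedean factor $\Oa(-a\X_\infty)$ become trivial, so $\L_D|_X\cong\omega_X\otimes\O_X(2D)$ has degree $(2g-2)+2=2g>0$; and by assumption $\overline{\L_D}$ is relatively semipositive with $\overline{\L_D}^2<0$. Lemma~\ref{AlphaExists} then furnishes an ample hermitian line bundle $\overline{\M}$ on $\X$ such that whenever $\overline{\M}_{a,b}^2>0$ the lattice $H^0(\X,\M_{a,b}^{\otimes n})$ has a basis of strictly effective global sections for some $n\gg0$; I take this $\overline{\M}$.

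Next I would note that $\overline{\M}_{a,b}$ is itself an ample hermitian line bundle whenever $\overline{\M}_{a,b}^2>0$: it is relatively semipositive (being $\overline{\M}^{\otimes a}\otimes\overline{\L_D}^{\otimes b}$ with $a,b>0$ and both $\overline{\M}$ and $\overline{\L_D}$ relatively semipositive) and horizontally positive (this holds for the particular $\overline{\M}$ produced by Lemma~\ref{AlphaExists}, as shown in its proof), so ampleness follows from \cite[Theorem~1.5]{ZhangPos}. Since $T(\X)$ is a finite set of closed points of $\X$, Lemma~\ref{Mample} applied to $\overline{\M}_{a,b}$ and the points of $T(\X)$ then yields, for a suitable $n(a,b)\gg0$, a strictly effective (hence effective) global section $s$ of $\M_{a,b}^{\otimes n(a,b)}$ with $s(Q)\ne0$ for every $Q\in T(\X)$. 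Consequently $\supp(\mathrm{div}(s))\cap T(\X)=\emptyset$, so a fortiori $\supp(\mathrm{div}(s)^{\mathrm{hor}})\cap T(\X)=\emptyset$. Writing the effective horizontal divisor as $\mathrm{div}(s)^{\mathrm{hor}}=\sum_k n_kE_{k,\X}$ with $n_k\ge0$ and each $E_k$ a prime divisor on $X$, every $E_k$ has $\supp(E_{k,\X})\cap T(\X)=\emptyset$; since also $D_\X\cap T(\X)=\emptyset$ and $\deg(D)=1$, Theorem~\ref{HtNonNeg} gives $h_{\overline{\L_D}}(E_k)\ge0$ for all $k$, whence $h_{\overline{\L_D}}(\mathrm{div}(s)^{\mathrm{hor}})\ge0$ by linearity of the extended height. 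This $s$ has all the required properties.

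The substantive input is entirely in Lemma~\ref{AlphaExists}; the rest is assembly of already-proved lemmas. The only genuine subtlety, and where I expect the real care to be needed, is the one flagged above: because $\overline{\L_D}$ is not horizontally semipositive one cannot take just any effective section, but must exploit the abundance of strictly effective sections from Lemma~\ref{AlphaExists} to invoke Lemma~\ref{Mample} and so keep $\mathrm{div}(s)^{\mathrm{hor}}$ clear of the finite set $T(\X)$; beyond that, only the bookkeeping of splitting $\mathrm{div}(s)$ into its horizontal and vertical parts requires attention.
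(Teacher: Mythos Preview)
Your proposal is correct and follows essentially the same approach as the paper: apply Lemma~\ref{AlphaExists} to obtain $\overline{\M}$, deduce ampleness of $\overline{\M}_{a,b}$, use Lemma~\ref{Mample} to find a strictly effective section avoiding $T(\X)$, and conclude via Theorem~\ref{HtNonNeg}. You are somewhat more explicit than the paper (e.g.\ computing $\deg(\L_D)=2g$ and spelling out why $\overline{\M}_{a,b}$ is ample), and you cite \cite[Theorem~1.5]{ZhangPos} where the paper cites \cite[Theorem~1.3]{ZhangPos}; note also that relative semipositivity of $\overline{\L_D}$, which you invoke ``by assumption,'' is not actually among the stated hypotheses of the proposition---the paper's proof tacitly uses it as well, since it is the standing hypothesis of Proposition~\ref{LDBound} where this result is applied.
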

\begin{proof} 
It follows from Lemma~\ref{AlphaExists} that there is an ample hermitian line bundle
$\overline{\M}$ on $\X$ such that $\overline{\M}_{a,b}^2>0$ implies that
$H^0\left(\X,\M_{a,b}^{\otimes n}\right)$ has a basis consisting of strictly effective sections
for $n$ large enough.
By~\cite[Theorem~1.3]{ZhangPos}, $\overline{\M}_{a,b}$ is ample.
 Lemma~\ref{Mample} implies that there is a multiple $n(a,b)$ of $n$ and an
effective section $s$ of
$\overline{\M}_{a,b}^{\otimes n(a,b)}$ such that ${\rm{div}(s)^{hor}}$ does not intersect
the finite set $T(\X)\subset \X$. 
Using Theorem~\ref{HtNonNeg} we conclude $h_{\overline{\L_D}}({\rm{div}(s)^{hor}})\ge0$.
\end{proof}
Now we can complete the proof of Proposition~\ref{LDBound}.
\begin{proof}[Proof of Proposition~\ref{LDBound}]
Suppose that $\overline{\L_D}^2<0$.
Let $\overline{\M}$ be as in Proposition~\ref{Mexists} and let $a,b$ be positive integers
such that $\overline{\M}_{a,b}^2>0$.
It follows from Lemma~\ref{Mlist} and Proposition~\ref{Mexists} that we have
\[\left(\overline{\L_D}\ldot \overline{\M}_{a,b}^{\otimes n(a,b)}\right)\ge0\] and thus 
\[(\overline{\L_D}\ldot \overline{\M}_{a,b})\ge 0.\]  
But by Lemma~\ref{ImitateZhang} this leads to a contradiction.
\end{proof}
Next we prove Theorem~\ref{thm-main}.
It follows from~\cite[Lemma~A.1]{CRCapacity} that there exists a $\Q$-divisor
$D\in\Div_\Q(X)$ such  that $D_\X\cap T(\X)=\emptyset$ and such that $(2g-2) D$ is a
canonical $\Q$-divisor on $X$.
Moreover, it is shown in~\cite{CurillaKuehn} that 
\begin{equation}\label{KXV}
        \K=(2g-2) (D_\X+V_D)\in\Div_\Q(\X)
\end{equation}
is a canonical $\Q$-divisor on $\X$. 
By the latter we mean a $\Q$-divisor such that $\O(\K)=\omega$.  
\begin{proof}[Proof of Theorem~\ref{thm-main}]
        From~\eqref{KXV} we get $D_\X=\frac1{2g-2} \K-V_D$.
        We can use this to rewrite $\overline{ \L_D}$ (cf. Definition~\ref{LDDef}) as
\[
        \overline{\L_D}=\om^{\otimes\frac{g}{g-1}}\otimes\O(-2V_D-U_D)-4g a,
\]
where
\[
        a=\Oa(D_\X)^2-\O(V_D)^2=\frac{1}{4(g-1)^2}\om^2-\frac{1}{g-1}(\om\ldot \O(V_D)).
\]
Hence we have
\[
\overline{ \L_D}^2=\frac{1}{g-1}\left(g\om^2+(g-1)\O(2V_D+U_D)^2-2g(\om\ldot
\O(U_D))\right).
\]
Since $\overline{\L_D}^2\ge0$ by Proposition~\ref{LDBound}, Theorem~\ref{thm-main} follows.
\end{proof}
\begin{rk}
  We have developed the theory of $\overline{\L_D}$ for rather general degree
  one divisors $D\in\Div_\Q(X)$.
  The main reason why we choose to work with $D$ as in Theorem~\ref{thm-main} is 
  that $D_\X$ has an obvious relation with   $\om$.
  But there are other promising choices for $D$; for instance, we could take
  $D=\frac{1}{g^3-g}W$, where $W$ is the divisor of Weierstrass points on $X$.
  This was suggested by Ariyan Javanpeykar.
  In fact, it is easy to see that the divisor $\mathcal{V}$ used in~\cite[Lemma~5.1]{deJong}
  to extend $W$ to a divisor on $\X$ with good properties is a valid choice for
  $V_W$.
\end{rk}
Now we derive Theorem~\ref{thm-main} from Theorem~\ref{thm-simple}.
\begin{proof}[Proof of Theorem~\ref{thm-simple}]
Let $D \in \Div_\Q(X)$ have degree one.
    We know that $\beta=\beta_D$ does not depend on the choice of $D$ because of
    Lemma~\ref{betaformula}.
    In order to apply Theorem~\ref{thm-main}, we first need to show that
    $\overline{\L_D}$ is relatively semipositive.
Let $\Gamma$ be an irreducible component of a special fiber of $\X$.
By definition of $\overline{\L_D}$, we have
\[	(\overline{\L_D}\ldot \O(\Gamma))=
      (\K\ldot \Gamma)+2(D_\X\ldot \Gamma)-(U_{D}\ldot \Gamma)
  \]
  up to a positive rational constant, where $\K$ is a canonical divisor on $\X$.
  Hence relative semipositivity of $\overline{\mathcal{L}_D}$ follows from part (b) of Lemma~\ref{HDGi}.
Since we know that the assumptions of Theorem~\ref{thm-main} are satisfied,
it suffices to show that $(\K\ldot U_D)\ge 0$.
But this is an immediate consequence of Lemma~\ref{udk}.
\end{proof}
\begin{rk}\label{bogomolov}
One of the main original motivations to look at lower bounds for $\om^2$ was a conjecture
of Bogomolov.
Building on earlier work of Zhang~\cite{ZhangAdmissible},
the conjecture was finally proved by Ullmo~\cite{Ullmo}, who proved the positivity of the
admissible self-intersection $\omega_a^2$ of $\omega$ for curves over number fields.
For curves over function fields of characteristic~0, the conjecture was reduced by
Zhang~\cite{ZhangGrossSchoen} to a
conjecture about invariants of polarized metrized graphed and the latter was proved by
Cinkir~\cite{CinkirBogomolov}.
However, Cinkir actually proved an {\em effective} version of the Bogomolov conjecture.
Such an effective version can also be conjectured over number fields, but in this
situation it has not
been proved yet (it would follow from a proof of the arithmetic standard conjectures of
Gillet-Soul\'e, see~\cite[\S1.4]{ZhangGrossSchoen}).
Using \cite{ZhangAdmissible}, it suffices to find an effectively computable nontrivial lower 
bound for $\omega^2_a$.
If $\X$ is semistable and minimal, then we have 
\[
\om^2=\omega^2_a-\sum_v \eps_v(X),
\]
where $\eps_v(X)\ge0$ is Zhang's admissible constant associated to $X\times K_v$ (see
Remark~\ref{constants}) and the sum is over all non-archimedean places of $K$.
Hence it suffices to find an effectively computable lower 
bound $b$ for $\om^2$ such that 
$\sum_v \eps_v(X)<b$.
Therefore our work provides a possible approach to the effective Bogomolov Conjecture,
but unfortunately we already have $\beta \le \sum_v \eps_v$ for $g=2$
by Remark~\ref{eps-beta-g2}. 
\end{rk}
\section{Applications}\label{sec:apps}
Now we apply our results to compute lower bounds on the self-intersection of the 
relative dualizing sheaf for certain families of curves. 
\subsection{Modular curves}\label{x1n}
Let $N =N'QR$ be a squarefree integer such that $Q,R\ge 4$ and $\gcd(Q,R)=1$.
Consider the modular curve $X_1(N)$ over the cyclotomic field $\Q[\zeta_N]$ and its
minimal regular model $\X = \X_1(N)/\Z[\zeta_N]$.
Then $\X$ has semistable and reduced fibers. More precisely, the special fibers $\X_\fp$
are smooth if $\fp \nmid N$. If $\fp \mid N$ with residue characteristic $p$, then the
special fiber $\X_\fp$ consists of two isomorphic 
curves intersecting in 
\[
    s_\fp = \frac{p-1}{24}\frac{\varphi(N/p)N}{p}\prod_{q\mid N/p} (1+\frac{1}{q})
\]
points, all of which are rational over the residue field at $\fp$,
see~\cite[Proposition~7.3]{Mayer},
The arithmetic genera of these components are given by
\[
q_\fp = \frac{1}{2}(g_N-s_\fp+1),
\]
where 
\[
g_N = 1 +
\frac{1}{24}\varphi(N)N\prod_{p|N}(1+\frac{1}{p})-\frac{1}{4}\sum_{d|N}\varphi(d)\varphi(N/d)
\]
 is the genus of $X_1(N)$.
We can use Example~\ref{betaex} to compute an asymptotic lower bound for $\om^2$ quite easily:
\begin{prop}
The arithmetic self-intersection $\om^2$ of the relative dualizing sheaf on
$\X(N)$ satisfies
  \[
    \om^2\;\ge\;\frac{1}{2}\varphi(N)\log N + {o}(1),
  \]
\end{prop}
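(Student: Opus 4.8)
The plan is to apply Theorem~\ref{thm-simple} and Example~\ref{betaex} to each reducible fiber separately and then sum the resulting local contributions. First I would invoke Theorem~\ref{thm-simple}: since $\X = \X_1(N)/\Z[\zeta_N]$ is minimal with semistable (hence reduced) fibers, for any degree-one $D\in\Div_\Q(X)$ the hermitian line bundle $\overline{\L_D}$ is relatively semipositive, $\beta=\beta_D$ is independent of $D$, and $\beta\ge 0$. Consequently Theorem~\ref{thm-main} applies for a suitable choice of $D$ with $(2g_N-2)D$ canonical and $D_\X\cap T(\X)=\emptyset$, giving $\om^2\ge\beta_D=\sum_v\beta_{D_v}$, where the sum runs over the non-archimedean places of $\Q[\zeta_N]$.

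Next I would compute each local term $\beta_{D_v}$. For $\fp\nmid N$ the fiber $\X_\fp$ is smooth and irreducible, so $\beta_{D_\fp}=0$ by part~(iii) of Theorem~\ref{thm-simple}. For $\fp\mid N$ with residue characteristic $p$, the fiber $\X_\fp$ consists of two isomorphic components of multiplicity one, equal arithmetic genus $q_\fp$, meeting transversally in $s_\fp\ge 1$points; this is exactly the configuration of Example~\ref{vuex}, so by Example~\ref{betaex} we get
\[
\beta_{D_\fp}=\frac{1}{2s_\fp}\bigl(s_\fp+2q_\fp-2\bigr).
\]
Substituting $q_\fp=\tfrac12(g_N-s_\fp+1)$ yields $s_\fp+2q_\fp-2 = g_N - 1$, hence $\beta_{D_\fp}=\frac{g_N-1}{2s_\fp}$. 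Since $s_\fp\ge 1$ we certainly have $\beta_{D_\fp}\ge 0$, but I will need to keep the precise value to get the asymptotic lower bound.

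Then I would sum over the primes dividing $N$. Each rational prime $p\mid N$ (recall $N$ is squarefree) splits in $\Q[\zeta_N]$ into $\varphi(N)/f_p$ primes $\fp$, each with the same $s_\fp$; here $f_p$ is the order of $p$ in $(\Z/(N/p))^\times$, or more simply the residue degree, and the number of primes above $p$ is $\varphi(N)/f_p$. Thus
\[
\om^2\;\ge\;\sum_{p\mid N}\frac{\varphi(N)}{f_p}\cdot\frac{g_N-1}{2s_\fp}.
\]
The main work is the asymptotic analysis of this sum: one needs that for at least the primes $p$ with $p\asymp \log N$ (or a suitable chosen subset) the quantity $s_\fp$ is small enough that $\frac{g_N-1}{s_\fp}$ dominates. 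Using the explicit formulas, $g_N\sim\frac{1}{24}\varphi(N)N\prod_{p\mid N}(1+\tfrac1p)$ and $s_\fp = \frac{p-1}{24}\frac{\varphi(N/p)N}{p}\prod_{q\mid N/p}(1+\tfrac1q)$, one finds $\frac{g_N}{s_\fp}\approx\frac{p}{p-1}\cdot\frac{\varphi(N)}{\varphi(N/p)}\cdot(1+\tfrac1p) = \frac{p+1}{p-1}\cdot\frac{\varphi(N)}{\varphi(N/p)}$, and $\frac{\varphi(N)}{\varphi(N/p)}=p-1$ when $p\mid N/p$ fails, i.e. always here since $N$ squarefree gives $\varphi(N)=\varphi(p)\varphi(N/p)=(p-1)\varphi(N/p)$. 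Hence $\frac{g_N-1}{s_\fp}\approx p+1$, and picking the smallest prime factor $p_0$ of $N$ (or better, observing that any single term already contributes $\frac{\varphi(N)}{f_{p_0}}\cdot\frac{p_0+1}{2}$, and for $N$ with a small prime factor $f_{p_0}$ is bounded) one extracts a contribution of order $\varphi(N)$. To reach the stated $\tfrac12\varphi(N)\log N+o(1)$ one must instead sum over \emph{all} primes $p\mid N$: writing $\log N\ge\sum_{p\mid N}\log p$ does not quite work directly, so the delicate point is to show $\sum_{p\mid N}\frac{1}{f_p}\cdot\frac{p+1}{2}\cdot(1+o(1))\ge\tfrac12\log N$, which follows because in the relevant range of $N$ (those divisible by $Q,R\ge4$ coprime) the residue degrees $f_p$ are controlled and $\sum_{p\mid N}(p+1)/f_p$ outpaces $\log N$. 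I expect this number-theoretic estimate — carefully tracking $f_p$, $\varphi(N)$, $g_N$ and $s_\fp$ and showing the sum of local $\beta$'s is $\ge\tfrac12\varphi(N)\log N+o(1)$ — to be the main obstacle; the geometric input is entirely supplied by Theorem~\ref{thm-simple} and Example~\ref{betaex}.
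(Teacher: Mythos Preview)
Your overall strategy---apply Theorem~\ref{thm-simple} and Example~\ref{betaex} fiber by fiber and sum---is exactly what the paper does, but there is a genuine gap in how you assemble the local contributions. The local invariant $\beta_{D_\fp}$ computed in Section~\ref{Vsec} is a rational number coming from intersection theory over a strictly Henselian DVR. To obtain the global arithmetic quantity $\beta$ (which bounds the \emph{arithmetic} self-intersection $\om^2$), each local term must be weighted by $n_\fp=\log\#k(\fp)$. Your sum
\[
\sum_{p\mid N}\frac{\varphi(N)}{f_p}\cdot\frac{g_N-1}{2s_\fp}
\]
contains no logarithm at all, which is why you then struggle to extract a $\log N$ from it and are forced into a dubious comparison of $\sum_{p\mid N}(p+1)/f_p$ with $\log N$.

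Once you insert the weight $n_\fp$, the residue degrees drop out entirely via the identity $\sum_{\fp\mid p}n_\fp=\varphi(N/p)\log p$ (note also that $p$ ramifies in $\Q(\zeta_N)$ with index $p-1$, so your count of primes above $p$ is off by that factor). The correct global expression is
\[
\beta=\sum_{\fp\mid N}\frac{n_\fp}{2s_\fp}(g_N-1)
=\frac{g_N-1}{2}\sum_{p\mid N}\frac{\varphi(N/p)\log p}{s_\fp},
\]
and plugging in the explicit formula for $s_\fp$ gives, after the simplifications you already sketched, $\beta=12(g_N-1)\,\varphi(N)\bigl(\prod_{p\mid N}(p^2-1)\bigr)^{-1}\sum_{p\mid N}\frac{p+1}{p-1}\log p$. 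Using $24(g_N-1)=\prod_{p\mid N}(p^2-1)\cdot(1+o(1))$ and $\sum_{p\mid N}\frac{p+1}{p-1}\log p=\log N+O(1)$ then yields the claimed asymptotic directly, with no delicate number-theoretic estimate needed.
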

\begin{proof}
Let $n_\fp =\log\#k(\fp)$.
Then we have $\sum_{\fp|p} n_\fp = \varphi(N/p)\log(p)$ and hence, by
Example~\ref{betaex},
\begin{align*}
\beta&=\sum_{\fp|N}\frac{n_\fp}{2s_{\fp}}(s_\fp+2q_\fp-2)\\
&=\sum_{\fp|N}\frac{n_\fp}{2s_{\fp}}(g_N-1)\\
    &= \frac{g_N-1}{2}\sum_{p|N}\sum_{\fp|p}\frac{n_\fp}{s_{\fp}}\\
    &= 12(g_N-1)\frac{\varphi(N)}{\prod_{p|N}p^2-1}\sum_{p|N}\frac{p+1}{p-1}\log p\\
    &=\frac{1}{2}\varphi(N)\log N + {o}(1),
\end{align*}
since $\frac{24(g_N-1)}{\prod_{p|N}p^2-1}=1 + {o}(1)$.
\end{proof}
\begin{rk}
In~\cite[Theorem~7.7]{Mayer}, Mayer obtains the asymptotic formula
\[
    \om^2\;=\;3g_N\log(N) + {o}(g_N\log(N)).
\]
Our lower bound is much smaller than this asymptotic value.
\end{rk}
\subsection{Fermat curves of prime exponent}\label{Fermat}
In Section~\ref{Bds} we derived a nontrivial lower bound $\beta_D$ on $\om^2$ for
minimal arithmetic surfaces with simple multiplicities.
In the present subsection we compute lower bounds on $\om^2$ in a situation where Theorem
\ref{thm-simple} is not applicable, namely for minimal regular
models of Fermat curves of prime exponent.
Along the way, we construct $U_D$ and show that $\overline{\L_D}$ is
relatively semipositive, where $D=(S_x)$ for a certain rational point $S_x$ such that $(2g-2)D$ is a
canonical $\Q$-divisor on $\X$.
We start with a brief review of the notation from~\cite{CurillaKuehn}.
Let $p>3$ be a prime number, let \[
F_p:X^p+Y^p=Z^p
\] denote the Fermat curve with exponent $p$ over
$K=\Q(\zeta_p)$, where $\zeta_p$ is a primitive $p$-th root of unity. Let
$\sF^{\min}_p$ denote the minimal regular model of $F_p$ over $\Z[\zeta_p]$ as computed by McCallum cf.~\cite{McCallum}. 
We denote the components of the only non-reduced special fiber of $\sF^{\min}_p$ by
$L_i,i\in I$, where 
\[
I=\{x,y,z,\;\alpha_1,\;\alpha_{1,1},\ldots,\alpha_{1,p},\;\alpha_2,\;\alpha_{2,1},\ldots,\alpha_{2,p},\;\ldots,\;\alpha_r,\;\alpha_{r,1},\ldots,\alpha_{r,p},\;\beta_1,\ldots,\beta_s\}\]
and $2r+s=p-3$, see~\cite{CurillaKuehn}.
In Figure~\ref{d1}  the configuration of the
only reducible special fiber (occuring at the unique prime above $p$) of a certain non-minimal
model $\sF_p$ of $F_p$ is shown. 
It has the property that contracting the unique exceptional component $L$ on
$\sF_p$ yields $\sF^{\min}_p$. 
For every component $L_i$, we also list the pair $(m_i, L_i^2)$, where $m_i$ is the
multiplicity of $L_i$ in $\sF_p$. 
All components have genus 0 and the only component with self-intersection number~-1 is $L$.
See~\cite{McCallum,CurillaKuehn} for further details.
\begin{figure}[ht]\centering
 \input{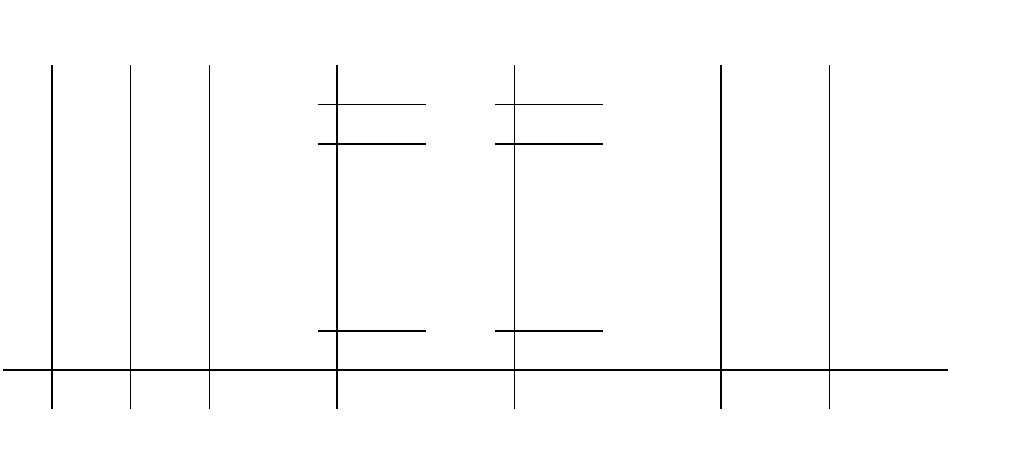tex_t}
  \parbox{12cm}{\caption{The configuration of the only reducible special fiber of
  $\sF_{p}$. }\label{d1}}
  \end{figure}
Note that $L_{\alpha_i}$ has multiplicity two for all $i$.
Therefore 
Theorem~\ref{thm-simple} does not
apply in general and we have to show relative semipositivity of $\overline{\L_D}$ and nontriviality of the
bound from Theorem~\ref{thm-main} directly for a suitable $\Q$-divisor $D$. 
Since there is only one place of bad reduction, we will omit it from the
notation for the sake of simplicity.
It is shown by Curilla and the first author in~\cite{CurillaKuehn} that  a canonical divisor on
$F_p$ is given by $(2g-2) D$, where $D=S_x$ for a certain
$K$-rational point $S_x$ on $F_p$ whose Zariski closure $\mathcal{S}_x := S_{x,
\sF^{\min}_p}$ in $\sF^{\min}_p$ intersects only $L_x$.
In order to compute a lower bound on $\om^2$ using the results of the previous
sections, we will find $U_D$.
This means that we first have to compute the divisor $V_i$ for each $i\in I$.
\begin{lemma}\label{VGis}
 We can take  \begin{itemize}
   \item $V_i=\frac{1}{p}L_i$, if $i\in\{x,y,z,\beta_j\}$,
      \item
        $V_{\alpha_i}=\frac{1}{p}L_{\alpha_i}+\frac{1}{2p}\sum^p_{j=1}L_{\alpha_{ij}}$,
      \item 
        $V_{\alpha_{i,j}}=-\frac{1}{p}L_{\alpha_i}+\left(\frac{1}{2}-\frac{1}{2p}\right)L_{\alpha_{ij}}-\frac{1}{2p}\sum_{k\ne
        j}L_{\alpha_{ik}}$.
    \end{itemize}
\end{lemma}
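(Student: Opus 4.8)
The plan is to verify the three displayed formulas by a direct computation against the intersection matrix of $\sF^{\min}_p$. First I would recall the defining property of $V_i$ from Section~\ref{Vsec}: $V_i = V_{D_i}$ is the vertical $\Q$-divisor attached by Proposition~\ref{Vexists} to a $\Q$-divisor $D_i$ on $F_p$ normalized by $(D_{i,\X}\ldot L_j)=\delta_{ij}$, so that $V_i$ is unique up to a rational multiple of the full special fiber $(\sF^{\min}_p)_s=\sum_{j\in I}m_jL_j$ and is pinned down, modulo this ambiguity, by the values $(V_i\ldot L_j)$ for $j\in I$, which are determined by the adjunction numbers $a_j=(\K\ldot L_j)$ and a Kronecker term. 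Since $g=g(F_p)=\tfrac12(p-1)(p-2)$ we have $2g-2=p(p-3)$, and since every component $L_j$ has arithmetic genus $0$, adjunction gives $a_j=-L_j^2-2$; thus all the prescribed intersection numbers are computed from the self-intersections $L_j^2$ on the minimal model. These self-intersections, together with the edges of the dual graph, are recorded in~\cite{McCallum} and~\cite{CurillaKuehn}; they are \emph{not} the numbers in Figure~\ref{d1}, which pertain to the auxiliary model $\sF_p$ -- passing to $\sF^{\min}_p$ by contracting the $(-1)$-curve $L$ alters the self-intersection of each component meeting $L$.

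With this data in hand the lemma follows by substitution: for each of the three families I would compute $(V_i\ldot L_j)$ for every $j\in I$ directly from the stated $\Q$-linear combination and the intersection matrix, and check that the outcome agrees with the prescribed value. The computation is organized by the shape of the special fiber: the index set $I$ decomposes into a ``spine'' carrying $L_x,L_y,L_z$ and the components $L_{\beta_1},\dots,L_{\beta_s}$, together with $r$ pairwise disjoint ``stars'', the $\ell$-th of which consists of $L_{\alpha_\ell}$ and the $p$ components $L_{\alpha_{\ell,1}},\dots,L_{\alpha_{\ell,p}}$ attached to it. Each stated $V_i$ is supported on a single component or on a single star, so $(V_i\ldot L_j)$ is nonzero only for the handful of $L_j$ meeting the support of $V_i$, and each required check is a one-line computation. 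Since $V_i$ is only defined up to a multiple of the full fiber, it suffices to exhibit the stated divisor as one valid representative; the identity $\sum_j m_j(V_i\ldot L_j)=(V_i\ldot(\sF^{\min}_p)_s)=0$ gives a convenient arithmetic check on each case.

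The main obstacle will be bookkeeping rather than mathematics. One must consistently use the self-intersection numbers of the minimal model $\sF^{\min}_p$ (not those of $\sF_p$ in the figure), and keep careful track of the non-reduced multiplicities $m_j$ -- in particular $m_{\alpha_\ell}=2$ -- which enter both the full fiber and the prescribed intersection numbers. The most delicate case is $V_{\alpha_{\ell,j}}$, where the coefficients vary along the star; here it pays to record once and for all the intersection numbers internal to a single star (the $L_{\alpha_{\ell,k}}$ are $(-2)$-curves meeting only $L_{\alpha_\ell}$, which carries its own self-intersection and multiplicity $2$) and then reuse them for every $j$ and every adjacent component. Once this table is set up, all three verifications are routine.
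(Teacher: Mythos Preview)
Your proposal is correct and follows essentially the same approach as the paper: recall the defining relation $(V_i\ldot L_j)=a'_j-\delta_{ij}$, compute $a'_j$ from adjunction (yielding $a'_j=\tfrac{1}{p}$ except $a'_{\alpha_{\ell,k}}=0$), and then verify each candidate by direct substitution into the intersection matrix of $\sF^{\min}_p$. Your warning about distinguishing the self-intersections on $\sF_p$ in Figure~\ref{d1} from those on $\sF^{\min}_p$ after contracting $L$ is a useful caveat, but otherwise the argument is the same routine check the paper leaves to the reader.
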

\begin{proof}
  Recall that the divisor $V_i$ must satisfy
  \begin{equation}\label{VGi}
  (V_i\ldot L_j)=a'_j-\delta_{ij}
    \end{equation}
  for all $j\in I$, where $a'_j=\frac{1}{2g-2}(-L_j^2+2p_a(L_j)-2)$ and
  $\delta_{ij}$ is the Kronecker delta function on $I$.
  Since all components have genus zero and self-intersection $1-p$ except for
  the components $L_{\alpha_{ij}}$, which have genus zero and self-intersection
  $-2$, we get $a'_j=0$ for the components $L_{\alpha_{ij}}$ and $a'_j=\frac{1}{p}$ for all other components.
  Checking the validity of the Lemma reduces to checking~\eqref{VGi} for each
  $i\in I$ which is a simple computation that we leave to the reader.
  Note that for $V_x$ this was essentially shown in~\cite[Proposition~8.3]{CurillaKuehn}.
  \end{proof}
\begin{cor}\label{HK}
  A canonical $\Q$-divisor on $\sF^{\min}_p$ is given by
  $(2g-2)\mathcal{S}_x+\frac{1}{p}L_x$ and we have
  \begin{align*}
    U_D =&
    \frac{1-p}{p^2}L_x+\frac{1+p}{p^2}\left(L_y+L_z+\sum^s_{j=1}L_{\beta_j}\right)+\frac{1+p/2}{p^2}\sum^r_{i=1}L_{\alpha_i}\\&+\frac{p^2/2+p/2-3}{p^2}\sum^r_{i=1}\sum^p_{j=1}L_{\alpha_{ij}}.
  \end{align*}
\end{cor}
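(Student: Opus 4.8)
The corollary has two parts. The assertion about the canonical $\Q$-divisor is immediate from~\eqref{KXV}: Curilla and the first author show in~\cite{CurillaKuehn} that $(2g-2)S_x$ is a canonical divisor on $F_p$, so applying~\eqref{KXV} with $D=S_x$ shows that $(2g-2)(\mathcal{S}_x+V_x)$ is a canonical $\Q$-divisor on $\sF^{\min}_p$, and substituting the value $V_x=\frac1p L_x$ from Lemma~\ref{VGis} gives the stated representative. No further work is needed for this part.

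For the formula for $U_D$ the plan is to unwind the definition of $U_D$. Since $d=\deg D=1$, expanding $\gamma_{D,i}=\frac1d\bigl(V_D^2-(V_D-dV_i)^2\bigr)$ yields $\gamma_{D,i}=2(V_D\ldot V_i)-V_i^2$, so it suffices to compute the two rational numbers $(V_D\ldot V_i)$ and $V_i^2$ for each $i\in I$. Using $V_D=V_x=\frac1p L_x$ one has $(V_D\ldot V_i)=\frac1p(V_i\ldot L_x)$, and $(V_i\ldot L_x)$ is read off directly from the intersection numbers that define $V_i$ in Lemma~\ref{VGis}; this produces $(V_D\ldot V_i)=\frac1{p^2}-\frac1p\delta_{ix}$. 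For $V_i^2$ one expands $V_i$ into its irreducible components via the explicit formulas of Lemma~\ref{VGis} and pairs against each component occurring in the support, again using the defining intersection numbers together with the values $a'_j$ (equal to $\frac1p$ for all components except the $L_{\alpha_{ij}}$, where $a'_j=0$). Plugging the resulting values into $\gamma_{D,i}=2(V_D\ldot V_i)-V_i^2$ and grouping the components according to the symmetry of the configuration --- $L_x$ by itself; the mutually interchangeable $L_y,L_z,L_{\beta_j}$; the $L_{\alpha_i}$; and the $L_{\alpha_{ij}}$ --- then yields the four coefficients in the displayed expression.

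I expect the real work to lie in handling the components $L_{\alpha_i}$ of multiplicity two and the chains of $(-2)$-curves $L_{\alpha_{ij}}$ attached to them. The multiplicity $b_{\alpha_i}=2$ enters the normalization of $V_{\alpha_i}$, and evaluating $V_{\alpha_i}^2$ and $V_{\alpha_{ij}}^2$ requires the precise combinatorics of the dual graph of $\sF^{\min}_p$ as determined by McCallum~\cite{McCallum} and recalled in~\cite{CurillaKuehn}; this is a finite but genuinely fiddly bookkeeping computation. In principle one could instead compute $(U_D\ldot L_i)$ through Lemma~\ref{HDGi}(a), but that would call for the Moore--Penrose pseudoinverse of the intersection matrix of this configuration, which is itself a nontrivial object here, so the direct computation via the explicit divisors $V_i$ seems the more efficient route.
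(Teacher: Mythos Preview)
Your plan coincides with the paper's proof: the paper also deduces the canonical divisor from~\eqref{KXV} and Lemma~\ref{VGis}, and computes $U_D$ by writing $\gamma_{D,i}=2(V_i\ldot V_x)-V_i^2$ and evaluating both terms from the explicit formulas of Lemma~\ref{VGis}. So the overall architecture is right.

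The one point that does not go through is your shortcut for $(V_D\ldot V_i)$. You claim $(V_i\ldot L_x)=a'_x-\delta_{ix}=\tfrac{1}{p}-\delta_{ix}$ for every $i\in I$, hence $(V_D\ldot V_i)=\tfrac{1}{p^2}-\tfrac{1}{p}\delta_{ix}$. But if you actually pair the explicit divisor $V_{\alpha_{ij}}$ from Lemma~\ref{VGis} against $L_x$, only the summand $-\tfrac{1}{p}L_{\alpha_i}$ contributes (the $L_{\alpha_{ik}}$ do not meet $L_x$), and you get $(V_{\alpha_{ij}}\ldot L_x)=-\tfrac{1}{p}$, so $(V_{\alpha_{ij}}\ldot V_x)=-\tfrac{1}{p^2}$, which is exactly the value the paper uses. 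With your value $+\tfrac{1}{p^2}$ the coefficient of $L_{\alpha_{ij}}$ in $U_D$ would come out as $\tfrac{p^2/2+p/2+1}{p^2}$ rather than the stated $\tfrac{p^2/2+p/2-3}{p^2}$. In other words, the defining relation you invoke does not hold for the explicit $V_{\alpha_{ij}}$ given in Lemma~\ref{VGis} when paired with $L_x$, so you cannot use it as a black box here; you have to compute $(V_i\ldot V_x)$ by direct expansion in components, just as you (and the paper) do for $V_i^2$. Once you do that, the rest of your argument is identical to the paper's.
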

\begin{proof}
  The first assertion follows immediately from~\eqref{KXV}, the fact that $(2g-2)S_x$ is a
  canonical divisor on $F_p$ and Lemma~\ref{VGis}. 
  For the second assertion, recall the definition of $U_D$: 
 
  \[
    U_D=\sum_{i\in I}\left(2(V_i\ldot V_x)-V_i^2\right) L_i
  \]
  Next we compute
  \begin{align*}
    V_i^2\;=\;&\left(\frac{1}{p}L_i\right)^2=\frac{1}{p^2}(1-p)\textrm{
  for }i\in\{x,y,z,\beta_j\}\\
  (V_i\ldot V_x)\;=\;&\left(\frac{1}{p}L_i\ldot \frac{1}{p}L_x\right)=\frac{1}{p^2}\textrm{
  for }i\in\{y,z,\beta_j\}\\
  (V_{\alpha_i}\ldot
  V_x)\;=\;&\left(\frac{1}{p}L_{\alpha_i}+\frac{1}{2p}L_{\alpha_{ij}}\ldot \frac{1}{p}L_x\right)=\frac{1}{p^2}\\
(V_{\alpha_{ij}}\ldot V_x)\;=\;&\left(-\frac{1}{p}L_{\alpha_i}+\left(\frac{1}{2}-\frac{1}{2p}\right)L_{\alpha_{ij}}-\frac{1}{2p}\sum_{k\ne
j}L_{\alpha_{ik}}\ldot \frac{1}{p}L_x\right)=-\frac{1}{p^2}\\
  \end{align*}
  \begin{align*}
(V_{\alpha_i})^2\;=\;&\left(\frac{1}{p}L_{\alpha_i}+\frac{1}{2p}L_{\alpha_{ij}}\right)^2=\frac{1}{p^2}(1-p)+\frac{p}{p^2}-\frac{2p}{4p^2}
=\frac{1}{p^2}-\frac{1}{2p}\\
(V_{\alpha_{ij}})^2\
  \;=\;&\left(-\frac{1}{p}L_{\alpha_i}+\left(\frac{1}{2}-\frac{1}{2p}\right)L_{\alpha_{ij}}-\frac{1}{2p}\sum_{k\ne
  j}L_{\alpha_{ik}}\right)^2\\
  \;=\;&\frac{1}{p^2}(1-p)-\frac{1}{2}\left(1-\frac{1}{p}\right)^2-\frac{1}{2p^2}(p-1)-\frac{1}{p}\left(1-\frac{1}{p}\right)+\frac{p-1}{p^2}\\
  \;=\;&\frac{1}{2p^2}(2-p-p^2)
  \end{align*}
  A simple computation proves the corollary.
\end{proof}
\begin{lemma}\label{FpRelSem}
  The hermitian line bundle $\overline{\L_D}$ is relatively semipositive.
\end{lemma}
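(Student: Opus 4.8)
The plan is to verify relative semipositivity of $\overline{\L_D}$ directly, i.e. to show that $(\overline{\L_D}\ldot\O(L_i))\ge 0$ for every component $L_i$ of the special fiber of $\sF^{\min}_p$. As in the proof of Theorem~\ref{thm-simple}, up to a positive rational constant this intersection number equals
\[
(\K\ldot L_i)+2(\mathcal{S}_x\ldot L_i)-(U_D\ldot L_i),
\]
where $\K=(2g-2)(\mathcal S_x+V_D)$ is the canonical divisor on $\sF^{\min}_p$ from~\eqref{KXV}. So the task reduces to computing these three quantities componentwise and checking the sign. Since all data is explicit, this is a finite computation, but it must be carried out for each of the component types $L_x$, $L_y$, $L_z$, $L_{\beta_j}$, $L_{\alpha_i}$ and $L_{\alpha_{i,j}}$.

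First I would record the intersection numbers $(\mathcal S_x\ldot L_i)$: by the description of $\mathcal S_x$, it meets only $L_x$, so $(\mathcal S_x\ldot L_x)=1$ and $(\mathcal S_x\ldot L_i)=0$ otherwise. Next I would compute $(\K\ldot L_i)=(2g-2)a'_i$ using the adjunction values from the proof of Lemma~\ref{VGis}: since every component has genus zero, $a_i=-L_i^2-2$, which gives $a_i=p-3$ (hence $a'_i=1/p$ after dividing by $2g-2$, using that $2g-2=(p-1)(p-2)-2$... more precisely one uses the explicit genus of $F_p$) for every component except the $L_{\alpha_{i,j}}$, which have $L_{\alpha_{i,j}}^2=-2$ and thus $a_i=0$, $a'_i=0$. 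Finally I would read off $(U_D\ldot L_i)$ from the explicit form of $U_D$ in Corollary~\ref{HK}, using the intersection matrix encoded in Figure~\ref{d1} (each $L_{\alpha_{i,j}}$ meets $L_{\alpha_i}$ and nothing else among the fiber components up to the usual incidences; $L_x$, $L_y$, $L_z$, the $L_{\beta_j}$ and the $L_{\alpha_i}$ meet according to the configuration shown, with the exceptional $L$ already contracted). Assembling the three pieces, I expect the combination $(\K\ldot L_i)+2(\mathcal S_x\ldot L_i)-(U_D\ldot L_i)$ to be nonnegative in each case — indeed, for most components the $(\K\ldot L_i)$ term dominates and for $L_x$ the extra $+2$ from $(\mathcal S_x\ldot L_x)$ compensates the negative contribution $(1-p)/p^2$ in $U_D$.

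The main obstacle I anticipate is bookkeeping rather than conceptual: one has to be careful with the contraction relating $\sF_p$ and $\sF^{\min}_p$, since the self-intersection numbers and multiplicities printed in Figure~\ref{d1} refer to $\sF_p$, whereas $\overline{\L_D}$, $V_D$, $U_D$ and the adjunction data live on $\sF^{\min}_p$ (e.g. the components $L_{\alpha_i}$ of multiplicity two have self-intersection $-2$ on $\sF_p$ but $1-p$ on $\sF^{\min}_p$, as used in Lemma~\ref{VGis}). Keeping the two models straight, and tracking how the non-reduced multiplicities $m_i$ enter the pairing $(\overline{\L_D}\ldot\O(L_i))$ versus the reduced intersection numbers, is where an error could creep in. Once the intersection data on $\sF^{\min}_p$ is correctly tabulated, the verification of each inequality is a one-line arithmetic check, and the lemma follows.
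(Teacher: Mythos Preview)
Your proposal is correct and follows essentially the same route as the paper: reduce relative semipositivity to the componentwise inequality $a_i+2(\mathcal{S}_x\ldot L_i)-(U_D\ldot L_i)\ge 0$, read off $a_i$ from adjunction and $(\mathcal{S}_x\ldot L_i)$ from the description of $\mathcal{S}_x$, compute $(U_D\ldot L_i)$ from Corollary~\ref{HK} and the intersection matrix, and check each case. One small slip in your parenthetical example: on $\sF_p$ the components $L_{\alpha_i}$ have self-intersection $-p$ (not $-2$), which becomes $1-p$ on $\sF^{\min}_p$ after contracting $L$; but your broader warning about keeping the two models straight is well taken and does not affect the argument.
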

\begin{proof}
  Recalling Definition~\ref{LDDef} of $\overline{\L_D}$ we see that we need to
  show
  \begin{equation}\label{LKinequ}
    a_i+2(\mathcal{S}_x\ldot L_i)-(U_D\ldot L_i)\ge 0
  \end{equation}
  for all $i\in I$, where $a_i=0$ for $i=\alpha_{ij}$ and $a_i=p-3$ for all
  other components.
  As usual, we will distinguish between the different components $L_i$ to prove
 ~\eqref{LKinequ}.
  Throughout, we will use that $s=p-3-2r$ to eliminate $s$.
  We start with $L_x$ and find, using Corollary~\ref{HK}, that
  \[
    (U_D\ldot L_x)=\frac{1-p}{p^2}(1-p)+(s+2)\frac{1+p}{p^2}+r\frac{1+p/2}{p^2}=2-\frac{2}{p}-\frac{r}{p^2}-\frac{3r}{2p}
  \]
  which gives
  \[
    a_x+2(\mathcal{S}_x\ldot L_x)-(U_D\ldot L_x)\ge p-3>0.
  \]
  Let $i\in\{y,z,\beta_j\}$. 
  Then we get
  \[
    (U_D\ldot L_i)=\frac{1-p}{p^2}+\frac{1+p}{p^2}(1-p)+(s+1)\frac{1+p}{p^2}+r\frac{1+p/2}{p^2}=-\frac{2}{p}-\frac{r}{p^2}-\frac{3r}{2p}
  \]
  and thus
  \[
    a_i+2(\mathcal{S}_x\ldot L_i)-(U_D\ldot L_i)\ge p-3>0.
  \]
  It remains to consider the components $L_{\alpha_i}$ and $L_{\alpha_{ij}}$.
  The computations are similar to the ones above, but tedious and hence are
  omitted.
  The upshot is that we get inequalities
  \[
    a_{\alpha_i}+2(\mathcal{S}_x\ldot L_{\alpha_i})-(U_D\ldot L_{\alpha_i})=
    p-3-\frac{1}{2}p+1+\frac{5}{p}+\frac{r}{p^2}+\frac{3r}{2p}\ge
    \frac{p}{2}-2>0
  \]
  and
  \[
    a_{\alpha_{ij}}+2(\mathcal{S}_x\ldot L_{\alpha_{ij}})-(U_D\ldot L_{\alpha_{ij}})=
    1+\frac{1}{2p}-\frac{7}{p^2}>0.    
  \]
  This shows that~\eqref{LKinequ} is satisfied for all components $L_i$, which
  proves the lemma.
\end{proof}
\begin{thm}\label{FpBound} Let $p>3$ be a prime number and let $\om^2$ denote the
    arithmetic self-intersection of the relative dualizing sheaf on $\sF^{\min}_p$.
\\(i) We have
  \begin{align*}
    \om^2\;\ge\;&
    \frac{1}{4p^3(p-1)(p-2)}\left((4+2r)p^6-(32+10r)p^5+(10+19r)p^4+(124\right.\\&-r-25r^2)p^3+(-56+52r+31r^2)p^2+(156-328r+112r^2)p\\&\left.+144-24r+60r^2\right)\log p
  \end{align*}
  (ii) We have 
  \begin{align*}
    \om^2\;\ge\;&\frac{\left(4p^6-32p^5+\frac{13}{2}p^4+\frac{73}{2}p^3-52p^2+144\right)
}{4p^3(p-1)(p-2)}\log p.
  \end{align*}
This lower bound is positive for all $p>7$.
Furthermore, if $p=5$, then we have $\om^2\ge\frac{188}{125}\log 5$ and if
$p=7$, then we have
$\om^2\ge\frac{37277}{6860}\log 7$.
\end{thm}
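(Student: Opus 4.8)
The plan is to derive Theorem~\ref{FpBound} from Theorem~\ref{thm-main}, applied to $X = F_p$, $\X = \sF^{\min}_p$ over $\O_K = \Z[\zeta_p]$ and $D = (S_x)$, and then to make the resulting inequality $\om^2 \ge \beta_D$ fully explicit. Since $F_p$ is a smooth plane curve of degree $p$ we have $g = \tfrac{1}{2}(p-1)(p-2)$, hence $2g-2 = p(p-3)$ and $\tfrac{1-g}{g} = -\tfrac{p(p-3)}{(p-1)(p-2)}$. Moreover $p$ is totally ramified in $K = \Q(\zeta_p)$, so there is a single prime $\fp \mid p$, and its residue field is $\F_p$; as $F_p$ has good reduction away from $\fp$, the global quantity of Definition~\ref{LDDef} collapses to $\beta_D = \beta_{D_\fp}\log p$, where $\beta_{D_\fp} = \tfrac{1-g}{g}(2V_D+U_D)^2 + 2(\K \ldot U_D)$ depends only on the intersection theory of the special fiber of $\sF^{\min}_p$.

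First I would check the hypotheses of Theorem~\ref{thm-main}. By~\cite{CurillaKuehn} the divisor $(2g-2)(S_x)$ is canonical on $F_p$, and by~\eqref{KXV} the $\Q$-divisor $\K = (2g-2)(\mathcal{S}_x + V_D)$ is canonical on $\sF^{\min}_p$; here $V_D = \tfrac{1}{p} L_x$ by Lemma~\ref{VGis}. The closure $\mathcal{S}_x$ meets the special fiber only in the reduced component $L_x$, at a smooth point of that fiber, hence $\mathcal{S}_x$ lies in the relative smooth locus and so avoids the finite set $T(\sF^{\min}_p)$, giving $D_\X \cap T(\X) = \emptyset$; and $\overline{\L_D}$ is relatively semipositive by Lemma~\ref{FpRelSem}. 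Thus Theorem~\ref{thm-main} yields $\om^2 \ge \beta_{D_\fp}\log p$, and it remains to compute $\beta_{D_\fp}$.

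For that I would substitute the data already assembled into the definition of $\beta_{D_\fp}$: the divisors $V_i$ of Lemma~\ref{VGis}, the coefficients $\gamma_{D,i}$ in $U_D = \sum_i \gamma_{D,i} L_i$ from Corollary~\ref{HK}, and the intersection numbers $(U_D \ldot L_i)$ already worked out in the proof of Lemma~\ref{FpRelSem}. Then $U_D^2 = \sum_i \gamma_{D,i}(U_D \ldot L_i)$; since $V_D = \tfrac{1}{p} L_x$ and $L_x^2 = 1-p$ one has $(2V_D+U_D)^2 = \tfrac{4}{p^2}(1-p) + \tfrac{4}{p}(U_D \ldot L_x) + U_D^2$; and, because $\mathcal{S}_x$ meets only $L_x$ and transversally, $(\K \ldot U_D) = (2g-2)\big(\gamma_{D,x} + \tfrac{1}{p}(U_D \ldot L_x)\big)$. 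Inserting these into $\beta_{D_\fp} = -\tfrac{p(p-3)}{(p-1)(p-2)}(2V_D+U_D)^2 + 2(\K \ldot U_D)$, clearing denominators, and eliminating $s$ via $s = p-3-2r$ yields, after simplification, exactly the bound of part~(i). I expect this step — carrying the five families of components ($L_x$; $L_y, L_z$; the $L_{\beta_j}$; the $L_{\alpha_i}$; the $L_{\alpha_{ij}}$), with their multiplicities and mutual intersections, and then the polynomial algebra — to be the main obstacle, although it involves no new idea once Theorem~\ref{thm-main} is in place.

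Finally, for part~(ii) I would view the bracketed polynomial of~(i) as a quadratic in the non-negative integer $r$, which satisfies $0 \le r \le \tfrac{p-3}{2}$ because $2r+s = p-3$ with $s \ge 0$, and bound it from below, over this range of $r$, by a polynomial in $p$ alone; the rational function $\big(4p^6 - 32p^5 + \tfrac{13}{2}p^4 + \tfrac{73}{2}p^3 - 52p^2 + 144\big)\big/\big(4p^3(p-1)(p-2)\big)$ so obtained is then seen to be positive for every $p > 7$, the term $4p^6$ outweighing the rest. The two remaining primes $p = 5$ and $p = 7$, for which this simplified bound is not yet positive, are treated separately by specialising the construction to the explicit reductions of $F_5$ and $F_7$ recorded in~\cite{McCallum,CurillaKuehn} and computing $\beta_D$ there, which gives $\om^2 \ge \tfrac{188}{125}\log 5$ and $\om^2 \ge \tfrac{37277}{6860}\log 7$. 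The asymptotic estimate $\om^2 \ge p\log p + \O(\log p)$ of Theorem~\ref{thm-FpBound2} then follows at once from~(ii).
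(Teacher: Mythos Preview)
Your proposal is correct and follows essentially the same route as the paper: apply Theorem~\ref{thm-main} with $D=(S_x)$, verify its hypotheses via Lemma~\ref{FpRelSem}, compute $\beta_{D_\fp}$ explicitly, and then handle~(ii) via the range $0\le r\le\tfrac{p-3}{2}$ together with the known values $r=0$ for $p=5$ and $r=2$ for $p=7$. The only organizational difference is that the paper computes $(\K\ldot U_D)$ directly via the adjunction formula as $\sum_i\theta_i a_i$ (which kills the $L_{\alpha_{ij}}$ terms since $a_{\alpha_{ij}}=0$) and squares $2V_D+U_D$ as a single explicit divisor, whereas you expand $(2V_D+U_D)^2$ into pieces and use the explicit canonical divisor $(2g-2)(\mathcal{S}_x+\tfrac1p L_x)$; both lead to the same algebra.
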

\begin{proof}
  By Theorem~\ref{thm-main} and Lemma~\ref{FpRelSem} we know that
  \[
    \om^2\ge \beta_D =-\frac{g-1}{g}\O(2V_D+U_D)^2+2(\om\ldot \O(U_D)).
  \]
  We compute the terms on the right hand side.
  
  First note that
  \begin{align*}
    2V_D+U_D=&\;
    \frac{1+p}{p^2}\left(L_x+L_y+L_z+\sum^s_{j=1}L_{\beta_j}\right)\\&\;+\frac{1+p/2}{p^2}\sum^r_{i=1}L_{\alpha_i}+\frac{p^2/2+p/2-3}{p^2}\sum^r_{i=1}\sum^p_{j=1}L_{\alpha_{ij}}
\end{align*}
Using this, it is not hard to verify that
\begin{align}\label{VKHKSq}
(2V_D+U_D)^2=&\;
-\frac{pr}{2}-\frac{r}{2}+1+\frac{1}{p}(\frac{7}{4}r-5)+\frac{1}{p^2}(\frac{25}{4}r^2-5r-1)\nonumber\\
&\; +\frac{1}{p^3}(17-30r+11r^2)+\frac{1}{p^4}(12-2r+5r^2).
\end{align}
For the computation of $(\mathcal{K}\ldot U_D)$, where $\mathcal{K}$ is a
canonical divisor on $\sF^{\min}$, we use the adjunction formula.
Namely, if $\theta_i$ denotes the multiplicity of $L_i$ in $U_D$, then we have
\[
(\mathcal{K}\ldot U_D)=\sum_{i\in I}\theta_ia_i
\]
and hence
\begin{equation}\label{AdjHK}
(\mathcal{K}\ldot U_D)=(p-3)\left(\frac{1-p}{p^2}+(s+2)\frac{1+p}{p^2}+r\frac{1+p/2}{p^2}\right).
\end{equation}
A combination of~\eqref{VKHKSq} and~\eqref{AdjHK} proves (i) after a
little algebra.
For (ii) we use (i) and $0\le r \le \frac12p-\frac32$.
To derive the lower bounds for $p=5,7$, we use that $r=0$ if $p=5$ and $r=2$ if
$p=7$.
\end{proof}
The proof of Theorem~\ref{thm-FpBound2} follows immediately from Theorem~\ref{FpBound}.
\begin{rk}
The upper bound computed by Curilla and the first author in~\cite{CurillaKuehn}
is of order $\O(gp\log p)$, i.e. of order $\O(p^3\log p)$. 
\end{rk}


\begin{thebibliography}{999}
{\frenchspacing
    \bibitem{AbbesUllmo} 
            A. Abbes and E. Ullmo, {\em Auto-intersection du dualisant relatif des courbes modulaires $X_0(N)$}, 
            J. Reine Angew. Math. {\bf 484}, 1--70 (1997).
    \bibitem{BLR}
            S. Bosch, W. L\"utkebohmert and M. Raynaud, {\em N\'eron models}, 
            Springer-Verlag, Berlin (1990).
    \bibitem{BGS} 
            J.-B. Bost, H. Gillet and C. Soul\'e,
            {\em Heights of projective varieties and positive Green forms},
            J. Am. Math. Soc. {\bf 7}, No. 4, 903--1027 (1994).
    \bibitem{Burnol}
            J.-F. Burnol, 
            {\em Weierstrass points on arithmetic surfaces},
            Invent. Math. {\bf 107}, 421--432 (1992).
    \bibitem{CRCapacity}
            T. Chinburg and R. Rumely,
            {\em The capacity pairing}, J. Reine Angew. Math.  {\bf 434},  1--44 (1993).
    \bibitem{CinkirBogomolov}
            Z. Cinkir,
            {\em Zhang's conjecture and the effective Bogomolov conjecture over function
            fields}, Invent. Math. {\bf 183}, No. 3, 517--562 (2011).
    \bibitem{CinkirPseudo}
            Z. Cinkir,
            {\em The tau constant and the discrete Laplacian matrix of a metrized graph},
            Eur. J. Comb. {\bf 32}, No. 4, 639--655 (2011).
    \bibitem{CurillaKuehn} 
            C. Curilla and U. K\"uhn,
            {\em On the arithmetic self-intersection number of 
            the dualizing sheaf for Fermat curves of prime exponent}, Preprint (2009). 
            arXiv:math/0906.3891v1 [math.NT]
    \bibitem{deJong}
            R. de Jong,
            {\em Arakelov invariants of Riemann surfaces},
            Doc. Math. {\bf 10}, 311--329 (2005).
    \bibitem{deJongGenus2}
            R. de Jong,
            {\em Admissible constants for genus 2 curves},
            Bulletin of the LMS {\bf 42}, 405--411 (2010).
    \bibitem{deJongSymmetric}
            R. de Jong,
            {\em Symmetric roots and admissible pairing}, Transactions of the AMS {\bf
            363}, 4263--4283 (2011), 
    \bibitem{LangArak}
            S. Lang, {\em Introduction to Arakelov theory},
            Springer-Verlag, New York (1988).
  \bibitem{LiuBook} 
        Q. Liu, {\em Algebraic Geometry and arithmetic curves}, Oxford University Press, Oxford (2002).
    \bibitem{LiuStable}
            Q. Liu,
            {\em Stable reduction of finite covers of curves},
            Compositio Math. {\bf 142}, 101--118 (2006).
    \bibitem{Mayer}
            H. Mayer, {\em Self-Intersection of the relative Dualizing Sheaf of
            Modular Curves $X_1(N)$}, Preprint (2012).
            arXiv:math/1212.1294v1 [math.NT]
    \bibitem{McCallum}
            W.G. McCallum, {\em The degenerate fiber of the Fermat curve}, in {\em Number theory related to Fermat's last theorem},
            Progr. Math. {\bf 26}, Birkh\"auser, Boston, 57--70 (1982).
    \bibitem{Moriwaki}
            A. Moriwaki, {\em Lower bounds of self-intersection of
              dualizing sheaves on arithmetic surfaces with reducible
            fibres}, Math. Ann. {\bf 305}, 183--190 (1996).
            
    \bibitem{Soule}
            C. Soul\'e,
            {\em G\'eom\'etrie d’Arakelov des surfaces arithm\'etiques},
            S\'eminaire Bourbaki, Vol. 1988/89. Ast\'erisque No. {\bf 177--178}, Exp. No. 713,
            327--343 (1989). 
    \bibitem{sun} X. Sun, {\em
            On relative canonical sheaves of arithmetic
            surfaces},   Math. Z. {\bf 223}, 709--723 (1996).
    
    \bibitem{Szpiro} 
            L. Szpiro, 
            {\em Sur les propri\'et\'es num\'eriques du dualisant relatif d'une surface
arithm\'etique}. In {\em The Grothendieck Festschrift}, Vol. III, Progr. Math.,
{\bf 88}, 229--246  (1990).
            
    \bibitem{Ullmo} 
            E. Ullmo, {\em Positivit\'e et discr\'etion des points alg\'ebriques des courbes},
            Ann. Math. (2) {\bf 147}, No. 1, 167--17 (1998). 
            
    \bibitem{ZhangPos}
            S. Zhang, {\em Positive line bundles on arithmetic surfaces}, 
            Ann. Math. (2) {\bf 136}, 569--587 (1992).
    \bibitem{ZhangAdmissible}
            S. Zhang, {\em Admissible pairing on a curve},
            Invent. Math. {\bf 112}, 171--193 (1993).
    \bibitem{ZhangGrossSchoen}
            S. Zhang, {\em Gross-Schoen cycles and dualising sheaves},
            Invent. Math. {\bf 179}, No. 1, 1--73 (2010).
}
\end{thebibliography}
\end{document}